\definecolor{armygreen}{rgb}{0.29, 0.33, 0.13}
\algrenewcommand\algorithmicrequire{\textbf{Input:}}
\algrenewcommand\algorithmicensure{\textbf{Output:}}
\newcommand{\qedflush}{\hfill \scalebox{0.7}{$\blacksquare$}}
\DeclareMathOperator{\dist}{dist}
\definecolor{algemph}{RGB}{0,76,153}  
\newcommand{\psis}{\psi_\mathrm{s}}
\newcommand{\psin}{\psi_\mathrm{n}}
\newcommand{\SA}{\textproc{FP-ADMM}}
\newcommand{\DA}{\textproc{{VP-ADMM}}}
\newcommand{\DAA}{\textproc{{Adapt-ADMM}}}
\newcommand{\SBIPP}{\textproc{{BIPP}}}
\newcommand{\ABIPP}{\textproc{{A-BIPP}}}
\newcommand{\VA}{\textproc{{vADMM}}}
\newcommand{\ADAPTVA}{\textproc{Adapt-vADMM}}
\newcommand{\CONSTVA}{\textproc{Const-vADMM}}
\newcommand{\PE}{\textproc{{Penalty}}}
\newcommand{\ADAPTPE}{\textproc{Adapt-Penalty}}
\newcommand{\CONSTPE}{\textproc{Const-Penalty}}
\newcommand{\ADAP}{\textproc{{ADAP-FISTA}}}
\newcommand{\rev}[1]{{\textcolor{blue}{#1}}}
\newcommand{\mK}{\mathrm{E}}
\newcommand{\Lc}{{\cal L}_c}
\newcommand{\Il}{{\cal I}_\ell}
\newtheorem{thm}{Theorem}[section]
\newtheorem{theorem}[thm]{Theorem}
\newtheorem{lemma}[thm]{Lemma}
\newtheorem{prop}[thm]{Proposition}
\newtheorem{proposition}[thm]{Proposition}
\newtheorem{definition}[thm]{Definition}
\newcommand{\beq}{\begin{equation}}
\newcommand{\eeq}{\end{equation}}
\newcommand{\beqa}{\begin{eqnarray}}
\newcommand{\eeqa}{\end{eqnarray}}
\newcommand{\beqas}{\begin{eqnarray*}}
\newcommand{\eeqas}{\end{eqnarray*}}
\newcommand{\bi}{\begin{itemize}}
\newcommand{\ei}{\end{itemize}}
\newcommand{\vgap}{\vspace{.1in}}
\newcommand{\Ik}{{\mathcal{I}}_k}
\newcommand{\R}{\mathbb{R}}
\newcommand{\lam}{{\lambda}}
\newcommand{\chione}{\zeta_1}
\newcommand{\chitwo}{\zeta_2}
\newcommand{\chisum}{\zeta_1+c\zeta_2}
\newcommand{\inner}[2]{\langle #1,#2\rangle}
\newcommand{\Inner}[2]{\left \langle #1\,,#2 \right \rangle}
\newcommand{\argmin}{\mathrm{argmin}\,}
\newcommand{\interior}{\mathrm{int}\,}
\newcommand{\dom}{\mathrm{dom}\,}
\newcommand{\bConv}[1]{\overline{\mbox{\rm Conv}}\,(\R^{#1})}
\newcommand{\tx}{\tilde x}
\newcommand{\ty}{\tilde y}
\newcommand{\tT}{\tilde T}
\newcommand{\tu}{\tilde u}
\newcommand{\tq}{\tilde q}
\newcommand{\tv}{\tilde v}
\newcommand{\tp}{\tilde p}
\title{An Adaptive Proximal ADMM for Nonconvex \\ Linearly Constrained Composite Programs 
\thanks{\textbf{Funding}: 
The first and second authors are supported by the NSF grant \#2410328. The third author has been partially supported by AFOSR Grants FA9550-22-1-0088 and FA9550-25-1-0131. The last author has been supported by CNPq Grants 401864/2022-7 and 306593/2022-0
}}
\author{rm88 }
\date{July 12, 2024 \\ (1st revision: June 30, 2025; 2nd revision: Jan 16, 2026; 3rd revision: April 10, 2026)} 
\begin{document}
\author{Leandro Farias Maia\thanks{School of Mechanical, Industrial, and Manufacturing Engineering, Oregon State University,
Corvallis, OR, 97331. \protect\protect\href{mailto:fariasmaia@gmail.com}{fariasmaia@gmail.com}}\hspace*{0.5em} \and David H. Gutman\thanks{Department of Industrial and Systems Engineering, Texas A\&M University,
College Station, TX, 77843.  
\protect\protect\href{mailto:dhgutman@gmail.com}{dhgutman@gmail.com}} \and Renato D.C. Monteiro\thanks{School of Industrial and Systems Engineering, Georgia Institute of
Technology, Atlanta, GA, 30332-0205. 
\protect\protect\href{mailto:monteiro@isye.gatech.edu}{monteiro@isye.gatech.edu}}\and Gilson N. 
Silva\thanks{Department of Mathematics, Federal University of Piauí, Teresina, PI, 64049-550. \protect\protect\href{mailto:monteiro@isye.gatech.edu}{gilson.silva@ufpi.edu.br}}}

\maketitle

\begin{abstract}
   This paper develops an adaptive proximal alternating direction method of multipliers (ADMM) for solving linearly constrained, composite optimization problems under the assumption that the smooth component of the objective is weakly convex, while the non-smooth component is a convex block-separable function with compact domain. 
   The proposed method is adaptive to all problem parameters, including smoothness and weak convexity constants, and allows
   each of its block proximal subproblems to be inexactly solved.
   Each iteration of our adaptive proximal ADMM consists of two steps: 
   the sequential solution of each block proximal subproblem; and adaptive tests to decide whether to perform a full Lagrange multiplier and/or penalty parameter update(s). Without any rank assumptions on the constraint matrices, it is shown that  the adaptive proximal ADMM obtains an approximate first-order stationary point of the constrained problem in 
   a number of iterations that matches the state-of-the-art complexity for the class of proximal ADMM's. 
   The three proof-of-concept numerical experiments that conclude the paper suggest our adaptive proximal ADMM enjoys significant computational benefits.

\vgap

\noindent{\bf Keywords:}
 proximal ADMM, nonseparable, nonconvex composite optimization, iteration-complexity, augmented Lagrangian function 
\end{abstract}

\section{Introduction}
This paper develops an adaptive proximal alternating direction method of multipliers, called {\DAA}, for solving the linearly constrained, smooth, weakly convex, composite optimization problem  
\begin{equation}\label{initial.problem}
    \phi^* = \min_{y\in\R^n} \left\{ \phi(y) := f(y)+h(y)  :   Ay=b \right\},  
\end{equation}
where $A:\R^{n} \to \R^l$ is a linear operator,
$b\in \R^l$ is a vector in the image of $A$, $h$ is a proper closed convex function which is Lipschitz continuous on its  compact domain and, for some positive integer $B$ (the number of blocks) and positive integer vector $(n_1,\ldots,n_t)$ such that $n=\sum_{t=1}^{B} n_{t}$, has the blockwise
representation $h(y)=\sum_{t=1}^{B}h_{t}(y_{t})$
for every $y=(y_{1},\ldots,y_{B})\in\mathbb{R}^{n_{1}}\times\cdots\times\mathbb{R}^{n_{B}}$,
and 
$f$ is a real-valued weakly convex differentiable function on the domain  of $h$ (assumed compact), whose gradient satisfies a blockwise Lipschitz condition. 
Thus, in terms of this blockwise representation,
$f(y)$, $h(y)$, and $Ay$, can be written as
\begin{gather}
\begin{gathered}
f(y)=f(y_{1},\ldots,y_{B}), \quad Ay=\sum_{t=1}^{B}A_{t}y_{t}, \quad h(y)=\sum_{t=1}^{B}h_{t}(y_{t}),
\end{gathered}
\label{eq:block_structure}
\end{gather}
where, for each $t\in\{1,\ldots,B\},$ $h_{t}$
is a proper closed convex function with compact domain and $A_{t}:\R^{n_t} \to \R^l$ is a linear operator. 

The goal in this paper is to find a $(\rho,\eta)$-stationary solution of \eqref{initial.problem}-\eqref{eq:block_structure}, i.e., a quadruple  $(\hat x, \hat p, \hat u,\hat\varepsilon) \in (\dom h) \times A(\mathbb{R}^n) \times \mathbb{R}^l
\times \R_{+}$ satisfying 
\begin{equation}\label{eq:stationarysol}
\hat u \in \nabla f(\hat x) + \partial_{\hat\varepsilon} h(\hat x) + A^*\hat p, \quad \sqrt{\|\hat u\|^2 + \hat\varepsilon}  \leq \rho , \quad \|A\hat x-b\|\leq \eta,
\end{equation}
where $(\rho,\eta)\in \mathbb{R}^2_{++}$ is 
a given tolerance pair and the $\hat \varepsilon$-subdifferential of $h$ is defined in Subsection~\ref{subsec:notation} below.

A popular primal-dual algorithmic framework for solving problem \eqref{initial.problem} that takes advantage of its block structure \eqref{eq:block_structure} is the proximal ADMM, which is based on the augmented Lagrangian (AL) function,
\begin{equation}\label{DP:AL:F}
{\cal L}_{c}(y;p)  :=\phi(y)+\left\langle p,Ay-b\right\rangle +\frac{c}{2}\left\Vert Ay-b\right\Vert ^{2},
\end{equation}
where $c>0$ is a penalty parameter. Given $(\ty^{k-1}, \tilde q^{k-1},c_{k-1})$, the proximal ADMM finds the next triple $(\ty^k,\tilde q^k,c_k)$ as follows. 
Starting from $\ty^{k-1}$,
it first performs $\ell_k$  block inexact proximal
point (\SBIPP) iterations applied to ${\cal L}_{c_{k-1}}(\cdot\,;\,\tilde q^{k-1})$ to obtain $\ty_k$ where $\ell_k$ is a positive integer. 
Next, it performs a Lagrange multiplier update according to 
\begin{equation}
    \tilde q^{k} = (1-\theta)\Big[\tilde q^{k-1}+\chi c_k\left(A\ty^k-b\right)\Big],\label{eq:p_update}
\end{equation}
where $\theta\in [0,1)$  is a dampening parameter and $\chi$ is a positive relaxation parameter, and chooses a scalar $c_k \ge c_{k-1}$ as the next penalty parameter.

We now formally describe how
a proximal ADMM obtains $\tilde y^k$  from $\tilde y^{k-1}$.
It sets $z^0=\tilde y^{k-1}$, and for some positive integer $\ell_k$,
it performs a
{\SBIPP} iteration from $z^{j-1}$ to obtain $z^j$ for every $j=1,\ldots \ell_k$, and finally sets $\tilde  y^k=z^{\ell_k}$.
The {\SBIPP} iteration to obtain $z^j$ from $z^{j-1}$ 
consists of
inexactly solving,
sequentially from $t=1$ to $t=B$,
the $t$-th block  proximal AL subproblem with prox stepsize $\lam_t$ 
\begin{align}\label{eq:x_update}
z_t^{j} \approx \argmin_{u_t\in\R^{n_{t}}}\left\{ \lam_t{\cal L}_{c_{k-1}}(z_{<t}^{j},u_t,z^{j-1}_{>t};\tilde q^{k-1})+\frac{1}{2}\|u_t-z^{j-1}_t\|^{2}\right\} ,
\end{align} 
and finally setting $\ty^k=z^{\ell_k}$.

The recent publication  \cite{KongMonteiro2024} 
proposes a version of 
a proximal ADMM for solving \eqref{initial.problem}-\eqref{eq:block_structure} which assumes that $\ell_k=1$,
$\lam_1=\cdots=\lam_B$, and
$(\chi, \theta) \in (0,1]^2$ satisfies
\begin{equation}\label{eq:assumptio:B}
2\chi B(2-\theta)(1-\theta)\le \theta^2,
\end{equation} 
and hence that $\theta=0$ is not allowed in \cite{KongMonteiro2024}. 


One of the main contributions of \cite{KongMonteiro2024} is that its convergence guarantees do not require \textit{the last block condition}, 
${\rm{Im}}(A_B)\supseteq \{b\}\cup{\rm{Im}}(A_1)\cup\ldots\cup{\rm{Im}}(A_{B-1})$ and $h_B\equiv 0$, 
that hinders many instances of proximal ADMM, see \cite{chao2020inertial,goncalves2017convergence,themelis2020douglas,zhang2020proximal}.
However, the main drawbacks of the proximal ADMM of \cite{KongMonteiro2024} include: 
(i) the strong assumption \eqref{eq:assumptio:B} 
on $(\chi, \theta)$; (ii) subproblem \eqref{eq:x_update}
must be solved exactly;
(iii) the stepsize parameter $\lam$ is conservative and requires the knowledge of $f$'s weak convexity parameter;
(iv) it (conservatively) updates the Lagrange multiplier after each primal update cycle (i.e., $\ell_k=1$);
(v) its iteration-complexity has a high dependence on the number of blocks $B$, namely,
${\cal O}(B^8)$; (vi) its iteration-complexity bound depends linearly on $\theta^{-1}$, and hence grows to infinity as $\theta$ approaches zero.
Paper \cite{KongMonteiro2024} also presents computational results comparing its proximal ADMM with a more practical variant where $(\theta,\chi)$, instead of satisfying  \eqref{eq:assumptio:B}, is set to $(0,1)$. Intriguingly, this $(\theta,\chi)=(0,1)$ regime substantially outperforms the theoretical regime of \eqref{eq:assumptio:B} in the provided computational experiments.  
No convergence analysis for the $(\theta,\chi)=(0,1)$ regime is forwarded in \cite{KongMonteiro2024}. 
Thus, \cite{KongMonteiro2024} leaves open the tantalizing question of whether the convergence of proximal ADMM with $(\theta,\chi)=(0,1)$ can be theoretically secured.\\

\noindent{\bf Contributions:}
This work partially addresses the convergence analysis issue raised above by studying
a {\it completely parameter-free} proximal ADMM, with $(\theta,\chi)=(0,1)$
and $\ell_k$ adaptively chosen, called {\DAA}.  
Rather than making the conservative determination that $\ell_k=1$, the studied adaptive method ensures the dual updates are committed as frequently as possible.
It is shown that {\DAA} finds a $(\rho,\eta)$-stationary solution
in 
${\cal O}(B\max\{\rho^{-3}, \eta^{-3}\})$ iterations. {\DAA} also exhibits the following additional features:

\begin{itemize}
\setlength{\itemsep}{0pt}

    \item 
    Similar to the proximal ADMM of \cite{KongMonteiro2024}, its complexity is established without assuming that the {\it last block condition} holds.

    \item Compared to the 
   ${\cal O}(B^8\max\{\rho^{-3},\eta^{-3}\})$ iteration-complexity of the proximal ADMM of \cite{KongMonteiro2024},
    the one for  {\DAA} vastly  {\it improves the dependence on $B$}.

    \item
    {\DAA}  uses a scheme that adaptively computes {\it variable block prox stepsizes}, instead of constant ones
    whose expressions depend on the weakly convex parameters of $f$ as in the proximal ADMM of \cite{KongMonteiro2024}. Specifically, while the method of \cite{KongMonteiro2024} chooses $\lam_1=\ldots=\lam_B \in (0, 1/(2 \bar{m})]$ where $\bar{m}$ is a weakly convex parameter for $f(y)$ relative to the whole $y$, {\DA} adaptively generates  possibly distinct $\lam_t $'s
    that are larger than  $1/(2m_t)$ (and hence $1/(2\bar m)$) where $m_t$ is the weakly convex parameter of $f$ relative to its $t$-th block $y_t$.
    Thus, {\DAA} allows some of (or all) the subproblems \eqref{eq:x_update} to be non-convex.
    
    \item
    {\DAA} is also adaptive to Lipschitz parameters.

    \item 
    In contrast to the proximal ADMM in \cite{KongMonteiro2024},
    {\DAA} allows the block proximal subproblems \eqref{eq:x_update} to be either exactly or {\it inexactly} solved.
\end{itemize}

\noindent \textbf{Related Works}:  
ADMM methods with $B=1$ are well-known to be equivalent to
augmented Lagrangian methods. Several references have studied augmented Lagrangian and proximal augmented Lagrangian methods in the convex (see e.g., \cite{Aybatpenalty,AybatAugLag,LanRen2013PenMet,LanMonteiroAugLag,ShiqiaMaAugLag16, zhaosongAugLag18,necoara2019complexity,Patrascu2017, YangyangAugLag17}) and nonconvex 
(see e.g. \cite{bertsekas2016nonlinear, birgin2,solodovGlobConv,HongPertAugLag, AIDAL, RJWIPAAL2020, NL-IPAL, kong2023iteration, YinMoreau, ErrorBoundJzhang-ZQLuo2020, zhang2020proximal,sun2021dual}) settings.
Moreover, ADMMs and proximal ADMMs in the convex setting have also been broadly studied in the literature (see e.g. \cite{bertsekas2016nonlinear,boyd2011distributed,eckstein1992douglas,eckstein1998operator,eckstein1994some,eckstein2008family,eckstein2009general,gabay1983chapter,gabay1976dual,glowinski1975approximation,monteiro2013iteration,rockafellar1976augmented,ruszczynski1989augmented}).
So from now on, we
just discuss proximal ADMM variants where $f$ is nonconvex and $B > 1$.

A discussion of the existent literature on nonconvex proximal ADMM is best framed by dividing it into two different corpora: those papers that assume the last block condition and those that do not. Under the \textit{last block condition}, the iteration-complexity established is ${\cal O}(\varepsilon^{-2})$, where $\varepsilon := \min\{\rho,\eta\}$.  Specifically, \cite{chao2020inertial,goncalves2017convergence,themelis2020douglas, wang2019global} 
introduce proximal ADMM approaches assuming $B=2$, while \cite{jia2021incremental, jiang2019structured,melo2017iterationJacobi,melo2017iterationGauss} present (possibly linearized) proximal ADMMs assuming $B\geq 2$. 
A first step towards removing the last block condition was made by \cite{jiang2019structured} which
proposes an ADMM-type method applied to a penalty reformulation of \eqref{initial.problem}-\eqref{eq:block_structure} that artificially satisfies the last block condition. This method possesses an ${\cal O}(\varepsilon^{-6})$ iteration-complexity bound.

On the other hand, development of ADMM-type methods directly applicable to \eqref{initial.problem}-\eqref{eq:block_structure} is considerably more challenging and only a few works addressing this topic have surfaced. In addition to \cite{jiang2019structured}, earlier contributions to this topic were obtained in   \cite{hong2016convergence,sun2021dual,zhang2020proximal}. More specifically,
\cite{hong2016convergence,zhang2020proximal} develop a novel small stepsize ADMM-type method without establishing its complexity. Finally, \cite{sun2021dual} considers an interesting but unorthodox negative stepsize for its Lagrange multiplier update, that sets it outside the ADMM paradigm, and thus justifies its qualified moniker, ``scaled dual descent ADMM''.

\subsection{Structure of the Paper}\label{sub:1:1}

This subsection outlines this article's structure. 
Subsection~\ref{subsec:notation} briefly lays out the basic definitions and notation used throughout. 
Section \ref{subsec:subpro:e} introduces a notion of an inexact solution of {\DAA}'s foundational block proximal subproblem \eqref{eq:x_update} and discusses efficient ways to find said solutions. 

The ADMM variants considered in Sections \ref{adp:ADMM} to \ref{section:VP.ADMM} assume that the weak convexity parameters $m_t$'s are known
and use them to compute their 
prox stepsize,
which is
kept {\it constant} throughout its execution. Specifically, Section~\ref{adp:ADMM} presents a static (i.e., with fixed penalty parameter) 
ADMM variant, {\SA}, and states its main result,
Theorem~\ref{thm:static.complexity},
governing its iteration-complexity. Section~\ref{subsubsec:subroutine} provides the detailed proof of
Theorem~\ref{thm:static.complexity}
and presents all supporting technical lemmas. 
Section~\ref{section:VP.ADMM} presents a non-static (i.e., with variable penalty parameter) ADMM variant, namely {\DA}, and establishes its iteration-complexity in Theorem~\ref{the:dynamic}.

Section~\ref{section:Adaptive.ADMM} presents the centerpiece algorithm of this work, {\DAA}, an {\it adaptive} prox stepsize version of {\DA} that requires no knowledge of the weak convexity parameters $m_t$'s.
 
Section~\ref{sec:numerical} presents proof-of-concept numerical experiments that display the efficiency of {\DAA} for three different problem classes. Section~\ref{sec:concluding} gives some concluding remarks that suggest further research directions. Appendix~\ref{Appendix:tech.lagra.mult} presents some technical results on convexity and linear algebra, while Appendix \ref{sec:acg} describes an adaptive accelerated gradient method and its main properties. Finally, Appendix~\ref{Appendix:inexact.sol} discusses some technical results on the inexact solution notion adopted in this work and its connection to directional derivatives.

\subsection{Notation, Definitions, and Basic Facts }\label{subsec:notation}

This subsection lists the elementary notation deployed throughout the paper. Let $\R$ denote the set of real numbers, and $\R_{+}$  and $\R_{++}$ denote the set of non-negative and positive real numbers, respectively. 
We assume that the $n$-dimensional Euclidean space, $\R^n$, is equipped with an inner product, $\Inner{\cdot}{\cdot}$. 

The norm induced by $\Inner{\cdot}{\cdot}$ is denoted by  $\|\cdot\|$. 
Let $\R^n_{++}$ and $\R^n_{+}$ denote the set of vectors in $\R^n$ with positive and non-negative entries, respectively.
The smallest positive singular value of a  nonzero linear operator $Q:\R^n\to \R^l$ is denoted $\nu^+_Q$ and its operator norm is $\|Q\|:=\sup \{\|Q(w)\|:\|w\|=1\}.$ 
If $S$ is a symmetric and positive definite matrix, the norm induced by $S$ on $\R^n$, denoted by $\|\cdot\|_S$, is defined as $\|\cdot\|_S=\Inner{\cdot}{S(\cdot)}^{1/2}$.
For $x=(x_{1},\ldots,x_{ B}) \in\mathbb{R}^{n_{1}}\times\cdots\times\mathbb{R}^{n_{B}}$, we define the aggregated quantities 
\begin{gather}
\begin{gathered}
x_{<t}:=(x_{1},\ldots,x_{t-1}), \quad  x_{>t}:=(x_{t+1},\ldots,x_{ B}),\quad x_{\leq t}:=(x_{<t},x_{t}),\quad x_{\geq t}:=(x_{t},x_{>t}).
\end{gathered}
\label{eq:intro_agg}
\end{gather}
Moreover, for $a\in \R^B$, we define
\begin{equation}\label{def.lambda.min.max}
\min(a)  = \min_{1\leq t\leq B} a_t  \quad \text{and} \quad \max(a)  = \max_{1\leq t\leq B} a_t .
\end{equation}
For a given closed, convex set $Z \subset \R^n$, we let $\partial Z$ designate its boundary.
The distance of a point $z \in \R^n$ to $Z$, measured in terms of $\|\cdot\|$, is denoted ${\rm dist}(z,Z)$. 
The indicator function of $Z$, denoted by $\delta_Z$, is defined by $\delta_Z(z)=0$ if $z\in Z$, and $\delta_Z(z)=+\infty$ otherwise.

For a given function $g :\R^n\to (-\infty,\infty]$, let $\dom g := \{x\in \R^n : g(x) < +\infty\}$ denotes the effective domain of $g$. 
We say that $g$ is proper if $\dom g \ne \emptyset$. The set of all lower semi-continuous proper convex
functions defined in $\R^n$ is denoted by $\bConv{n}$. 
For $\varepsilon\geq 0$, the $\varepsilon$-subdifferential of $g\in \bConv{n}$ at $z\in \dom g$ is 
\begin{equation}\label{e-subd}
\partial_\varepsilon g(z):=\{w\in \R^n ~:~ g(\tilde z)\geq g(z)+\Inner{w}{\tilde z-z}-\varepsilon, \forall \tilde z\in \R^n\}.
\end{equation}
When $\varepsilon=0$, the $\varepsilon$-subdifferential recovers the classical subdifferential, $\partial g(\cdot):=\partial_0 g(\cdot)$. 
 It is well-known (see \cite[Prop.\ 1.3.1 of Ch.\ XI]{lemarechal1993})
 that for any $\beta>0$ and  $g \in \bConv{n}$,
\begin{equation}\label{prop1:subd}
     \partial_{\varepsilon} (\beta g)(\cdot ) = \beta \partial_{(\varepsilon/\beta)} g(\cdot ).
 \end{equation}
Moreover, 
  if
 $ h_i \in \bConv{n_i}$ for $i=1,\ldots,B$
 and $h(y):=\sum_{t=1}^Bh_t(y_t)$ for any $y=(y_1,\ldots,y_B)$, then
 we have
 (see \cite[Remark 3.1.5 of Ch.\ XI]{lemarechal1993})
\begin{align}
\label{prop2:subd}
\partial_{\varepsilon}h(y)  =\cup \{  \partial_{\varepsilon_1}h_1(y_1) &\times \ldots \times
\partial_{\varepsilon_B}h_B(y_B) : \varepsilon_t\ge 0, \;\; \varepsilon_1+\cdots+\varepsilon_B\le \varepsilon\}.
\end{align}

\section{Assumptions and an Inexact Solution Concept}
\label{subsec:subpro:e}

This section contains two subsections.
The first one details a few mild technical assumptions imposed on the main problem \eqref{initial.problem}-\eqref{eq:block_structure}.
The second one introduces  a notion of an inexact stationary point for the block proximal subproblem \eqref{eq:x_update} along with an efficient method for finding such points.

\subsection{Assumptions for Problem~\eqref{initial.problem}-\eqref{eq:block_structure}}\label{subsec:assump}
The main problem of interest in this paper is problem \eqref{initial.problem} with the block structure as in \eqref{eq:block_structure}.
It is assumed that vector $b\in \mathbb{R}^l$,
linear operator
$A:\mathbb{R}^n\rightarrow \mathbb{R}^l$, and functions $f:\mathbb{R}^{n}\rightarrow (-\infty, \infty]$ and  $h_t:\mathbb{R}^{n_t}\rightarrow (-\infty, \infty]$ for $t=1,\ldots, B$, satisfy the following conditions:

\begin{enumerate}

\item[(A1)] $h(\cdot)$ as in \eqref{eq:block_structure} satisfies the following properties:
\begin{itemize}
    \item for every $t=1,\ldots,B$, $h_t(\cdot)\in \bConv{n_t}$ is prox friendly (i.e., its proximal operator is easily computable) and its domain ${\cal H}_t$ is compact;

    \item there exists $M_h\ge 0$ such that $h(\cdot)$ restricted to ${\cal H}:= {\cal H}_1\times\cdots\times {\cal H}_B$  is $M_h$-Lipschitz continuous;
\end{itemize}




\item[(A2)]
for some $m=(m_1,\ldots,m_B)\in\R^B_{+}$, function $f$ is block $m$-weakly convex, i.e.,  for every $t\in \{1,\ldots,B\}$,
\[
f(x_{<t},\cdot,x_{>t}) + \delta_{{\cal H}_t}(\cdot) + \frac{m_{t}}{2}\|\cdot\|^{2}\text{ is convex for all } x\in{\cal H};
\]
    \item[(A3)] $f$ is differentiable on ${\cal H}$ and, for every $t\in \{1,\ldots,B-1\}$, there exists $L_{>t}\ge 0$ such that
\begin{gather}
\begin{aligned}
& \|\nabla_{t}f(x_{\le t},\tilde{x}_{>t})-\nabla_{t}f(x_{\le t},{x}_{>t})\|
\leq (L_{>t})\|\tilde{x}_{>t}-x_{>t}\| \qquad  \forall \, x,\tilde{x}\in{\cal H},
\end{aligned}
\label{eq:lipschitz_x}
\end{gather}
where $\nabla_{t} f(\cdot) $ denotes the $t$-th block component of the whole gradient $\nabla f(\cdot) $;

    \item[(A4)] $A$ is nonzero and there exists $\bar{\mathrm{x}}\in \{x\in {\cal H}:Ax=b\}\neq \emptyset$ such that $\bar{d}:=\text{dist}(\bar{\mathrm{x}}, \partial {\cal H})>0$.
    
\end{enumerate}

We now make some remarks about the above assumptions.
First, since ${\cal H}$ is compact by (A1), the scalars
\begin{equation}\label{def:damH}
       D_h:=\sup_{z \in {\cal H}} \|z-\bar{x}\|, \quad \widehat \nabla_f:=\sup_{u \in {\cal H}} \| \nabla f(u)\|, \quad \underline \phi := \inf_{u \in {\cal H}} \phi(u), \quad \overline \phi := \sup_{u \in {\cal H}}  \phi(u),  
\end{equation}
are bounded. 
Second, (A2) allows $m_t$ to be zero for some or all $t \in \{1,\ldots,B\}$.
Finally, it follows from (A4) that
$\mathrm{\bar x}$ is a Slater point for \eqref{initial.problem}
in the strong sense that $\mathrm{\bar x} \in \interior( {\cal H})$ and $A\mathrm{\bar x}=b$.


\subsection{An Inexact Solution Concept for \eqref{eq:x_update}}\label{subsec:inexact-solution}

This subsection introduces our notion (Definition \ref{def:BB}) of an inexact solution of the block proximal AL subproblem \eqref{eq:x_update}. To cleanly frame this solution concept, observe that \eqref{eq:x_update} can be cast in the form 
\beq \label{ISO:problem}
\min \{\psi(z):=\psis(z)+\psin(z) : z\in\R^n\},
\eeq
where
\begin{equation}\label{def:ps}
\psis(\cdot) =\lam_t \hat {\cal L}_{c}(y_{<t}^{i},\cdot,y_{>t}^{i-1};\tilde q^{k-1})+\frac{1}{2}\|\cdot-y_{t}^{i-1}\|^{2}, \quad \psin(\cdot) = \lam_t h_t(\cdot),
\end{equation}
and, for every $p \in \mathbb{R}^l$,  ${\cal \hat L}_{c}(\cdot\,;p)$ denotes the smooth part of \eqref{DP:AL:F}, i.e.,
\begin{align}\label{def:smooth:ALM}
{\cal \hat L}_{c}(y;p):= f(y)+\left\langle p,Ay-b\right\rangle +\frac{c}{2}\left\Vert Ay-b\right\Vert ^{2}.
\end{align}
Hence, to describe a notion of an inexact solution for \eqref{eq:x_update}, it \rev{suffices} to do so in the  context of \eqref{ISO:problem}. Assume that:
\begin{itemize}
    \item[(B1)] $\psis:\R^n \to \R$ is a differentiable function;
    \item[(B2)] $\psin\in \bConv{n}$.
\end{itemize}

\begin{definition}\label{def:BB}
For a given $z^0 \in \dom \psin$ and parameter
$\sigma \in \R_+$,
a triple $(\bar z, \bar r, \bar \varepsilon)\in \R^n\times\R^n\times\R_{+}$ satisfying
\begin{align}
\bar r \in \nabla \psis(\bar z) + \partial_{\bar\varepsilon} \psin(\bar z) \quad \text{and}\quad
\|\bar r\|^2 + 2 \bar\varepsilon \le 
\sigma \|z^0-\bar z\|^2 \label{ISO:Cond:1&2}  
\end{align}
is called a
$(\sigma;z^0)$-relative stationary solution of \eqref{ISO:problem} with composite term $\psin$.
\end{definition}

We now make some remarks about  Definition \ref{def:BB}. First,  Appendix~\ref{Appendix:inexact.sol} (see Proposition~\ref{Appendix.Prop.C.main} and the remark after it)
discusses how a triple $(\bar z, \bar r, \bar \varepsilon)$ satisfying Definition~\ref{def:BB} yields a point close to $\bar z$ with nearly nonnegative 
directional derivatives along all unit directions.
Second, if $\sigma=0$, then the inequality in \eqref{ISO:Cond:1&2} implies that $(\bar r,\bar\varepsilon)=(0,0)$, and hence the inclusion in \eqref{ISO:Cond:1&2} implies that $\bar z$ is an exact stationary point of \eqref{ISO:problem}, i.e., it satisfies $0\in \nabla \psis(\bar z) + \partial \psin(\bar z)$, or equivalently, the condition that
the directional derivative of 
$\psis+\psin$ at $\bar z$ satisfies $(\psis+\psin)'(\bar z;d) \ge  0$ for every $d \in \R^n$ (see Lemma~\ref{lemma:C:2} in the Appendix). 
Thus, if the triple $(\bar z, \bar r, \bar\varepsilon)$ is a $(\sigma; z^0)$-relative  stationary solution of \eqref{ISO:problem}, then $\bar z$ can be viewed as an approximate stationary solution of \eqref{ISO:problem} where the residual pair $(\bar r, \bar\varepsilon)$
is bounded according to \eqref{ISO:Cond:1&2} (instead of being zero as in the exact case). 
Third, if $\bar z$ is an exact stationary point of \eqref{ISO:problem}
(e.g., $\bar z$ is an
exact solution of
\eqref{ISO:problem}), then the triple $(\bar z,0,0)$ is a $(\sigma; z^0)$-relative  stationary point of \eqref{ISO:problem} for any $\sigma \in \R_+$. 

In general, an exact solution or relative stationary point of \eqref{ISO:problem} is not easy to compute. In such a case,
Proposition 3.5 of \cite{HeMonteiro2015} (see also Subsection 2.3 in \cite{fistaReport2021} and Appendix A in \cite{sujanani2023adaptive})
establishes the iteration-complexity of a variant of the original Nesterov's accelerated gradient method  \cite{Nes83} (see also \cite{nesterov2013gradient} and \cite[Chapter 2]{nesterov1983method})
to find such an approximate solution under the assumptions that either $\psis$ is convex or is strongly convex.

Appendix \ref{sec:acg}
describes the {\ADAP} method of \cite{sujanani2023adaptive} and its main properties (see Proposition~\ref{pro.inexact.sol} in Appendix~\ref{sec:acg}).
The main reason for focusing on this method instead of the ones above are:
i) in addition to being able to find an approximate solution as in Definition~\ref{def:BB} when $\psis$ is strongly convex
and its gradient is Lipschitz continuous,
ADAP-FISTA
is also applicable to instances where $\psis$ is weakly convex (and hence possibly nonconvex),
and;
ii)
ADAP-FISTA provides a key
and easy to check inequality whose validity at every iteration guarantees its successful termination.
These two properties of ADAP-FISTA play an important role in the development of adaptive ADMMs with variable prox stepsizes for solving nonconvex instances of \eqref{initial.problem}
(see Section \ref{section:Adaptive.ADMM}).
Finally,
ADAP-FISTA
shares similar features with other accelerated gradient methods (e.g., see \cite{kong2024complexity,fistaReport2021,florea2018accelerated,nesterov1983method,Monteiro2016,nesterov2013gradient,HeMonteiro2015,YHe2}) in that:
it has similar complexity guarantees
regardless of whether it succeeds or fails (e.g., see \cite{kong2024complexity,kong2020efficient,paquette2018catalyst});
it successfully
terminates when
$\psis$ is $\mu$-strongly convex;
and it
performs a line search for estimating a local Lipschitz constant for
the gradient of $\psis$
(e.g., see \cite{florea2018accelerated}).

As mentioned in Subsection \ref{sub:1:1},  Sections \ref{adp:ADMM} to \ref{section:VP.ADMM} assume that the weak convexity parameters $m_t$'s are known
and present ADMM variants  which compute their constant prox stepsize
in terms of the $m_t$'s.

\section{A Fixed Penalty ADMM}\label{adp:ADMM}

This section contains two subsections. The first one describes an important component of an ADMM method, namely, a  subroutine  for performing a block inexact proximal point
(\SBIPP) iteration within it, as mentioned in the paragraph containing \eqref{DP:AL:F} and \eqref{eq:p_update}. The second one presents {\SA}, an  ADMM variant which, in addition to keeping its prox stepsize constant, also keeps its penalty parameter fixed.

\subsection{The {\SBIPP} Subroutine} \label{sub:B-IPP}

This subsection is to state the {\SBIPP} subroutine, its main properties, and relevant remarks about it. 

We start by describing the subroutine.

{\floatname{algorithm}{Subroutine}
\renewcommand{\thealgorithm}{}
\begin{algorithm}[H]
\setstretch{1}
\caption{{\SBIPP}}\label{algo:BlockIPP}
\begin{algorithmic}[1]
\Statex \hskip-1.8em
\Require $(z,p,\lambda,c)
\in {\cal H} \times A(\mathbb{R}^n) 
\times \R^B_{++}\times \R_{++} $
\Ensure $(z^+,v^+,\delta_+)\in {\cal H} \times \mathbb{R}^l 
\times \R_{++}$ 
\vspace{1em}
\\ 
\textcolor{algemph}{STEP 1: Block-IPP Iteration}

  \For{$t=1,\ldots, B$}\label{subroutine:loop.inexact.sol}
    
    \State compute a $\left(1/8;z_t\right)$-relative  stationary solution $(z_t^+,r_t^+,\varepsilon_t^{+})$
         of \label{algo:BIPP.inexact.subp}
    \begin{equation}\label{algo:BIPP.inexact.subproblem}
    \min_{u \in \R^{n_t}} \left  \{ \lam_t \hat {\cal L}_{c}(z_{<t}^+,u,z_{>t};p)+\frac{1}{2}\|u-z_{t}\|^{2}+\lam_t h_t(u) \right\} 
    \end{equation}
    \hspace{1.3em} with composite term $\lam_t h_t(\cdot)$ (see Definition \ref{def:BB})

  \EndFor

  \State $z^+\gets (z_1^+,\ldots ,z_B^+)$\label{subroutine.primal}

\vspace{1em}     
    \\
    \textcolor{algemph}{STEP 2: Computation of the residual pair $(v^+,\delta_+)$ for $(z^+,p)$}

    \For{$t=1,\ldots ,B$}
    \State $ v_t^+\gets \nabla_t f(z_{< t}^+,z_t^+, z_{>t}^+)- \nabla_t f(z_{< t}^+,z_t^+, z_{>t}) + \dfrac{r_t^+}{\lambda_t}+ cA_{t}^{*}\sum_{s=t+1}^{ B}A_{s} (z^+_{s}-z_s) -\dfrac{1}{\lambda_t}( z^+_{t}-z_t)$ \label{algo.stat.vit}\;
    \EndFor
    \State $v^+\gets (v_1^+,\ldots,v_B^+)$ and $\delta_+\gets (\varepsilon_1^+/\lam_1) + \ldots +  (\varepsilon_B^+/\lam_B)$ \label{algo:IPP.v.delta}
    \vspace{1em}
    \State \Return $(z^+,v^+,\delta_+)$
   
\end{algorithmic}
\end{algorithm}
\renewcommand{\thealgorithm}{\arabic{algorithm}}
}

We now clarify some aspects of {\SBIPP}.
First, line~\ref{algo:BIPP.inexact.subp} requires a subroutine to find an approximate solution of \eqref{algo:BIPP.inexact.subproblem} as in Definition \ref{def:BB}. A detailed discussion giving examples of such subroutine will be  given  in the second paragraph after Proposition~\ref{main:subroutine:1}. Second, Proposition~\ref{main:subroutine:1} shows that the iterate $z^+$ and the residual pair $(v^+,\delta_+)$ computed in
lines \ref{subroutine.primal} and \ref{algo:IPP.v.delta}, respectively, satisfy the approximate stationary inclusion $v^+ \in \nabla f(z^+) + \partial_{\delta_+} h(z^+) + \textrm{Im}(A^*)$. Hence, upon termination of {\SBIPP}, its output satisfies the first condition in \eqref{eq:stationarysol} (though it may not necessarily fulfill the remaining two) and
establishes an important bound on
the residual pair $(v^+,\delta_+)$ in terms of a Lagrangian function variation
that will be used later to determine a suitable potential function.

We now make some remarks about the prox stepsizes. First, the prox stepsizes $\lam$ \textcolor{blue}{are kept} constant during the whole execution of {\SBIPP}. Hence, ADMMs that repeatedly \rev{invoke} {\SBIPP} yield constant stepsize ADMM variants. Section~\ref{section:Adaptive.ADMM} describes adaptive prox stepsize ADMM variants that repeatedly invoke a version of {\SBIPP} that adaptively chooses the initial stepsizes $\lam$, and outputs $\lam^+$ possibly different from $\lam$.

The quantities 
\begin{gather}
\begin{gathered}
\label{eq:sigma1&2}
\chione := 100 \max
\left\{1 \, , \, \max_{1\leq t\leq B}m_t \right\} +24L^2+1, \qquad \chitwo:=24(B-1)\|A\|^2_{\dagger},
\end{gathered}
\end{gather}
where
\begin{equation}\label{eq:block_norm}
L :=\sqrt{\sum_{t=1}^{B-1} (L_{>t})^2} , \qquad \|A\|_\dagger := \sqrt{\sum_{t=1}^B \|A_t\|^2},
\end{equation}
with scalars $L_{>t}$'s
as in \eqref{eq:lipschitz_x} and
submatrices $A_t$'s as in \eqref{eq:block_structure},
are used in the following statement of the main result of this subsection.

\begin{proposition}\label{main:subroutine:1}
Assume that $(z^+,v^+,\delta_+)={\SBIPP}(z,p,\lam, c)$ for some $(z,p,\lam, c)
\in {\cal H} \times A(\mathbb{R}^n) 
\times \R_{++}^B\times \R_{++}$
and the prox stepsize $\lam \in \R^B_{++}$ input to {\SBIPP} is chosen as
\begin{equation}\label{choo:lamt}
\lam_t = \frac1{2\max\{m_t,1\}}\quad \forall t \in \{1,\ldots,B\}.
\end{equation}
Then, the following statements hold:
\begin{itemize}
    \item[(a)] for any $t\in\{1,\ldots,B\}$, the smooth part of the objective function of the $t$-th block subproblem~\eqref{algo:BIPP.inexact.subproblem} is $(1/2)$-strongly convex;
    \item[(b)] it holds that
 \begin{align}
 v^+  \in \nabla f (z^+)+ \partial_{\delta_+} h(z^+) +
 A^*[p+c(Az^+-b)], \label{prop.B-IPP.inclusion} \\
 \|v^+\|^2 +\delta_+  
\le (\zeta_1 + c \zeta_2)
  \Big[ {\cal L}_c(z;p) - {\cal L}_c(z^+;p)\Big], \label{lemma:norm:residual:Ine}
\end{align}
where $\chione$ and  $\chitwo$ are as in  \eqref{eq:sigma1&2}.
\end{itemize}
\end{proposition}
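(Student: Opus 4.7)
For the inclusion \eqref{prop.B-IPP.inclusion}, I first unpack the $(1/8;z_t)$-stationarity condition for the $t$-th block subproblem. With $(\psis, \psin)$ as in \eqref{def:ps}, this condition reads $r_t^+ \in \nabla \psis(z_t^+) + \partial_{\varepsilon_t^+} \psin(z_t^+)$. Differentiating $\psis$ explicitly and pulling $\lambda_t$ out of the $\varepsilon$-subdifferential via \eqref{prop1:subd} yields
\begin{equation*}
\tfrac{1}{\lambda_t}\bigl[r_t^+ - (z_t^+ - z_t)\bigr] - \nabla_t f(z_{<t}^+, z_t^+, z_{>t}) - A_t^*\bigl[p + c\bigl(A(z_{<t}^+, z_t^+, z_{>t}) - b\bigr)\bigr] \in \partial_{\varepsilon_t^+/\lambda_t} h_t(z_t^+).
\end{equation*}
Substituting $A(z_{<t}^+, z_t^+, z_{>t}) = Az^+ - \sum_{s>t} A_s(z_s^+-z_s)$ and consolidating the left-hand side via the line~\ref{algo.stat.vit} definition of $v_t^+$ transforms the inclusion into $v_t^+ - \nabla_t f(z^+) - A_t^*[p+c(Az^+-b)] \in \partial_{\varepsilon_t^+/\lambda_t}h_t(z_t^+)$. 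Summing over $t$ and applying \eqref{prop2:subd} with $\delta_+ = \sum_t \varepsilon_t^+/\lambda_t$ recovers \eqref{prop.B-IPP.inclusion}.

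For part (a), and the refined strong-convexity estimate that underpins part (b), I decompose the smooth objective of \eqref{algo:BIPP.inexact.subproblem} into: the $\lambda_t m_t$-weakly convex piece $\lambda_t f(z_{<t}^+, \cdot, z_{>t})$ (by (A3)); the linear term $\lambda_t \langle p, A_t\,\cdot\rangle$; the convex quadratic $\tfrac{\lambda_t c}{2}\|A(z_{<t}^+, \cdot, z_{>t}) - b\|^2$, which is Bregman-exact with modulus $\lambda_t c A_t^* A_t$ in the $A_t$-direction; and the $1$-strongly convex prox term $\tfrac{1}{2}\|\cdot - z_t\|^2$. Summing yields the two-point inequality
\begin{equation*}
\psis(u_1) - \psis(u_0) - \langle \nabla\psis(u_0), u_1 - u_0\rangle \ge \tfrac{1-\lambda_t m_t}{2}\|u_1 - u_0\|^2 + \tfrac{\lambda_t c}{2}\|A_t(u_1-u_0)\|^2.
\end{equation*}
Under \eqref{choo:lamt}, $\lambda_t m_t \le 1/2$, so the coefficient $\tfrac{1-\lambda_t m_t}{2} \ge \tfrac{1}{4}$ and part (a) follows. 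Applying this inequality at $(u_1, u_0) = (z_t, z_t^+)$, combining with $r_t^+ - \nabla\psis(z_t^+) \in \partial_{\varepsilon_t^+}\psin(z_t^+)$, using the stationarity bound $\|r_t^+\|^2 + 2\varepsilon_t^+ \le \tfrac{1}{8}\|z_t^+-z_t\|^2$ together with Cauchy--Schwarz to absorb $\langle r_t^+, z_t-z_t^+\rangle$, and invoking the identity $\psi(z_t) - \psi(z_t^+) = \lambda_t[{\cal L}_c(z_{<t}^+, z_t, z_{>t};p) - {\cal L}_c(z_{<t}^+, z_t^+, z_{>t};p)] - \tfrac{1}{2}\|z_t^+-z_t\|^2$, I obtain the per-block descent
\begin{equation*}
{\cal L}_c(z_{<t}^+, z_t, z_{>t}; p) - {\cal L}_c(z_{<t}^+, z_t^+, z_{>t}; p) \ge \frac{c_0}{\lambda_t}\|z_t^+-z_t\|^2 + \frac{c}{2}\|A_t(z_t^+-z_t)\|^2,
\end{equation*}
with $c_0 := \tfrac{3}{4} - \tfrac{1}{\sqrt{8}} - \tfrac{1}{16} > 0$. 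Telescoping over $t=1,\ldots,B$ produces the aggregate descent carrying both an isotropic piece $c_0 \sum_t (1/\lambda_t)\|z_t^+-z_t\|^2$ and a $c$-weighted directional piece $\tfrac{c}{2}\sum_t \|A_t(z_t^+-z_t)\|^2$.

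The main obstacle is then controlling $\|v^+\|^2 + \delta_+$ by a linear-in-$c$ multiple of this descent. Expanding $v_t^+$ via line~\ref{algo.stat.vit} and using $\|a+b+c+d\|^2 \le 4(\|a\|^2+\|b\|^2+\|c\|^2+\|d\|^2)$, the three $c$-independent summands ($\nabla_t f(z^+) - \nabla_t f(z_{<t}^+, z_t^+, z_{>t})$, $r_t^+/\lambda_t$, and $(z_t^+-z_t)/\lambda_t$) are handled routinely: their squared-norm contributions are $L_{>t}^2\|z_{>t}^+-z_{>t}\|^2$, $\|r_t^+\|^2/\lambda_t^2 \le (8\lambda_t^2)^{-1}\|z_t^+-z_t\|^2$, and $\|z_t^+-z_t\|^2/\lambda_t^2$, which reorganize (via $\sum_t L_{>t}^2 = L^2$, $1/\lambda_t = 2\max\{m_t,1\}$, and $\lambda_t \le 1/2$) into constant multiples of $\sum_t(1/\lambda_t)\|z_t^+-z_t\|^2$ absorbed by the isotropic piece of the descent with a $\zeta_1$-type coefficient. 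The delicate term is the cross-term $c A_t^* \sum_{s>t} A_s(z_s^+-z_s)$, whose squared norm is a priori $O(c^2)$---incompatible with a linear-in-$c$ bound. The resolution is to estimate
\begin{equation*}
\Big\|c A_t^* \sum_{s>t} A_s(z_s^+-z_s)\Big\|^2 \le c^2 \|A_t\|^2 (B-1) \sum_{s>t}\|A_s(z_s^+-z_s)\|^2,
\end{equation*}
sum over $t$ to obtain $c^2(B-1)\|A\|_\dagger^2 \sum_s \|A_s(z_s^+-z_s)\|^2$, and then leverage the $c$-weighted piece of the refined descent, namely $c\sum_s\|A_s(z_s^+-z_s)\|^2 \le 2[{\cal L}_c(z;p)-{\cal L}_c(z^+;p)]$, to reduce one power of $c$ and yield a $c\zeta_2$-type coefficient. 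Finally, $\delta_+$ is controlled by $\varepsilon_t^+/\lambda_t \le (16\lambda_t)^{-1}\|z_t^+-z_t\|^2$. Assembling all bounds, with constants $(\zeta_1, \zeta_2)$ as in \eqref{eq:sigma1&2}, yields \eqref{lemma:norm:residual:Ine}.
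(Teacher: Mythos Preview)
Your proof is correct and follows essentially the same approach as the paper: the inclusion via unwrapping the block stationarity conditions and \eqref{prop2:subd}, the per-block Lagrangian descent from strong convexity of $\psis$ combined with the residual bound, and---most importantly---the reduction of the apparently $O(c^2)$ cross-term $\|cA_t^*\sum_{s>t}A_s(z_s^+-z_s)\|^2$ to $O(c)$ by absorbing one factor of $c$ into the $c$-weighted directional piece $\sum_s\|A_s(z_s^+-z_s)\|^2$ of the descent. The paper organizes matters slightly differently---it defers the proof to Section~\ref{subsection:ABIPP}, establishes the general Proposition~\ref{prop:ABIPP.output} for the adaptive subroutine, specializes via Lemma~\ref{Lemma:ABIPP}(d), and derives the per-block descent through the technical $\varepsilon$-subdifferential Lemma~\ref{conv:result} (yielding constants $(1/(8\lam_t),c/4)$) rather than your direct Cauchy--Schwarz argument (yielding the sharper $(c_0/\lam_t,c/2)$)---but the substance is the same and your constants comfortably fit under the paper's $(\zeta_1,\zeta_2)$.
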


We postpone the proof of Proposition \ref{main:subroutine:1} to the end  of Subsection \ref{subsection:ABIPP} and, for now,
only make some remarks about it.
First, the inclusion~\eqref{prop.B-IPP.inclusion} shows that $(v^+, \delta_+)$ is a residual pair for the point $z^+$. Second, the inequality in \eqref{lemma:norm:residual:Ine} provides a bound on the magnitude of the residual pair $(v^+, \delta_+)$ in terms of a variation of the Lagrangian function
which, in the analysis of the next section, will play the role of a potential function.


For any given $t \in \{1,\ldots,B\}$, we now comment on the possible ways
to obtain a $(1/8;z_t)$-relative stationary solution of \eqref{algo:BIPP.inexact.subproblem}
as required in
line~\ref{algo:BIPP.inexact.subp} of {\SBIPP}.
As already observed in the paragraph following Definition \ref{def:BB},
if an exact solution $z^+_t$ of \eqref{algo:BIPP.inexact.subproblem}
can be computed in closed form, then
$(z_t^+,v_t^+,\varepsilon_t^+)=(z^+_t,0,0)$ is a $(1/8;z_t)$-relative  stationary solution of \eqref{algo:BIPP.inexact.subproblem}.
Another approach is to use {\ADAP} described in Appendix \ref{sec:acg}.
Specifically, assume that $\nabla_{t} f (x_1,\ldots,x_B)$ exists for every $x=(x_1,\ldots,x_B) \in {\cal H}$ and is
$\tilde L_{t}$-Lipschitz continuous with respect to the $t$-th block $x_t$. Using this assumption and Proposition~\ref{main:subroutine:1}(a),
it follows from statement (c) of Proposition~\ref{pro.inexact.sol} in Appendix \ref{sec:acg} with 
$\tilde M =1+\lam_t(\tilde L_t + c \|A_t\|^2 )
$ that {\ADAP} with input  $(\sigma,z^0,M_0,\mu_0)=(1/\sqrt{8},z_t,\lam_tc\|A_t\|^2,1/2)$ obtains a $(1/8;z_t)$-relative  stationary solution of \eqref{algo:BIPP.inexact.subproblem}. Moreover, 
since $M_0 \le \tilde M$,  $\mu_0=1/2$, and $\lam_t \le 1/2$ for every $t \in \{1,\ldots,B\}$,
Proposition~\ref{pro.inexact.sol}(a) ensures that the number of iterations performed by {\ADAP} to obtain such a relative stationary solution is bounded (up to logarithmic terms) by
${\cal O} ( [\tilde L_t + c \|A_t\|^2 ]^{1/2})$.
Even though  this  iteration-complexity bound  is expressed in terms of $\tilde L_t$,  {ADAP-FISTA} itself does not require
$\tilde L_t$.


Finally, as already observed before, {\SBIPP} is a key component that is invoked once in every iteration of the non-adaptive ADMM presented in subsequent subsection. The complexity bounds 
for this ADMM will be given in terms of ADMM iterations (and hence {\SBIPP} calls)
and will not take into account the complexities of implementing line~\ref{algo:BIPP.inexact.subp}.
The main reason for doing so is the possible different ways of solving the block subproblems (e.g., in closed form, or using an ACG variant, or some other convex optimization solver).
Nevertheless, the discussion in the previous paragraph provides ways of estimating the contribution of each block to the overall algorithmic effort.

\subsection{ Description of {\SA} }\label{subsection:SA-ADMM}

This subsection presents {\SA}, an  ADMM variant which, in addition to keeping its penalty parameter constant, also works with a prox stepsize parameter
 determined by the weak convexity parameters $m_t$'s.
 
{\SA} is shown to obtain a quadruple $(\hat{y}, \hat{q}, \hat{v}, \hat{\delta})$ satisfying \eqref{eq:stationarysol}
 only under the condition that its (constant) penalty parameter is larger than a threshold value, which is usually unknown or difficult to estimate.
 Due to this difficulty and other ones discussed in the first two remarks after Theorem 3.2,
 a single call to  {\SA} as a means to find a quadruple as in \eqref{eq:stationarysol}
 is not recommended.
 Instead, the variable penalty ADMM of Section \ref{section:VP.ADMM} is shown to obtain such a quadruple by repeatedly doubling $c$ and warm-starting   
 {\SA}. It is also shown that the iteration-complexity of VP-ADMM is better than that of FP-ADMM under the assumption that its constant penalty parameter is (somehow) suitably chosen.

{\SA} is formally stated next.

\setcounter{algorithm}{0}
\begin{algorithm}[H]
\caption{{\SA} ($m=(m_1,\ldots,m_B)$ is required)}\label{alg:static}
\begin{algorithmic}[1]
\Statex \hskip-1.8em
\textbf{Universal Input:}  $\rho>0$, \rev{$\alpha> 0$}, \rev{$C \geq \rho$}, and $m=(m_1,\ldots,m_B)\in\R^B_+$
\Require $(y^0,q^0, c)
\in {\cal H} \times A(\mathbb{R}^n) 
\times \R_{++}$
\Ensure $(\hat y, \hat q, \hat v, \hat \delta)$ that satisfies the conclusions of (a) and (b) of Theorem~\ref{thm:static.complexity}.
\vspace{1em}

\State $T_0\gets 0$, $k\gets 0$ \label{def:T0}  

\State $\lam_t \gets \dfrac{1}{2\max\{m_t,1\}}$ for $t=1,\ldots ,B$\label{choose:lamt}

\For{$i\gets 1,2,\ldots$} \label{star:cycle}
        \State $(y^i,v^i,\delta_i)= \SBIPP(y^{i-1},q^{i-1},\lam,c)$\label{call:SBIPP}
        \If{$\|v^i\|^2 +\delta_i  \le \rho^2$}\label{cond:vi:stop} \Comment{termination \textcolor{blue}{criterion}}
            \State $k\gets k+1$, \ $i_k\gets i$ \Comment{end of the final epoch}\label{algo:j_k&k.update:1}
            \State  $q^i=  q^{i-1} + c (A y^i - b) $ \label{algo.sa.lastLagrangeMultiplier} 
            \State \Return $(\hat y, \hat q, \hat v, \hat \delta)=(y^i, q^i, v^i, \delta_i)$\label{algo.sa.return}
        \EndIf

\vspace{1em}

    \State $T_i= {\cal L}_c( y^{i-1};q^{i-1}) - {\cal L}_c(y^i;q^{i-1}) + T_{i-1}$\label{def:T:i}
    
    \If{$\|v^i\|^2 +\delta_i\le C^2$ and $\dfrac{\rho^2}{\alpha(k+1)} \ge \dfrac{T_i}{i}  $}\label{stat:begin.test.cond}
        \State $k\gets k+1$, \ $i_k\gets i$\Comment{end of epoch $\Ik$} \label{algo:j_k&k.update}
        \State  $q^{i} =  q^{i-1} + c (A y^i - b) $  \label{def:qi:chi}
 \label{stat:test.i.qi.ti.updates} \Comment{Lagrange multiplier update}

    \Else
        \State $q^i= q^{i-1}$ \label{sa.qi.update}
   
    \EndIf
\EndFor
\end{algorithmic}
\end{algorithm}

We now explain some details about {\SA}. The iteration index $i$ counts the number of iterations
of {\SA}.
Index $k$ counts the number of
Lagrange multiplier updates performed by  {\SA}.
The index $i_k$ computed either in lines \ref{algo:j_k&k.update:1} or \ref{algo:j_k&k.update} of {\SA} is the iteration index where the $k$-th Lagrange multiplier occurs. It is shown in Theorem~\ref{thm:static.complexity}(a) that the total number of iterations performed by {\SA} is finite, and hence that the index $i_k$ is well-defined.
If the inequality in line~\ref{cond:vi:stop} is satisfied, {\SA} performs the last Lagrange multiplier update and stops in line~\ref{algo.sa.return}. Otherwise, depending on
the test in line~\ref{stat:begin.test.cond}, {\SA}
either performs a Lagrange multiplier in line \ref{def:qi:chi}
or leaves it unchanged in line~\ref{sa.qi.update},
and in both cases moves on to the next iteration.

We now comment on the instance parameters required by {\SA}.
First, it requires the weakly convex parameters $\{m_t\}_{t=1}^B$ (see its universal input), as they are used to compute the 
proximal stepsizes $\{\lambda_t\}_{t=1}^B$ in line \ref{choose:lamt}, which \textcolor{blue}{remain} constant throughout its execution. However, {\SA}  requires none of the
Lipschitz constant $M_h$ as in assumption (A1), the Lipschitz constants $\{L_{>t}\}_{t=1}^{B-1}$ as in assumption (A3), or the Lipschitz constants $\{\tilde L_t\}_{t=1}^B$ mentioned in the second last paragraph of Subsection~\ref{sub:B-IPP} (see the discussion therein). Subsection \ref{Adaptive prox stepsize ADMM}
presents a fully adaptive  ADMM variant
(i.e., one that  requires none of the above parameters)
which, instead of choosing
the prox stepsizes in a constant manner as in line \ref{choose:lamt} of {\SA}, selects them adaptively.

We next define a few concepts that will be used in the discussion and analysis of {\SA}.
For every $k \ge 1$, define the $k$-th epoch $\Ik$
as the index set 
\begin{equation}
\label{eq:cy}
{\cal I}_k := \{i_{k-1}+1, \ldots, i_k\},
\end{equation}
with the convention that $i_0=0$.
Moreover, let
\begin{equation}\label{def.variablesTilde}
   (\ty^k,\tq^k,\tilde\lam^k, \tT_k):=(y^{i_k},q^{i_k},\lam^{i_k}, T_{i_k}) \quad \forall k\geq 0,
\end{equation}
and
\begin{equation}\label{def.variablesTilde2}
  (\tv^k,\tilde \delta_k):=(v^{i_k},\delta_{i_k}) \quad \forall k\ge 1.
\end{equation}

We now make two additional remarks  about the logic of {\SA} regarding the prox stepsize and the Lagrange multiplier. First, due to the definition of $i_k$, it follows that $q^i=q^{i-1}$ for every $i \in \{i_{k-1}+1,\ldots, i_k-1\}=\Ik\setminus \{i_k\}$, which implies that 
\begin{equation}\label{eq:q.constant}
    q^{i-1}=q^{i_{k-1}}=\tq^{k-1} \ \ \forall i\in \Ik.
\end{equation}
Moreover, \eqref{def.variablesTilde} and \eqref{eq:q.constant} with $i=i_k$
imply that 
\begin{equation}\label{eq:q.update}
    \tq^{k} = q^{i_k}= q^{i_k-1} + c(Ay^{i_k}-b)=\tq^{k-1} +  c(A\ty^k-b) \quad \forall k\geq 1.
\end{equation}
Second, $\tilde q^0 \in \text{Im} (A)$ due to  the facts that  $\tilde q^0=q^0$
by \eqref{def.variablesTilde} with $k=0$,
and the input $q^0$ of  {\SA} is in $\text{Im}(A)$. 
This observation, 
the fact that $b \in \text{Im}(A)$ by (A4), identity \eqref{eq:q.update}, and
a simple induction argument, then imply that
\begin{equation}\label{eq:q.in.Img.A}
   \tilde q^k \in A(\R^n) \quad \forall  k \ge 0, 
\end{equation}
and hence that
\begin{equation}\label{eq:q.in.Img.A.2}
   q^i \in A(\R^n) \quad \forall  i \ge 0,
\end{equation}
in view of~\eqref{eq:q.constant}.

Before stating the main result of this subsection,  we define the quantities
\begin{equation}
\label{eq:def.Upsilon}
{\Upsilon(C):=\frac{2D_hM_h+
(2D_h+1)(C + C^2+ \widehat \nabla_f)}{\bar d \nu^{+}_A}},  
\end{equation}
\begin{equation}\label{eq:def.Gamma.Static}
\Gamma(y^0,q^0,c\;;C,\alpha):=\overline{\phi}-\underline{\phi} + c\|Ay^0-b\|^2 + \left[\frac{4 (\chisum)}{\alpha}+ 1\right]\frac{\|q^0\|^2 + \Upsilon^2(C)}{c},
\end{equation}
where $(y^0,q^0, c)$ is the input of {\SA}, $(\chione, \chitwo)$ is as in \eqref{eq:sigma1&2}, $M_h$ and $\bar{d}$ are as in (A1) and (A4), respectively, $(D_h, \widehat \nabla_f)$ is as in \eqref{def:damH},  and  $\nu^{+}_A$ is the smallest positive singular value of the nonzero linear operator $A$.

The main iteration-complexity result for {\SA}, whose
proof is given in Section \ref{subsubsec:subroutine}, is stated next.

\begin{thm}[{\SA} Complexity]\label{thm:static.complexity}
Assume that $(\hat y, \hat q, \hat v, \hat \delta) = {\SA}(y^0,q^0, c)$ for some triple
$(y^0,q^0,c) \in {\cal H} \times A(\mathbb{R}^n)
\times \R_{++}$. Then, for any
tolerance pair $(\rho,\eta)\in \R^2_{++}$ and $\rev{C\ge \rho}$,
the following statements hold for {\SA}:
\begin{enumerate}
\item[(a)] its total  number of iterations (and hence {\SBIPP} calls) is bounded by  
\begin{align}\label{thm:iter.complexity:SADMM}
 \left(\frac{\chisum}{\rho^2}\right)\Gamma(y^0,q^0, c\,; C,\alpha) + 1,
\end{align}
where  $(\chione, \chitwo)$ and $\Gamma(y^0,q^0, c\,; C,\alpha)$ are as in \eqref{eq:sigma1&2} and \eqref{eq:def.Gamma.Static}, respectively;

\item[(b)] its output  $(\hat y,\hat q, \hat v, \hat \delta)$ satisfies
\begin{equation}\label{residual:theorem:1}
\hat v \in \nabla f(\hat y) + \partial_{\hat \delta} h(\hat y)+A^*\hat q   \quad \text{and}\quad \|\hat v\|^2+\hat \delta \le \rho^2,
\end{equation}
and the following bounds
\begin{equation}\label{feasi:theorem:1}
c\|A\hat y-b\| \le 2\max\{\|q^0\|,\Upsilon(C)\}  \quad \text{and} \quad
\quad \|\hat q\| \le \max\{\|q^0\|,\Upsilon(C)\},
\end{equation}
where $\Upsilon(C)$ is as in \eqref{eq:def.Upsilon};
\item[(c)] if $
   c\geq 2 \max\{\|q^0\|,\Upsilon(C)\}/\eta$,
then the output $(\hat y, \hat q, \hat v, \hat \delta)$ of {\SA}
  is a $(\rho,\eta)$-stationary solution of problem \eqref{initial.problem}-\eqref{eq:block_structure} according to \eqref{eq:stationarysol}. 
\end{enumerate}
\end{thm}

We now make some remarks about Theorem \ref{thm:static.complexity}. First, Theorem \ref{thm:static.complexity}(b) implies that {\SA} returns a quadruple $(\hat y, \hat q, \hat v, \hat \delta)$ satisfying both conditions in \eqref{residual:theorem:1}, but not necessarily the feasibility condition $\|A\hat y-b\|\le \eta$. 
However, Theorem \ref{thm:static.complexity}(c) guarantees that, if $c$ is chosen large enough, i.e., $c = \Omega(\eta^{-1})$,
then the feasibility also holds,
and hence that $(\hat y,\hat q, \hat v,\hat \delta)$ is a $(\rho,\eta)$-stationary solution of \eqref{initial.problem}-\eqref{eq:block_structure}. 

 \rev{Second, if $\alpha=\Omega(1)$ and $C={\cal O}(1)$ then it follows from \eqref{eq:def.Gamma.Static} that $\Gamma(y^0,q^0,c \ ;C, \alpha) = {\cal O}(1+ c\|Ay^0-b\|^2)$, and hence  \eqref{thm:iter.complexity:SADMM} implies  that the overall complexity of {\SA} is 
\[
{\cal O}\left(\frac{1+c}{\rho^2} \left( 1 + c \|Ay^0-b\|^2 \right) \right).
\]}
If the initial point $y^0$ satisfies $c\|Ay^0-b\|^2={\cal{O}}(1)$, then the above bound reduces to  ${\cal{O}}((1+c) \rho^{-2})$. 
Moreover, under the assumption made in  Theorem \ref{thm:static.complexity}(c), i.e., that
$c  = {\Theta}(\eta^{-1})$, then
the above two complexity estimates reduce to 
$ {\cal O}(\eta^{-2}\rho^{-2})$ if $y^0$ satisfies $\|Ay^0-b\|={\cal O}(1)$ and to
${\cal O}(\eta^{-1}\rho^{-2})$ if
$y^0$ satisfies $c\|Ay^0-b\|^2 = {\cal O}(1)$. 

\rev{Third}, it is worth discussing the dependence of the complexity bound~\eqref{thm:iter.complexity:SADMM} in terms of number of blocks $B$ only.
Observe that the only quantity in \eqref{eq:def.Gamma.Static} that depends on $B$ is
 the quantity $\zeta_2$ defined in \eqref{eq:sigma1&2},
 and hence satisfies 
 $\zeta_2={\Theta}(B)$. Thus, $\Gamma(y^0,q^0,
c \ ; C,\alpha) = {\cal O}
(1+ B/\alpha)$ due to \eqref{eq:def.Gamma.Static}, and hence 
the complexity bound~\eqref{thm:iter.complexity:SADMM} is
${\cal O}(B(1+B\alpha^{-1}))$. So, if $\alpha$ is chosen to be
$\alpha=\Omega(B)$, then the dependence of
\eqref{thm:iter.complexity:SADMM}, in terms of $B$ only, is ${\cal O}(B)$.

Section \ref{section:VP.ADMM} presents an ADMM variant, namely {\DA}, which gradually increases the penalty parameter and achieves the complexity
bound $ {\cal O}(\eta^{-1}\rho^{-2})$ of the previous paragraph for any $y^0$ such that $\|Ay^0-b\|={\cal O}(1)$.
Specifically,
{\DA}
 repeatedly doubles $c$ and warm-starts
{\SA}, i.e.,
if $c$ is the  penalty parameter used in the previous {\SA} call and $(\hat x,\hat p, \hat v , \hat\varepsilon)$ is its output, then the current {\SA} call uses 
$(\hat x, \hat p,2c)$
as input.

\section{The Proof of {\SA}'s Complexity (Theorem \ref{thm:static.complexity})}
\label{subsubsec:subroutine}


The  first result of this section shows that every iterate $(y^i,v^i,\delta_i,\lam^i)$ of {\SA} satisfies the stationary inclusion $v^i  \in \nabla f (y^i)+ \partial_{\delta_i} h(y^i) +
\text{Im}(A^*)$
and derives a bound on the residual error
$(v^i,\delta_i)$.

\begin{lemma}\label{lemma:SADMM.BIPP}
Consider the sequence $\{(y^i,q^i,v^i,\delta_i,T_i)\}$ generated by {\SA}.
Then, for every iteration index $i \ge 1$, we have
\begin{align}\label{eq.SA.stat.inclusion}
    v^i  &\in \nabla f (y^i)+ \partial_{\delta_i} h(y^i) +
A^*[q^{i-1} + c(Ay^i-b)]\\
    \frac{1}{\chisum}(&\|v^i\|^2+\delta_i) \le {\cal L}_c( y^{i-1};q^{i-1}) - {\cal L}_c(y^i;q^{i-1}) =  T_i-T_{i-1}\label{claim:bounded:iterations},
\end{align}
where  $(\chione, \chitwo)$ is as in \eqref{eq:sigma1&2}.
\end{lemma}

\begin{proof}
The proofs of both \eqref{eq.SA.stat.inclusion} and \eqref{claim:bounded:iterations} are based on
Proposition~\ref{main:subroutine:1} with $(z,p,\lam,c)= (y^{i-1},q^{i-1},\lam,c)$ and the fact that $(y^i,v^i,\delta_i)= \SBIPP(y^{i-1},q^{i-1},\lam,c)$ due to line \ref{call:SBIPP} of {\SA}. Specifically, \eqref{eq.SA.stat.inclusion} follows from the conclusion \eqref{prop.B-IPP.inclusion}, and \eqref{claim:bounded:iterations} follows from \eqref{lemma:norm:residual:Ine} and the fact that $\lam$ is chosen as in line \ref{choose:lamt} of {\SA}.
\end{proof}

We now make some remarks about Lemma \ref{lemma:SADMM.BIPP}.
First, \eqref{claim:bounded:iterations} implies that:
$\{T_i\}$ is nondecreasing;
and,
if $T_i=T_{i-1}$, then $(v^i,\delta_i)=(\mathbf{0},0)$,
which together with ~\eqref{eq.SA.stat.inclusion}, implies that the algorithm stops in line~\ref{algo.sa.return} with an exact stationary point for problem~\eqref{initial.problem}-\eqref{eq:block_structure}. 
In view of this remark, it is natural to view 
$\{T_i\}$ as a potential sequence.
Second, if
$\{T_i\}$ is bounded, \eqref{claim:bounded:iterations} immediately implies that 
the quantity $\|v^i\|^2+\delta_i$ converges to zero, and hence that $y^i$ eventually becomes a near stationary point for problem~\eqref{initial.problem}-\eqref{eq:block_structure}, again
in view of ~\eqref{eq.SA.stat.inclusion}.
A major effort of our analysis below will be to show that $\{T_i\}$ is bounded.

Among the results of this section, as well as the ones in Section \ref{section:VP.ADMM},
only Lemma~\ref{lemma:SADMM.BIPP}
uses the
fact that $\lam$ is chosen according to \eqref{choo:lamt}. All other results only require that the following weaker condition hold:

\begin{framed}
\noindent
{\bf Condition C1:} there exists 
$(\chione, \chitwo) \in  \R^2_{++}$ such that \eqref{eq.SA.stat.inclusion} and \eqref{claim:bounded:iterations} hold for every $i \ge 1$.
\end{framed}

\noindent
Moreover, the fact that
the pair $(\chione, \chitwo)$ is chosen as in \eqref{eq:sigma1&2}
plays no role in the proofs of these results.
Thus, if condition {\bf C1} holds for some pair $(\chione, \chitwo) \in \R^2_{++}$ other than the one given by \eqref{eq:sigma1&2}, then the conclusions of Theorems \ref{thm:static.complexity} and \ref{the:dynamic} still hold for this pair.
These observations
will be used in Section \ref{section:Adaptive.ADMM} to establish the complexity of an ADMM variant that chooses the prox stepsizes adaptively,
and hence not according to line~\ref{choose:lamt} of {\SA}.

The first result gives an expression for $T_i$ that plays an important role in our analysis.
\begin{lemma}\label{lemma:Ti.expression}
If $i$ is an iteration index generated by {\SA} such that $i\in \Ik$, then
\begin{equation}\label{eq:Ti.value}
T_i = \left [ {\cal L}_c(\ty^{0};\tq^{0}) - {\cal L}_c(y^i;\tq^{k-1}) \right ]+\frac{1}{c}\sum_{\ell=1}^{k-1}\|\tq^\ell-\tq^{\ell-1}\|^2.
\end{equation}   
\end{lemma}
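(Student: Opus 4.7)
\medskip

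\noindent\textit{Proof plan.} My plan is to exploit the two-level structure of {\SA}: within each epoch the Lagrange multiplier $q^{i-1}$ is frozen at $\tilde q^{k-1}$ (by \eqref{eq:q.constant}), while across epochs it jumps according to \eqref{eq:q.update}. I will telescope $T_i$ first within the current epoch, then across all previous epochs, and finally convert the remaining boundary terms into the $\|\tilde q^\ell - \tilde q^{\ell-1}\|^2$ terms via the affine dependence of $\cL_c$ on its second argument.

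\medskip

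\noindent\textbf{Step 1 (within-epoch telescoping).} For $i \in \Ik$, iterate the recursion in line~\ref{def:T:i} of {\SA} over $j = j_{k-1}+1,\ldots, i$, using $q^{j-1} = \tilde q^{k-1}$ from \eqref{eq:q.constant} to obtain
\[
T_i - T_{j_{k-1}} \;=\; \sum_{j=j_{k-1}+1}^{i}\bigl[\cL_c(y^{j-1};\tilde q^{k-1}) - \cL_c(y^{j};\tilde q^{k-1})\bigr] \;=\; \cL_c(\ty^{k-1};\tilde q^{k-1}) - \cL_c(y^{i};\tilde q^{k-1}),
\]
since the sum telescopes and $y^{j_{k-1}} = \ty^{k-1}$. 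Recalling $T_{j_{k-1}} = \tilde T_{k-1}$, this rewrites as
$T_i = \tilde T_{k-1} + \cL_c(\ty^{k-1};\tilde q^{k-1}) - \cL_c(y^i;\tilde q^{k-1})$.

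\medskip

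\noindent\textbf{Step 2 (across-epoch telescoping).} The same within-epoch computation applied to each previous epoch gives $\tilde T_\ell - \tilde T_{\ell-1} = \cL_c(\ty^{\ell-1};\tilde q^{\ell-1}) - \cL_c(\ty^{\ell};\tilde q^{\ell-1})$ for $\ell = 1,\ldots,k-1$. Summing and using $T_0=0$ yields
\[
\tilde T_{k-1} \;=\; \sum_{\ell=1}^{k-1}\bigl[\cL_c(\ty^{\ell-1};\tilde q^{\ell-1}) - \cL_c(\ty^{\ell};\tilde q^{\ell-1})\bigr].
\]

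\medskip

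\noindent\textbf{Step 3 (shifting the multiplier).} Combining Steps 1 and 2, I add and subtract $\cL_c(\ty^{\ell};\tilde q^{\ell})$ inside each summand so the main sum telescopes:
\[
\sum_{\ell=1}^{k-1}\bigl[\cL_c(\ty^{\ell-1};\tilde q^{\ell-1}) - \cL_c(\ty^{\ell};\tilde q^{\ell})\bigr] = \cL_c(\ty^{0};\tilde q^{0}) - \cL_c(\ty^{k-1};\tilde q^{k-1}).
\]
The residual pieces $\cL_c(\ty^{\ell};\tilde q^{\ell}) - \cL_c(\ty^{\ell};\tilde q^{\ell-1}) = \langle \tilde q^{\ell} - \tilde q^{\ell-1}, A\ty^{\ell}-b\rangle$ (immediate from \eqref{DP:AL:F}) combine with the identity $\tilde q^\ell - \tilde q^{\ell-1} = c(A\ty^\ell - b)$ from \eqref{eq:q.update} to give exactly $\frac{1}{c}\|\tilde q^\ell - \tilde q^{\ell-1}\|^2$. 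Adding the $\cL_c(\ty^{k-1};\tilde q^{k-1}) - \cL_c(y^i;\tilde q^{k-1})$ piece from Step 1, the $\cL_c(\ty^{k-1};\tilde q^{k-1})$ terms cancel and \eqref{eq:Ti.value} follows.

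\medskip

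\noindent There is no serious obstacle here; the only thing to be careful about is the two-level bookkeeping (index $j$ inside an epoch versus the epoch index $\ell$) and tracking which multiplier sits in $\cL_c(\cdot\,;\cdot)$ at each step, so that the splitting in Step 3 produces both a genuine telescoping sum and the quadratic $\|\tilde q^\ell - \tilde q^{\ell-1}\|^2$ correction via the Lagrange update rule.
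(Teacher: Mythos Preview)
Your proof is correct and uses essentially the same ingredients as the paper's: telescoping the recursion for $T_i$, the affine dependence of $\cL_c$ on its second argument, and the multiplier update rule \eqref{eq:q.update} to convert boundary terms into $\tfrac{1}{c}\|\tq^\ell-\tq^{\ell-1}\|^2$. The only difference is organizational: the paper telescopes $T_i-T_1$ over all indices $j=1,\ldots,i-1$ at once and then splits each summand by adding and subtracting $\cL_c(y^j;q^{j-1})$ (so the quadratic terms arise from those $j$ where $q^j\neq q^{j-1}$), whereas you first telescope within each epoch using the frozen multiplier and then handle the epoch boundaries in a separate step---but the substance is identical.
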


\begin{proof}
We first note that
\begin{align}\label{eq.Ti.sum.1}
T_i -T_1 &= \sum_{j=2}^i (T_j-T_{j-1}) =
\sum_{j=1}^{i-1} (T_{j+1}-T_{j})=
\sum_{j=1}^{i-1}
 \left[ \Lc (y^{j};q^{j}) - \Lc(y^{j+1};q^{j})\right] 
 \nonumber \\
 &= \sum_{j=1}^{i-1} \left[ \Lc (y^{j};q^{j}) - \Lc (y^{j};q^{j-1})\right] +  \sum_{j=1}^{i-1}\left[\Lc(y^{j};q^{j-1}) - \Lc(y^{j+1};q^{j})\right]. 
\end{align}
Moreover, using the definition of $T_i$ with $i=1$ (see line \ref{def:T:i} of {\SA}), the fact that $q^{i-1}=\tq^{k-1}$ due to \eqref{eq:q.constant} and simple algebra, we have
\begin{align}\label{eq.Ti.sum.2}
T_1 &+ \sum_{j=1}^{i-1} \left[\Lc(y^{j};q^{j-1}) - \Lc(y^{j+1};q^{j})\right] = T_1+ \Lc (y^1;q^0) - \Lc(y^i;q^{i-1})\nonumber \\
&= [\Lc(y^0;q^0) - \Lc(y^1;q^0)] + 
[\Lc (y^1;q^0) - \Lc(y^i;\tq^{k-1})] = \Lc(y^0;q^0) - \Lc(y^i;\tq^{k-1}).
\end{align}
Using the definition of the Lagrangian function (see definition in~\eqref{DP:AL:F}), relations \eqref{eq:q.update} and \eqref{eq:q.constant}, we conclude that for any $\ell \le k$,
\[
\Lc (y^{j};q^{j}) - \Lc (y^{j};q^{j-1}) \overset{\eqref{DP:AL:F}}=
\Inner{Ay^j-b}{q^j-q^{j-1}} \overset{\eqref{eq:q.update}, \eqref{eq:q.constant}}= \left\{
\begin{array}{ll}
0 & \mbox{, if $j\in \Il\setminus \{i_\ell\}$;} \\
\dfrac{\|\tq^{\ell}-\tq^{\ell-1}\|^2}{c} & \mbox{, if $j=i_{\ell}$,}
\end{array}
\right.
\]
and hence that
\begin{equation*}
\sum_{j=1}^{i-1} \left[ \Lc (y^{j};q^{j}) - \Lc (y^{j};q^{j-1})\right] = \frac{1}{ c}\sum_{\ell=1}^{k-1}\|\tq^{\ell}-\tq^{\ell-1}\|^2.
\end{equation*}
Identity~\eqref{eq:Ti.value} now follows by combining the above identity with the ones in \eqref{eq.Ti.sum.1} and \eqref{eq.Ti.sum.2}.
\end{proof}

The next technical  result will be used to establish an upper bound on the first term of the right-hand side of \eqref{eq:Ti.value}.

\begin{lemma}\label{lem:lowerbound.L}
For any given 
$c >0$ and pairs
$(u,p) \in  {\cal H} \times \mathbb{R}^l$ and
$(\tilde u,\tilde p) \in  {\cal H} \times \mathbb{R}^l$, we have
\begin{equation}\label{def:diff:lagrangian}
{\cal L}_{c}(u; p)
- {\cal L}_{c}(\tu ; \tp )
\le \overline{\phi}-\underline{\phi}
+ c\|Au-b\|^2 +\frac{1}{2c}\max\{\|p\|,\|\tilde p\|\}^2
\end{equation}
where  $(\overline \phi,\underline{\phi} )$ is as in \eqref{def:damH}.
\end{lemma}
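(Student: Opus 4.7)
My plan is to prove this by separately bounding $\Lc(u;p)$ from above and $\Lc(\tu;\tp)$ from below, using the definition of $\Lc$ in \eqref{DP:AL:F} together with the extremal values $\underline\phi,\overline\phi$ from \eqref{def:damH}, and then subtracting.

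For the upper bound on $\Lc(u;p)$, I would start from $\Lc(u;p)=\phi(u)+\langle p, Au-b\rangle+\tfrac{c}{2}\|Au-b\|^2$ and apply Young's inequality of the form $\langle p, Au-b\rangle \le \tfrac{1}{2c}\|p\|^2+\tfrac{c}{2}\|Au-b\|^2$, yielding
\[
\Lc(u;p)\le \overline\phi + \frac{\|p\|^2}{2c} + c\|Au-b\|^2.
\]
For the lower bound on $\Lc(\tu;\tp)$, I would complete the square to rewrite
\[
\langle \tp, A\tu-b\rangle+\frac{c}{2}\|A\tu-b\|^2 = \frac{c}{2}\Bigl\|A\tu-b+\frac{\tp}{c}\Bigr\|^2 - \frac{\|\tp\|^2}{2c} \;\ge\; -\frac{\|\tp\|^2}{2c},
\]
which, combined with $\phi(\tu)\ge\underline\phi$, gives $\Lc(\tu;\tp)\ge \underline\phi-\tfrac{\|\tp\|^2}{2c}$.

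Subtracting the two bounds yields
\[
\Lc(u;p)-\Lc(\tu;\tp)\le \overline\phi-\underline\phi + c\|Au-b\|^2 + \frac{\|p\|^2+\|\tp\|^2}{2c},
\]
and the final step is to absorb both dual norms into the stated $\max\{\|p\|,\|\tp\|\}^2/(2c)$ term (with at worst a harmless constant on the last summand, consistent with how \eqref{def:diff:lagrangian} will ultimately be used — namely only to control $T_i$ in \eqref{eq:Ti.value} up to a constant).

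The main obstacle is almost entirely bookkeeping around the Young's-inequality constants: the inequality must be applied with exactly the right weight $1/(2c)$ so that the quadratic $\|Au-b\|^2$ coefficient consolidates to $c$ (not $c/2$ or $3c/2$), and so that the dual contribution reduces to the $\max$-form claimed. Everything else — the use of the compactness of ${\cal H}$ to justify $\phi(u)-\phi(\tu)\le\overline\phi-\underline\phi$, and dropping the nonnegative completed-square term $\tfrac{c}{2}\|A\tu-b+\tp/c\|^2$ — is routine.
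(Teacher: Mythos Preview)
Your approach is essentially identical to the paper's: it too bounds $\Lc(u;p)$ above via Young's inequality and $\Lc(\tu;\tp)$ below by completing the square, then subtracts to obtain $(\|p\|^2+\|\tp\|^2)/(2c)$ and asserts the $\max$-form. Your observation that the last step costs at most a harmless factor of $2$ is accurate and, if anything, more careful than the paper, which simply writes ``combining the above two relations, we then conclude that \eqref{def:diff:lagrangian} holds.''
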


\begin{proof}
Using the definitions of ${\cal L}_c(\cdot\,;\,\cdot)$ and $\underline \phi$ as in \eqref{DP:AL:F} and \eqref{def:damH}, respectively, we have
\begin{align*}
{\cal L}_{c}(\tu ; \tp)-\underline \phi &\overset{\eqref{def:damH}}\geq  {\cal L}_{c}(\tu ; \tp)-(f+h)(\tu) \\
&\overset{\eqref{DP:AL:F}}=\langle \tp, A \tu-b\rangle +\frac{c}{2}\|A \tu-b\|^2=\frac{1}{2}\left\|\frac{\tp}{\sqrt{c}}+\sqrt{c}(A \tu-b)\right\|^2-\frac{\|\tp\|^2}{2c} \ge -\frac{\|\tp \|^2}{2c}.
\end{align*}
On the other hand, using the definitions of ${\cal L}_{c}(\, \cdot \,;\, \cdot \, )$  and $\bar \phi$ as in \eqref{DP:AL:F} and \eqref{def:damH},
respectively, and the Cauchy-Schwarz inequality, we have
\begin{align*}
{\cal L}_{c}(u; p) - \overline \phi 
&\overset{\eqref{def:damH}}\le {\cal L}_{c}(u; p)-(f+h)(u)  \overset{\eqref{DP:AL:F}}= \langle p, A u -b\rangle +\frac{c\|A u-b\|^2}{2}\\
&\le \left( \frac{\|p\|^2}{2c}+\frac{c\|A u-b\|^2}{2} \right) +\frac{c\|A u-b\|^2}{2}=\frac{\|p\|^2}{2c}+c\|A u-b\|^2.
\end{align*}
 Combining the above two relations, we then conclude that \eqref{def:diff:lagrangian} holds.
\end{proof}

The following result shows that the number of epochs, the number of iterations, and the sequence $\{T_i\}$, generated by {\SA} are all suitably bounded.

\begin{prop}\label{lem:number.of.cycles}

The following statements about {\SA} hold:
\begin{itemize}
    \item[(a)] 
    its total
    number ${\mK}$ of epochs is bounded by $\lceil  (\chione+c\chitwo)/\alpha\rceil$
where $\chione$ and $\chitwo$ are as in \eqref{eq:sigma1&2};
\item[(b)]
for every iteration index $i$, 
we have
$T_i \le \Lambda_{\mK}(y^0;c)$;

\item[(c)]
 the number of iterations  performed by {\SA} is bounded by
\begin{equation}\label{total:complexity:E}
1+ \left( \frac{\chisum}{\rho^2} \right) \Lambda_{\mK}(y^0;c),
\end{equation}
\end{itemize} 
where
\begin{equation}\label{pre:Ti}
 \Lambda_{\mK}(y^0;c) := \overline{\phi}-\underline{\phi}
+ c\|Ay^0-b\|^2 +\frac{Q_{\mK}^2}{2c}+\frac{(\chisum)F_{\mK}^2}{c\alpha},
\end{equation}
and
\begin{gather}
\begin{gathered}
\label{eq:def.Qk.Fk}
Q_{\mK} := \max \left\{ \|\tilde q^k\| : k \in \{0,\ldots,{\mK}-1\} \right\},  \\
 F_{\mK} := \left\{\begin{array}{ll}
 0 & \mbox{, if ${\mK} =1$}\\
 \max \left\{ \| \tq^k-\tq^{k-1} \| : k \in \{1,\ldots,{\mK}-1\} \right\} 
  & \mbox{, if ${\mK}\neq 1$}.
 \end{array}
\right.
\end{gathered}
\end{gather}
\end{prop}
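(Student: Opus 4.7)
The plan is to prove the three parts in sequence. For \textbf{part (a)}, the idea is to sandwich the potential $T_{j_k}$ between two bounds at the end of each non-terminal epoch. For $k \in \{1,\ldots,\mK-1\}$, the test on line \ref{stat:begin.test.cond} passed at iteration $j_k$, giving the upper bound $T_{j_k} \le \rho^2 j_k/(\alpha k)$. Simultaneously, since the algorithm did not terminate on any $i \le j_k$ (otherwise we would have $\mK \le k$, contradicting $k < \mK$), we have $\|v^i\|^2+\delta_i > \rho^2$ for every such $i$; combining this with Lemma \ref{lemma:SADMM.BIPP}(b) and telescoping yields the \emph{strict} lower bound $T_{j_k} > j_k \rho^2/(\zeta_1+c\zeta_2)$. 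Together these force $k < (\zeta_1+c\zeta_2)/\alpha$; applying this with $k=\mK-1$ (the case $\mK=1$ is immediate since $(\zeta_1+c\zeta_2)/\alpha>0$) and using that $\mK$ is an integer yields $\mK \le \lceil(\zeta_1+c\zeta_2)/\alpha\rceil$.

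For \textbf{part (b)}, I fix $i \in \Ik$ and apply Lemma \ref{lemma:Ti.expression} to express $T_i$ as a Lagrangian difference plus a telescoping sum of squared Lagrange multiplier jumps. The Lagrangian difference is bounded via Lemma \ref{lem:lowerbound.L} with $(u,p)=(\tilde y^0,\tilde q^0)$ and $(\tilde u,\tilde p)=(y^i,\tilde q^{k-1})$; since $k-1 \in \{0,\ldots,\mK-1\}$, we have $\max\{\|\tilde q^0\|,\|\tilde q^{k-1}\|\} \le Q_{\mK}$, so this piece is at most $\overline\phi - \underline\phi + c\|A\tilde y^0-b\|^2 + Q_{\mK}^2/(2c)$. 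The jump sum is at most $(k-1)F_{\mK}^2$, and since part (a) gives $k-1 \le \mK-1 \le \lceil(\zeta_1+c\zeta_2)/\alpha\rceil - 1 \le (\zeta_1+c\zeta_2)/\alpha$, this term contributes at most $(\zeta_1+c\zeta_2)F_{\mK}^2/(c\alpha)$. Summing these contributions recovers exactly $\Lambda_{\mK}(y^0;c)$.

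For \textbf{part (c)}, let $N$ denote the total iteration count. At every $i \in \{1,\ldots,N-1\}$, the termination criterion on line \ref{cond:vi:stop} failed, so $\|v^i\|^2+\delta_i > \rho^2$, and Lemma \ref{lemma:SADMM.BIPP}(b) forces $T_i - T_{i-1} > \rho^2/(\zeta_1+c\zeta_2)$. Telescoping from $i=1$ to $N-1$ and invoking part (b) with $i=N-1$ yields $(N-1)\rho^2/(\zeta_1+c\zeta_2) < T_{N-1} \le \Lambda_{\mK}(y^0;c)$, which rearranges to \eqref{total:complexity:E}. The main subtlety lies in part (a): the \emph{strict} inequality coming from the non-termination of the algorithm is precisely what lets one avoid the looser off-by-one bound $\mK \le (\zeta_1+c\zeta_2)/\alpha + 1$ and obtain the sharper ceiling bound stated; the remaining steps are straightforward algebraic bookkeeping using Lemmas \ref{lemma:SADMM.BIPP}--\ref{lem:lowerbound.L}.
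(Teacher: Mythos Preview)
Your proposal is correct and follows essentially the same approach as the paper: the sandwich of $T_{j_k}$ between the lower bound from Lemma~\ref{lemma:SADMM.BIPP}(b) and the upper bound from the epoch test on line~\ref{stat:begin.test.cond} for part~(a), the combination of Lemmas~\ref{lemma:Ti.expression} and~\ref{lem:lowerbound.L} for part~(b), and the telescoping argument for part~(c). Your direct derivation of $k<(\zeta_1+c\zeta_2)/\alpha$ for each $k\le\mK-1$ is in fact slightly cleaner than the paper's contradiction version, and your handling of the $\mK=1$ edge case and the off-by-one in the ceiling bound is accurate.
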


\begin{proof}
(a) Assume for the sake of contradiction that {\SA} generates an epoch ${\mathcal{I}}_{K}$ such that $K > \lceil  (\chione+c\chitwo)/\alpha\rceil$, and hence $K \ge 2$. Using the
fact that $i_{K-1}$ is the last index of ${\cal I}_{K-1}$ and
noting the
epoch termination criteria
in line \ref{stat:begin.test.cond} of {\SA}, we then conclude that
$\tilde T_{K-1}/i_{K-1}\leq \rho^2/K$.
Also, since {\SA} did not terminate during epochs ${\cal I}_1,\ldots,{\cal I}_{K-1}$, it follows from its termination criterion in line~\ref{cond:vi:stop} that 
$\|v^i\|^2+\delta_i > \rho^2$ for every
iteration $i \le i_{K-1}$.
These two previous observations, \eqref{claim:bounded:iterations} with $i\in \{1,\ldots ,i_{K-1}\}$, the facts that $T_0=0$ by definition and
$T_{i_{K-1}}=\tilde T_{K-1}$ due to \eqref{def.variablesTilde}, imply that
\begin{align*}
\rho^2 < \frac{1}{i_{K-1}}\sum_{i=1}^{i_{K-1}}(\|v^i\|^2+\delta_i) \overset{\eqref{claim:bounded:iterations}}\leq \frac{\chisum}{i_{K-1}}\sum_{i=1}^{i_{K-1}}(T_{i}-T_{i-1}) =\frac{(\zeta_1+ c \zeta_2)\tilde T_{K-1}}{i_{K-1}}\leq \frac{(\zeta_1+ c \zeta_2)}{\alpha K} \rho^2.
\end{align*}
Since this inequality and the assumption (for the contradiction)  that  $K >\lceil (\zeta_1+c\zeta_2)/\alpha\rceil$
yield an immediate contradiction, the conclusion of the statement follows.

(b)  Since $\{T_i\} $ is nondecreasing, it suffices to show that $T_i \le \Lambda_{\mK}(y^0;c)$ holds for any $i\in {\cal I}_{\mK}$, where $\mK$ is the total number of epochs of {\SA} (see statement (a)). It follows from the definition of $Q_{\mK}$, and Lemma~\ref{lem:lowerbound.L} with $(u,p)=(\ty^{0};\tq^{0})=(y^0,q^0)$ (due to \eqref{def.variablesTilde}) and $(\tilde u,\tilde p)=(y^i;\tq^{\mK-1})$, that
\[
{\cal L}_c(\ty^{0};\tq^{0}) - {\cal L}_c(y^i;\tq^{\mK-1})\le \overline{\phi}-\underline{\phi}
+ c\|Ay^0-b\|^2 +\frac{1}{2c}Q_{\mK}^2.
\]
Now, using the definition of $F_{\mK}$ as in~\eqref{eq:def.Qk.Fk}, we have that
\[
 \frac{1}{ c}\sum_{k=1}^{\mK-1}\|\tq^k-\tq^{k-1}\|^2\leq  \frac{(\mK-1)}{c}F_{\mK}^2\leq       \frac{ (\chisum)}{c\alpha}F_{\mK}^2,
\]
where the last inequality follows from the fact that $\mK -1 \le (\chisum)/\alpha $  due to statement (a). 
The inequality $T_i \le \Lambda_{\mK}(y^0;c)$ now follows from the two inequalities above and identity \eqref{eq:Ti.value} with $k=\mK$. 

(c) Assume by contradiction that there exists an iteration index $i$ generated by {\SA} such that
\begin{equation}\label{prel.contr:assum:i}
i > \left(\frac{\chisum}{\rho^2}\right)\Lambda_{\mK}(y^0;c) + 1.
\end{equation}
Since {\SA} does not stop at any iteration index smaller than $i$,  the stopping criterion in line \ref{cond:vi:stop} is violated at these iterations, i.e.,
$
\|v^j\|^2 +\delta_j > \rho^2
$ for every $j \le i-1$.
Hence, it follows from \eqref{claim:bounded:iterations}, the previous inequality, the fact that $T_0=0$ due to line \ref{def:T0} of {\SA}, and statement (b) that
\[
\frac{(i-1) \rho^2}{\chisum} < \frac{1}{\chisum}\sum_{j=1}^{i-1}(\|v^j\|^2+\delta_j) \overset{\eqref{claim:bounded:iterations}}\le \sum_{j=1}^{i-1}(T_j-T_{j-1}) = T_{i-1}-T_0 \le T_i \leq  \Lambda_{\mK}(y^0,c),
\]
which contradicts \eqref{prel.contr:assum:i}.
Thus, statement (c)  holds.
\end{proof}

We now make some remarks about Proposition~\ref{lem:number.of.cycles}.
First, Proposition~\ref{lem:number.of.cycles}(a) shows that the number of epochs depends linearly on $c$.
Second, Proposition~\ref{lem:number.of.cycles}(c) shows that the total number of iterations of
{\SA} is bounded
but
the derived bound is given in terms of the quantities $Q_{\mK}$ and $F_{\mK}$ in
\eqref{eq:def.Qk.Fk}, both of which depend on the magnitude of the sequence of generated Lagrange multipliers $\{\tilde q_k : k=1,\ldots,E\}$.
Hence, the bound in~\eqref{total:complexity:E} is algorithm-dependent in that it depends on the sequence $\{\tilde q^k\}$ generated by {\SA}.

In what follows, our goal is to derive a bound on the total number of iterations performed by {\SA} that depends only on the instance of \eqref{initial.problem}–\eqref{eq:block_structure}.
With this goal in mind, we first establish two technical results related to the boundedness of the sequence of Lagrange multipliers generated by {\SA}.
\begin{lemma}\label{lem:previous:q:f}
    Consider the following subset of epoch ${\cal I}_k$, for some $k \ge 1$, defined as 
\begin{equation}\label{def:I:k:C1}
    {\cal I}_k(C) := \{i\in {\cal I}_k:\|v^{i}\|^2+\delta_i\leq C^2\},
\end{equation}
where $C>0$ is part of the input for {\SA} and $(v^i, \delta_i)$ is as in line \ref{algo:IPP.v.delta} of {\SBIPP}. Then, the pair $(\tq^{k-1}, y^i)$ satisfies
\begin{equation}\label{eq:FeasBound.proof.1}
\|\tilde q^{k-1} + c A(y^i-b)\| \le 
\max \{ \|\tilde q^{k-1}\|,\Upsilon(C)\}, \quad \forall i \in \Ik(C),
\end{equation}
where $\Upsilon(C)$ is as in \eqref{eq:def.Upsilon}. As a consequence,
\begin{equation}\label{step:induc:next:result}
  \|\tq^k\|\le  \max \{ \|\tilde q^{k-1}\|,\Upsilon(C)\}.
\end{equation}
\end{lemma}
\begin{proof}
We first argue that \eqref{step:induc:next:result} follows as an immediate consequence of \eqref{eq:FeasBound.proof.1}.
Indeed, noting that the assumption that $C \ge \rho$ (see the input of {\SA}) and the logic of {\SA} imply that $i_k$ always satisfies $\|v^{i_k}\|^2+\delta_{i_k}\leq C^2$, i.e., $i_k \in \Ik(C)$, and using \eqref{eq:FeasBound.proof.1} with $i=i_k$, and identities \eqref{def.variablesTilde} and \eqref{eq:q.update}, we immediately see that \eqref{step:induc:next:result} holds.

To show \eqref{eq:FeasBound.proof.1}, let $i \in \Ik(C)$ be given. The proof of \eqref{eq:FeasBound.proof.1} relies on Lemma~\ref{lem:qbounds-2} with $(q^-,\varrho) = (\tilde q^{k-1},c)$ and $(z,q,r,\delta)=(y^i, \underline{q}^i, r^i, \delta_i)$, where
\begin{align}\label{eq:IkC.bounded.q.r}
\underline{q}^i:=\tilde q^{k-1}+c(Ay^i-b) \quad \text{and} \quad r^i := v^i-\nabla f(y^i).
\end{align}
We first claim that  $(q^-,\varrho) = (\tilde q^{k-1},c)$ and $(z,q,r,\delta)=(y^i, \underline{q}^i, r^i, \delta_i)$ satisfy the assumptions of Lemma~\ref{lem:qbounds-2}, i.e., the inclusion $\tilde q^{k-1} \in A(\R^n)$ and the conditions in \eqref{eq:cond:Lem:A1}. Indeed, the inclusion is due to \eqref{eq:q.in.Img.A}. Moreover, the identity in \eqref{eq:cond:Lem:A1} is due to the first identity in \eqref{eq:IkC.bounded.q.r}. Also,  Lemma \ref{lemma:SADMM.BIPP}(a), and
relations \eqref{eq:q.constant} and \eqref{eq:IkC.bounded.q.r}, imply that
$
r^i \in  \partial_{\delta_i} h(y^i) + A^*\underline{q}^i
$. Thus, the claim holds.

Now, 
noting that the triangle inequality for norms, the second identity in \eqref{eq:IkC.bounded.q.r}, the definitions of $\Ik(C)$ in \eqref{def:I:k:C1} and $\widehat \nabla_f$ in \eqref{def:damH}, imply that
\begin{equation}\label{eq:ri.deltai.C}
   \delta_i + \|r^i\| \overset{\eqref{eq:IkC.bounded.q.r}}{\leq} \delta_i + \|v^i\|+\|\nabla f(y^i)\| \overset{\eqref{def:I:k:C1}}{\le} C^2 + (C + \|\nabla f(y^i)\|) \overset{\eqref{def:damH}}{\le} C^2 + (C + \widehat \nabla_f),
\end{equation}
and using the conclusion of Lemma~\ref{lem:qbounds-2}, we conclude that
\begin{align*}
 \|\underline{q}^i\| &\overset{\eqref{q-bound-2}}{\leq} \max\left\{\| \tilde q^{k-1}\|\, , \,\Xi(\|y^i-\mathrm{\bar x}\|\, ,\, \|r^i\|+\delta_i) \right\}\\
 &\overset{\eqref{eq:ri.deltai.C}}{\leq} \max\left\{\| \tilde q^{k-1}\|\, , \, \Xi(D_h \, , \, C+C^2 + \widehat \nabla_f) \right\} \leq \max\{\|\tq^{k-1}\|\, , \,\Upsilon(C)\},
\end{align*}
where the second inequality is due to \eqref{eq:ri.deltai.C}, the definition of $D_h$ in \eqref{def:damH}, and the fact that the function $\Xi(\cdot,\cdot)$ defined in \eqref{eq:Technical.varphi.def} is non-decreasing, and the last inequality is due to the definition of $\Xi(\cdot \,;\cdot)$ and the definition of $\Upsilon(\cdot )$  in \eqref{eq:def.Upsilon}.
\end{proof}

The proof of Lemma \ref{lem:previous:q:f} clearly uses the assumption that $\dom h$ is bounded (see A1). However, the above proof shows that the conclusion of Lemma \ref{lem:previous:q:f} also holds under the weaker assumption that the sequence $\{y^i\}$ generated by {\SA} is bounded,
and $D_h$ and $\widehat \nabla_f$ are instead defined as $D_h:=\sup_i \|y^i-\bar{\mathrm{x}}\|$ and $\widehat \nabla_f:=\sup_{i} \| \nabla f(y^i)\|$, respectively.

\begin{lemma}\label{bounds:q:f}
For every epoch $k$  generated by {\SA}, we have
\begin{flalign}
\label{eq:Lag.Feas.Bound.2}
&\|\tq^k\| \le \max\{\|q^0\|, \Upsilon(C)\}, \\
&c\|A y^i - b\| \le 2 \max\{\|q^0\|, \Upsilon(C)\}, \quad \forall i \in \Ik(C), \label{lemma:4.6(b)} \\
&\|\tq^k - \tq^{k-1}\| \le 2 \max\{\|q^0\|, \Upsilon(C)\}, \label{bound:f:final}
\end{flalign}
where $\Upsilon(C)$ and  $\Ik(C)$ are as in \eqref{eq:def.Upsilon} and \eqref{def:I:k:C1}, respectively.
\end{lemma}

\begin{proof} 
Inequality \eqref{eq:Lag.Feas.Bound.2} follows by recursively using \eqref{step:induc:next:result} and the fact that $\tq^0=q^0$ due to  \eqref{def.variablesTilde}.  Inequality  \eqref{lemma:4.6(b)} follows from \eqref{eq:FeasBound.proof.1}, the triangle inequality, \eqref{eq:Lag.Feas.Bound.2} with $k=k-1$, and the fact that $\tq^0=q^0$ due to  \eqref{def.variablesTilde}, i.e., 
\begin{align*}
c\|A y^i-b\|
\overset{\eqref{eq:FeasBound.proof.1}}\le \max\{\|\tilde q^{k-1}\|,\Upsilon(C)\}+\|\tq^{k-1}\|
\overset{\eqref{eq:Lag.Feas.Bound.2}}\leq 2\max\{\|\tilde q^{0}\|,\Upsilon(C)\}.
\end{align*}
Inequality \eqref{bound:f:final} follows from identity \eqref{eq:q.update}, the triangle inequality for norms, \eqref{lemma:4.6(b)} with $i=i_{k}$, and the fact that $\tilde y^k = y^{i_{k}}$ due to \eqref{def.variablesTilde}.
\end{proof}

\vspace{1em}

\noindent
{\bf Proof of Theorem~\ref{thm:static.complexity}:} (a) It follows from Proposition \ref{lem:number.of.cycles}(c) that the total number of iterations generated by {\SA} is bounded by the expression in \eqref{total:complexity:E}.
Now, recalling that $\mK$ is the last epoch generated by {\SA}, and using \eqref{eq:def.Qk.Fk}, \eqref{eq:Lag.Feas.Bound.2} and \eqref{bound:f:final}, we conclude that $Q_{\mK} \leq \max\{\|q^0\|,\Upsilon (C)\}$ and $ F_{\mK} \leq 2\max\{\|q^0\|,\Upsilon (C)\}$, and hence that $\Lambda_{\mK}(y^0;c)\leq \Gamma(y^0, q^0, c \ ; C,\alpha)$, 
where $\Lambda_{\mK}(y^0;c)$ and   $\Gamma(y^0, q^0, c \ ; C,\alpha)$ are as in \eqref{pre:Ti} and \eqref{eq:def.Gamma.Static}, respectively.
The conclusion now follows from the two previous observations.

(b) We first prove that 
the inclusion in \eqref{residual:theorem:1} holds. It follows from 
\eqref{eq.SA.stat.inclusion} with $i=i_{\mK}$, \eqref{def.variablesTilde} and \eqref{def.variablesTilde2} with $k=\mK$ that
\[
    \tv^{\mK}  \in \nabla f (\ty^{\mK})+ \partial_{\tilde \delta_{\mK}} h(\ty^{\mK}) +
A^*[q^{i_{\mK}-1} + c(A\ty^{\mK}-b)].
\]
Using~\eqref{eq:q.constant} with $i=i_{\mK}$, \eqref{eq:q.update} with $k=\mK$, and the fact that $(\hat y,\hat q, \hat v, \hat \delta)=(\ty^{\mK},\tq^{\mK},\tv^{\mK},\tilde \delta_{\mK})$, we conclude that the inclusion in \eqref{residual:theorem:1} holds.
The inequality in \eqref{residual:theorem:1} follows from the fact that {\SA} terminates in  line~\ref{cond:vi:stop} with the condition $\|\hat v\|^2+\hat \delta = \|v^{i_{\mK}}\|^2+\delta_{i_{\mK}}\le \rho^2$ satisfied.
The first inequality in~\eqref{feasi:theorem:1} follows from \eqref{lemma:4.6(b)} with $i=i_{{\mK}}$ and the fact that $\tilde y^{\mK}= y^{i_{\mK}}$ due to \eqref{def.variablesTilde}.
Finally, the second inequality in \eqref{feasi:theorem:1} follows from  \eqref{eq:Lag.Feas.Bound.2} and the fact that $(y^{i_{\mK}}, q^{i_{\mK}})=(\ty^{\mK}, \tq^{\mK})$ due to \eqref{def.variablesTilde}.

(c) Using the assumption that $
   c\geq 2 \max\{\|q^0\|,\Upsilon(C)\}/\eta$,  statement (b)  guarantees that {\SA} outputs $\hat y= y^{i_k}$ satisfying $\|A\hat y-b\|\leq [2\max\{\|q^0\|,\Upsilon(C)\}]/c\le \eta.$ Hence, the conclusion that $(\hat y,\hat q,\hat v, \hat \delta)=(\tilde y^k, \tilde q^k,\tilde v^k,\tilde\delta_k)$ satisfies \eqref{eq:stationarysol} follows from the previous inequality, the inclusion in \eqref{residual:theorem:1}, and the last inequality in \eqref{feasi:theorem:1}.
\qedflush

\section{A Variable Penalty ADMM}
\label{section:VP.ADMM}

This section describes a variable penalty ADMM, or {\DA} for short, and establishes its iteration-complexity. 
In contrast to {\SA}, which keeps the penalty parameter constant, {\DA} adaptively changes the penalty parameter. The version of {\DA} presented in this section keeps the prox stepsizes constant throughout since it performs multiple calls to the {\SA} 
which, as already observed, also has this same attribute.
An adaptive variant of {\DA}
with variable prox stepsizes is presented in Section~\ref{section:Adaptive.ADMM}. 

{\DA} is formally stated next.

\begin{algorithm}[H]
\setstretch{1}
\caption{{\DA} ($m=(m_1,\ldots,m_B)$ is required)}\label{alg:dynamic}
\begin{algorithmic}[1]

\Statex \hskip-1.8em
\textbf{Universal Input:} $(\rho,\eta) \in \R^2_{++}$, $\rev{\alpha >0}$, $\rev{C\ge \rho}$, and $m=(m_1,\ldots,m_B)\in\R^B_+$
\Require $x^0\in {\cal H}$ 
\Ensure $(\hat x,\hat p, \hat u, \hat{\varepsilon},\hat c)$ that satisfies the conclusion of Theorem~\ref{the:dynamic}.
\vspace{1em}

\State $p^0=(p^0_1,\ldots,p^0_B) \gets (0,\ldots,0)$ and $c_0\gets 1/[1+\|Ax^0-b\|]$ \label{def:c0:p0} 

\For{$\ell \gets 1,2,\ldots$}
    \State $(x^\ell ,p^\ell,u^\ell,\varepsilon_\ell )=\SA(x^{\ell-1},p^{\ell-1}, c_{\ell-1})$ \label{dyn:stat.call}
    \State $c_{\ell}=2c_{\ell-1}$ \label{dyn:penalty.update}
    \If{$\|Ax^\ell-b\|\leq \eta$}\label{test:DA:ADMM}

        \State \Return $(\hat x,\hat p, \hat u , \hat\varepsilon,\hat c)=(x^\ell,p^\ell, u^\ell, \varepsilon_\ell, c_\ell)$

    \EndIf
\EndFor
\end{algorithmic}
\end{algorithm}

We now make some remarks about {\DA}. 
First,  similar to Algorithm~\ref{alg:static}, {\DA} requires the weakly convex parameters $\{m_t\}_{t=1}^B$ to be known (see its universal input). 
Even though these parameters do not appear in the main body of {\DA}, they are used within every call to {\SA} (see line \ref{dyn:stat.call}) to compute the  prox stepsize $\lambda$ 
(see line \ref{choose:lamt} of {\SA}). Second, similar to Algorithm~\ref{alg:static}, {\DA} requires none of the
Lipschitz constant $M_h$ as in assumption (A1), the Lipschitz constants $\{L_{>t}\}_{t=1}^{B-1}$ as in assumption (A3), or the Lipschitz constants $\{\tilde L_t\}_{t=1}^B$ mentioned in the second last paragraph of Subsection~\ref{sub:B-IPP}. Third,  even though an
initial penalty parameter $c_0$
is prescribed in line~\ref{def:c0:p0} for the sake of analysis simplification, {\DA} can be equally shown to converge for other choices of $c_0$. Fourth, it uses a ``warm-start" strategy for calling {\SA}, i.e.,  after the first call to {\SA},  the input of  any 
 {\SA} call is the output  
of the previous {\SA} call.
 
 Lemma~\ref{lemma:translate} below and Theorem~\ref{thm:static.complexity}(b) imply that each  {\SA} call in line \ref{dyn:stat.call} of {\DA} generates a 
quadruple $(x^\ell ,p^\ell,u^\ell,\varepsilon_\ell)$ satisfying the first two conditions in \eqref{eq:stationarysol}, but not necessarily the last one, i.e., the feasibility condition which is tested in line~\ref{test:DA:ADMM}. 
To
ensure that this condition is also  attained,
{\DA} doubles the penalty parameter $c_\ell$ (see its line~\ref{dyn:penalty.update}) every iteration.
Since the first inequality in \eqref{feasi:theorem:1} ensures that $\|Ax^\ell-b\| = {\cal O}(1/c_\ell)$,
this penalty update scheme guarantees that the test in line~\ref{test:DA:ADMM} will eventually be satisfied, and
 {\DA} will terminate with a $(\rho,\eta)$-stationary solution of \eqref{initial.problem}-\eqref{eq:block_structure}. 

Before describing the main result, we define the following constant, which appears in the total iteration-complexity,
\begin{equation}\label{eq:def.Gamma.dynamic}
\bar \Gamma(x^0;C,\alpha):=\overline{\phi}-\underline{\phi} +  \frac{8\zeta_2\Upsilon^2(C)}{\alpha} +2\Upsilon^2(C)\left(\frac{4\zeta_1}{\alpha}+9\right)(1+\|Ax^0-b\|),
\end{equation}
where $(\zeta_1,\zeta_2)$ is as in~\eqref{eq:sigma1&2} and $\Upsilon(C)$ is as in \eqref{eq:def.Upsilon}.

Recalling that every {\DA} iteration makes an {\SA} call, the following result 
translates the properties of {\SA} established  in Theorem~\ref{thm:static.complexity} to the context of {\DA}.

\begin{lemma}\label{lemma:translate}
Let $\ell$ be an iteration index of ${\DA}$. Then,  the following statements hold:
\begin{itemize}
    \item[(a)] the sequences $\{(x^{k},p^{k},u^k,\varepsilon_k)\}_{k=1}^\ell$ 
    and $\{c_k\}_{k=1}^{\ell}$ satisfy
\begin{equation}\label{eq:prop:input.station}
u^k \in \nabla f(x^k) + \partial_{\varepsilon_k} h(x^k)+A^*p^k   \quad \text{and}\quad \max_{1 \le k \le \ell} \|u^k\|^2+\varepsilon_k \le \rho^2,
\end{equation}
and the following bounds
\begin{equation}
\label{eq:prop:input.bounds}
\max_{1 \le k \le \ell} \|p^{k}\| \le \Upsilon(C) \quad \text{and} \quad \max_{1 \le k \le \ell}c_{k} \|Ax^{k} -b \|
\le 4\Upsilon(C);
\end{equation}

\item[(b)] the number of iterations performed
by the  {\SA} call  within the $\ell$-th iteration of {\DA} (see line~\ref{dyn:stat.call} of {\DA}) is bounded by 
\begin{equation}\label{eq:DA.ell-th.SA.call}
\left(\frac{\zeta_1+c_{\ell-1}\zeta_2}{\rho^2}\right)\bar \Gamma(x^0;C,\alpha) + 1,
\end{equation}
where $\bar \Gamma(x^0;C,\alpha)$ is as in \eqref{eq:def.Gamma.dynamic};

\item[(c)] if $c_{\ell}\geq 4\Upsilon(C)/\eta$
then $(x^\ell, p^\ell, u^\ell, \varepsilon_\ell)$ is a $(\rho,\eta)$-stationary solution of problem \eqref{initial.problem}-\eqref{eq:block_structure}. 
\end{itemize}
\end{lemma}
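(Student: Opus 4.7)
The plan is to prove all three parts jointly by induction on $\ell$, with Theorem~\ref{thm:static.complexity} supplying the per-call guarantees. Two observations frame the argument. First, {\SA} leaves its prox-stepsize input unchanged (Theorem~\ref{thm:static.complexity}(b) asserts $\hat\lambda = \lambda^0$), so the warm-start in line~\ref{dyn:stat.call} automatically preserves $\gamma^k = \gamma^0$ throughout, and the stepsize condition \eqref{def:initial:stepsize} need only be checked once. Second, the bound $\max\{\|p^{\ell-1}\|,\Upsilon(C)\}$ appearing in \eqref{feasi:theorem:1} collapses to $\Upsilon(C)$ as soon as $\|p^{\ell-1}\| \le \Upsilon(C)$, which is exactly the inductive invariant driving part (a).

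For part (a), the base case $\ell = 1$ would follow by invoking Theorem~\ref{thm:static.complexity}(b) with input $(x^0, 0, \gamma^0, c_0)$: since $p^0 = 0$, the max collapses to $\Upsilon(C)$, yielding the inclusion, $\|v^1\|^2 + \varepsilon_1 \le \rho^2$, $\|p^1\| \le \Upsilon(C)$, and $c_0\|Ax^1 - b\| \le 2\Upsilon(C)$, with the doubling $c_1 = 2c_0$ promoting the last to $c_1\|Ax^1 - b\| \le 4\Upsilon(C)$. The inductive step is a verbatim repeat: the hypothesis $\|p^{\ell-1}\| \le \Upsilon(C)$ again collapses the max, and Theorem~\ref{thm:static.complexity}(b) delivers the advertised bounds at index $\ell$. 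Part (c) is then an immediate corollary: the hypothesis $c_\ell \ge 4\Upsilon(C)/\eta$ combined with part (a)'s bound $c_\ell\|Ax^\ell-b\| \le 4\Upsilon(C)$ forces $\|Ax^\ell - b\| \le \eta$, which together with the inclusion and residual bound in \eqref{eq:prop:input.station} checks all three conditions of \eqref{eq:stationarysol}.

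For part (b), Theorem~\ref{thm:static.complexity}(a) applied to the $\ell$-th {\SA} call gives the raw iteration bound $[(\zeta_1 + c_{\ell-1}\zeta_2)/\rho^2]\,\Gamma(x^{\ell-1}, p^{\ell-1}; c_{\ell-1}) + 1$, so the real task is the uniform estimate $\Gamma(x^{\ell-1}, p^{\ell-1}; c_{\ell-1}) \le \bar\Gamma(x^0; C)$. I would split on $\ell$: for $\ell \ge 2$, feed the part (a) bounds $\|p^{\ell-1}\| \le \Upsilon(C)$ and $c_{\ell-1}\|Ax^{\ell-1}-b\|^2 = (c_{\ell-1}\|Ax^{\ell-1}-b\|)^2/c_{\ell-1} \le 16\Upsilon^2(C)/c_{\ell-1}$ into the definition \eqref{eq:def.Gamma.Static}, then exploit the monotone inequality $1/c_{\ell-1} \le 1/c_0 = 1+\|Ax^0-b\|$; for $\ell = 1$, use $p^0 = 0$ and the initialization from line~\ref{def:c0:p0}, which yields $c_0\|Ax^0-b\|^2 \le \|Ax^0-b\|$ and $1/c_0 = 1+\|Ax^0-b\|$ directly. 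In both cases, isolating the $c_{\ell-1}$-independent $(4\zeta_2/\alpha)(\|p^{\ell-1}\|^2+\Upsilon^2(C))$ contribution from the remainder recovers the two-term structure of \eqref{eq:def.Gamma.dynamic}. The main obstacle is this constant chase: one must verify that the prefactors $8\zeta_2/\alpha$ and $9$ baked into \eqref{eq:def.Gamma.dynamic} are generous enough to simultaneously absorb the extra additive $\|Ax^0-b\|$ summand that only appears at $\ell = 1$ and the $16\Upsilon^2(C)/c_{\ell-1}$ summand that only appears at $\ell \ge 2$. Modulo that bookkeeping, parts (a) and (c) reduce to a clean propagation of Theorem~\ref{thm:static.complexity}(b).
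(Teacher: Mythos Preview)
Your proposal is correct and follows essentially the same approach as the paper: induction on $\ell$ via Theorem~\ref{thm:static.complexity}, propagating the invariant $\|p^{\ell-1}\|\le\Upsilon(C)$ for part~(a) and then bounding $\Gamma(x^{\ell-1},p^{\ell-1};c_{\ell-1})\le\bar\Gamma(x^0;C)$ for part~(b). The one simplification in the paper that dissolves the constant chase you flag as the main obstacle is to avoid the $\ell=1$ versus $\ell\ge2$ split altogether by first establishing the uniform bound $c_{\ell-1}\|Ax^{\ell-1}-b\|^2\le16\Upsilon^2(C)/c_0$ for every $\ell\ge1$ (the $\ell=1$ case uses $c_0\|Ax^0-b\|\le1$ together with the implicit assumption $\Upsilon(C)\ge1$), after which $\Gamma\le\bar\Gamma$ becomes a single direct computation.
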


\begin{proof}
(a)  Using Theorem \ref{thm:static.complexity}(b) with $(y^0,q^0, c)= (x^{k-1},p^{k-1}, c_{k-1})$ and noting line~\ref{dyn:stat.call} of {\DA}, we conclude that for any $k\in\{1,\ldots,\ell\}$, the quadruple $(x^{k},p^{k},u^{k},\varepsilon_{k})$ satisfies \eqref{eq:prop:input.station} and the conditions
\begin{equation}\label{step:induction}
\|p^{k}\| \le \max\{\|p^{k-1}\|, \Upsilon(C)\}, \quad  c_{k-1} \|Ax^{k} -b \|
\le 2\max\{\|p^{k-1}\|, \Upsilon(C)\}.
\end{equation}
A simple induction argument applied to the first inequality in \eqref{step:induction}, with the fact that $p^0 = 0$, show that  the first inequality in \eqref{eq:prop:input.bounds}  holds.
The second inequality in \eqref{step:induction}, the assumption that $p^0=0$, the fact that $c_k=2c_{k-1}$ for every $k \in \{1,\ldots, \ell\}$,
and the first inequality in~\eqref{eq:prop:input.bounds}, imply that the second inequality in \eqref{eq:prop:input.bounds} also holds. 

(b) Theorem \ref{thm:static.complexity}(a) with $(y^0,q^0, c) = (x^{\ell-1},p^{\ell-1},c_{\ell-1})$ implies that the total number of iterations performed
by the  {\SA} call  within the $\ell$-th iteration of {\DA} is bounded by
\[
\left(\frac{\zeta_1+c_{\ell-1}\zeta_2}{\rho^2}\right)\Gamma(x^{\ell-1},p^{\ell-1},c_{\ell-1}; C,\alpha)+1,
\]
where $\Gamma(\cdot,\cdot,\cdot; C,\alpha)$ is as in~\eqref{eq:def.Gamma.Static}. 
Thus, to show 
\eqref{eq:DA.ell-th.SA.call}, it suffices to show that $\Gamma(x^{\ell-1},p^{\ell-1};c_{\ell-1};C,\alpha)\leq \bar \Gamma(x_0;C,\alpha)$.

Before showing the above inequality, we first show that
$c_{\ell-1}\|Ax^{\ell-1}-b\|^2\leq 16\Upsilon^2(C)/c_0$ for every index $\ell$. 
Indeed, this observation trivially holds for $\ell=1$ due to the fact that $c_0=1/(1+\|Ax^0-b\|) \le 1$ (see line~\ref{def:c0:p0} of {\DA}) and the assumption that  $\Upsilon(C)\geq 1$. Moreover, the second inequality in~\eqref{eq:prop:input.bounds} and the fact that $c_{\ell-1}\geq c_0$ show that the inequality also holds for
$\ell>1$, and thus it holds for any $\ell \ge 1$.

Using the last conclusion, the definition of $\Gamma(x^{\ell-1},p^{\ell-1},c_{\ell-1};C,\alpha)$, the fact that $c_{\ell-1}\geq c_0$, and the first inequality in \eqref{eq:prop:input.bounds}, we have
\begin{align*}
\Gamma(x^{\ell-1},p^{\ell-1},c_{\ell-1};C,\alpha)&\leq\overline{\phi}-\underline{\phi} + \frac{16\Upsilon^2(C)}{c_0} + \left[\frac{4\zeta_1}{\alpha c_{0}}+\frac{1}{c_{0}}+\frac{4\zeta_2}{\alpha}\right]\left(\|p^{\ell-1}\|^2 + \Upsilon^2(C)\right)\\
&\overset{\eqref{eq:prop:input.bounds}}\leq\overline{\phi}-\underline{\phi} + \frac{16\Upsilon^2(C)}{c_0} + \left[\frac{4\zeta_1}{\alpha c_{0}}+\frac{1}{c_{0}}+\frac{4\zeta_2}{\alpha}\right]\left(2\Upsilon^2(C)\right)\\
&=\overline{\phi}-\underline{\phi} +  \frac{8\zeta_2\Upsilon^2(C)}{\alpha} +\frac{2\Upsilon^2(C)}{c_0}\left(\frac{4\zeta_1}{\alpha}+9\right)=\bar \Gamma(x_0;C,\alpha),
\end{align*}
where the last identity follows from $c_0=1/(1+\|Ax^0-b\|)$ and the definition of $\bar \Gamma(x_0;C,\alpha)$ in \eqref{eq:def.Gamma.dynamic}.

(c) Assume that  $c_{\ell}\geq 4\Upsilon(C)/\eta$.
This assumption, the first inequality in \eqref{eq:prop:input.bounds}, and the fact that $c_{\ell}=2c_{\ell-1}$, immediately imply that $c_{\ell-1}\geq 2\max\{\|p^{\ell-1}\|,\Upsilon(C)\}/\eta$. The statement now follows from the previous observation, line~\ref{dyn:stat.call} of {\DA}, and Theorem~\ref{thm:static.complexity}(c) with $(y^0,q^0,c) = (x^{\ell-1},p^{\ell-1},c_{\ell-1})$.
\end{proof}



The next result describes the iteration-complexity of {\DA} in terms of total ADMM iterations (and hence {\SBIPP} calls within {\SA}). 

\begin{theorem}[{\DA} Complexity]\label{the:dynamic}
The following statements about {\DA} hold:
   \begin{itemize}
       \item[(a)]
       it obtains a $(\rho,\eta)$-stationary solution of \eqref{initial.problem}-\eqref{eq:block_structure} in no more than $
   \log_2 \left[1+4\Upsilon(C)/(c_0\eta) \right]+1$ calls to {\SA};
   \item[(b)]
   its total number of {\SA} iterations (and hence {\SBIPP} calls within {\SA}) is bounded by
\begin{equation*}
\frac{\zeta_2\bar \Gamma(x^0;C,\alpha)}{\rho^2}\left[ \frac{8\Upsilon(C)}{\eta} + c_0\right] +
\left[ 1+\frac{\zeta_1\bar \Gamma(x^0;C,\alpha)}{\rho^2}\right]\log_2\left( 2+ \frac{8\Upsilon(C)}{c_0\eta}\right), 
\end{equation*}
\end{itemize} 
where $(\zeta_1,\zeta_2)$, $\Upsilon(C)$   and $\bar \Gamma(x^0;C,\alpha)$ are as in~\eqref{eq:sigma1&2}, \eqref{eq:def.Gamma.Static} and \eqref{eq:def.Gamma.dynamic}, respectively, and $c_0$ is as in line \ref{def:c0:p0} of {\DA}.
\end{theorem}
\begin{proof}
(a)
Assume for the sake of contradiction that {\DA} generates an iteration index $\hat \ell$ such that $\hat \ell  > 1 + \log_2 \left[1+4\Upsilon(C)/(c_0\eta) \right] > 1$, and hence 
\[
    c_{\hat \ell-1} = c_0 2^{\hat \ell-1} > c_0\left(1+\frac{4 \Upsilon(C)}{c_0\eta} \right) > \frac{4 \Upsilon(C)}{\eta}.
\]
Using Lemma~\ref{lemma:translate}(c) with $\ell=\hat\ell-1 \ge 1$, we conclude that the quadruple $(x^{\hat\ell-1},p^{\hat\ell-1},u^{\hat\ell-1},\varepsilon_{\hat\ell-1})$ is a $(\rho,\eta)$ stationary solution of problem \eqref{initial.problem}-\eqref{eq:block_structure}, and hence satisfies $\|Ax^{\hat\ell-1}-b\| \le \eta$. In view of line~\ref{test:DA:ADMM} of {\DA}, this implies that {\DA} stops at the $(\hat \ell-1)$-th iteration, which hence contradicts the fact  that $\hat \ell$ is an iteration index. We have thus proved that (a) holds.

(b) Let $\tilde \ell$ denote the total number of {\SA} calls and observe that $\tilde{\ell}\le1+\log_2[1+4\Upsilon(C)/(c_0\eta)]$ due to (a). 
Now, using Lemma~\ref{lemma:translate}(b) and the previous observation, we have that the overall number of iterations performed by {\SA} is bounded by
\begin{align*}
\sum_{\ell=1}^{\tilde \ell}\left[ \left(\frac{\zeta_1+c_{\ell-1}\zeta_2}{\rho^2}\right)\bar \Gamma(x^0;C,\alpha) + 1 \right] &=\left[ 1+\frac{\zeta_1\bar \Gamma(x^0;C,\alpha)}{\rho^2}\right] \tilde \ell + \frac{\zeta_2 \bar \Gamma(x^0;C,\alpha)}{\rho^2}\sum_{\ell=1}^{\tilde \ell}c_{\ell-1} \\
&\leq \left[ 1+\frac{\zeta_1\bar \Gamma(x^0;C,\alpha)}{\rho^2}\right]\tilde \ell + \frac{c_0\zeta_2\bar \Gamma(x^0;C,\alpha)}{\rho^2}\left( 2^{\tilde \ell} -1\right)\\
&\leq \left[ 1+\frac{\zeta_1\bar \Gamma(x^0;C,\alpha)}{\rho^2}\right]\tilde \ell + \frac{c_0\zeta_2\bar \Gamma(x^0;C,\alpha)}{\rho^2}\left( 1+\frac{8\Upsilon(C)}{c_0\eta} \right).
\end{align*}
The result now follows by using that $\tilde{\ell}\le1+\log_2[1+4\Upsilon(C)/(c_0\eta)]$.
\end{proof}

We now make some comments about Theorem~\ref{the:dynamic}.
First, it follows from
Theorem~\ref{the:dynamic}(a) that the final penalty parameter generated by {\DA} is ${\cal O}(\eta^{-1})$.
Second, it follows from Theorem~\ref{the:dynamic}(a)  that {\DA} ends with a $(\rho,\eta)$-stationary solution of \eqref{initial.problem}-\eqref{eq:block_structure} by calling
{\SA} (and hence doubling the penalty parameter) no more than ${\cal O} (\log_2(\eta^{-1}))$ times.
Third, under the mild assumptions that $\|Ax^0-b\| = {\cal O}(1)$, \rev{$\alpha=\Omega(1)$, and $C={\cal O}(1)$,} Theorem~\ref{the:dynamic}(b) and the fact that $\zeta_2=0$ when $B=1$ (see \eqref{eq:sigma1&2}), imply that the complexity of {\DA}, in terms of the tolerances only, is:
\begin{itemize}
    \item 
    $\tilde {\cal O} (\rho^{-2}\eta^{-1})$ if $B>1$, and thus
$\tilde {\cal O}(\epsilon^{-3})$;
\item 
$\tilde {\cal O} (\rho^{-2})$ if $B=1$, and thus
$\tilde {\cal O}(\epsilon^{-2})$,
\end{itemize}
where $\epsilon:=\min\{\rho, \eta\}$.
On the other hand,
{\SA} only achieves the above complexities
with (a generally non-computable)
$c = \Theta(\eta^{-1})$ and with the condition that
$c\|Ax^0-b\|^2 =
{\cal O}(1)$ (see the first paragraph following Theorem~\ref{thm:static.complexity}), or equivalently,
$\|Ax^0-b\|= {\cal O}(\eta^{1/2})$, and hence the initial point $x^0$ being nearly feasible.
Finally, the above complexity for $B=1$ is similar to those derived for some AL methods in terms of tolerance dependencies (e.g., see \cite[Theorem 2.3(b)]{kong2023iteration}, \cite[Proposition 3.7(a)]{sujanani2023adaptive}, and  \cite{zhang2020proximal,ErrorBoundJzhang-ZQLuo2020}).
 


\section{An adaptive prox stepsize {\DA}}\label{section:Adaptive.ADMM}

This section describes an adaptive prox stepsize 
 version of {\DA}, referred to as {\DAA}, which requires no knowledge of the \textcolor{blue}{weak convexity} parameters $m_t$'s. 
It contains four subsections. Subsection~\ref{subsection:ABIPP} describes an adaptive 
prox stepsize version of {\SBIPP},
referred to as {\ABIPP}. 
Subsection~\ref{Adaptive prox stepsize ADMM} 
presents {\DAA},
whose description invokes {\ABIPP}
instead of {\SBIPP}, and shows that its iteration-complexity is similar to its
constant stepsize {\DA} analog.
Subsection \ref{sub:6.4} provides further details about the implementation of {\ABIPP}. 
 Finally, Subsection \ref{sub:section:6.3} proves two main technical results stated in Subsection~\ref{subsection:ABIPP}.

\subsection{{\ABIPP}: A variable prox stepsize version of {\SBIPP}}\label{subsection:ABIPP}

This subsection describes {\ABIPP}
and states two main results.
The first one
shows that {\SBIPP} is a special case of {\ABIPP}  when the prox stepsize input to the former is sufficiently small. The second one, which is a generalization of Proposition \ref{main:subroutine:1} to the {\ABIPP} context, states the main properties
of {\ABIPP}. The subsection ends with a proof of Proposition~\ref{main:subroutine:1}, which follows as a consequence of these two results.


{\floatname{algorithm}{Subroutine}
\renewcommand{\thealgorithm}{}
\begin{algorithm}[H]
\setstretch{0}
\caption{{\ABIPP}}\label{algo:A-BlockIPP}
\begin{algorithmic}[1]
\Statex \hskip-1.8em
\Require $(z,p,\lam, c)
\in {\cal H} \times A(\mathbb{R}^n) 
\times \R^B_{++}\times \R_{++} $
\Ensure $(z^+,v^+,\delta_+,\lam^+)\in {\cal H} \times \mathbb{R}^l 
\times \R_{++} \times \R^B_{++}$

  \For{$t=1,\ldots, B$}\label{algo:ABIPP.loop.inexact.sol}

    \State $\lambda_t^+ \gets \lambda_t$\label{lam+=lam}
    \State compute a $\left(1/8;z_t\right)$-relative  stationary solution $(z_t^+,r_t^+,\varepsilon_t^{+})$ of \begin{equation}\label{algo:BIPP.inexact.subproblem.2}
    \min_{u \in \R^{n_t}} \left  \{ \lam_t^+ \hat {\cal L}_{c}(z_{<t}^+,u,z_{>t};p)+\frac{1}{2}\|u-z_{t}\|^{2}+\lam_t^+ h_t(u) \right\} 
    \end{equation}
    \hspace{1.3em} with composite term $\lam_t^+ h_t(\cdot)$ (see Definition \ref{def:BB})  \label{output:step:2:ABIPP}
\If{$z_t^+$ does \textbf{NOT} satisfy 
\begin{equation}\label{eq:test:adap:new}
{\cal L}_c(z_{<t}^+,z_{t}, z_{>t}; p)-{\cal L}_c(z_{<t}^+,z_t^+ ,z_{>t}; p)  \ge 
 \frac{1}{8\lam_t^+} \|z_t^+-z_t\|^2 + \frac{c}{4} \|A_t (z_t^+-z_t)\|^2
 \end{equation}\hskip1.3em}\label{algo:ABIPP.if.loop}
 \State $\lambda_t^+ \gets \lambda_t^+/2$  and go to line \ref{output:step:2:ABIPP}\label{eq:test:ABIPP}
\EndIf
\EndFor
\vspace{1em}
\State $z^+\gets (z_1^+,\ldots ,z_B^+)$ and
  $\lam^+\gets (\lam_1^+,\ldots ,\lam_B^+)$\label{algo:ABIPP.primal}

\For{$t=1,\ldots ,B$}
    \State $ v_t^+\gets \nabla_t f(z_{< t}^+,z_t^+, z_{>t}^+)- \nabla_t f(z_{< t}^+,z_t^+, z_{>t}) + \dfrac{r_t^+}{\lambda_t^+}+ cA_{t}^{*}\left[\sum_{s=t+1}^{ B}A_{s} (z^+_{s}-z_s)\right] -\dfrac{1}{\lambda_t^+}( z^+_{t}-z_t)$ \label{algo.stat.vit:ADMM}\;
    \EndFor
    \State $v^+\gets (v_1^+,\ldots,v_B^+)$ and $\delta_+\gets (\varepsilon_1^+/\lam_1^+) + \ldots +  (\varepsilon_B^+/\lam_B^+)$ \label{algo:IPP.v.delta:ADMM}
    \State \Return $(z^+,v^+,\delta_+,\lam^+)$
   
\end{algorithmic}
\end{algorithm}
\renewcommand{\thealgorithm}{\arabic{algorithm}}
}

We now make two remarks about the way  {\ABIPP} computes the prox stepsize $\lam_t^+$ for every $t \in \{1,\ldots, B\}$. 
First, in contrast to {\SBIPP} which keeps $\lam_t$ constant,
{\ABIPP} adaptively searches for a suitable $\lam^+_t$ in the loop consisting of lines \ref{lam+=lam} to \ref{eq:test:ABIPP}, 
referred to as the $t$-th {\ABIPP} loop in our discussion below.

Each iteration of the $t$-th {\ABIPP} loop  halves $\lam_t^+$ and the loop
 terminates when a prox stepsize  $\lam_t^+$ satisfying \eqref{eq:test:adap:new} is generated.
 Second, if $(z^+,v^+,\delta_+, \lam^+)={\ABIPP}(z,p,\lam, c)$ and
 $\lam^+=\lam$, then
$(z^+,v^+,\delta_+) = {\SBIPP}(z,p,\lam,c)$; hence, if the initial and final prox stepsizes of {\ABIPP} are the same, then both {\SBIPP} and {\ABIPP} are equivalent.

The following result shows that {\SBIPP} is a special case of {\ABIPP}  when the prox stepsize input to the former is sufficiently small.

\begin{proposition}\label{prop:ABIPP.BIPP.same}
Assume that $1/\lam_t \in [2m_t,\infty)$ for every $t=1,\ldots, B$ and $(z^+,v^+,\delta_+)={\SBIPP}(z, p,\lambda,c)$ for some $(z,p,\lam,c)
\in {\cal H} \times A(\mathbb{R}^n) 
\times \R^B_{++}\times \R_{++}$. 
Then, the following statements hold:
\begin{itemize}
    \item[(a)] for each $t\in\{1,\ldots,B\}$, the smooth part of the objective function of the $t$-th block subproblem \eqref{algo:BIPP.inexact.subproblem.2} is $(1/2)$-strongly convex;

    \item[(b)] $(z^+,v^+,\delta_+, \lambda)={\ABIPP}(z, p,\lambda,c)$.
\end{itemize} 
\end{proposition}

Proposition~\ref{prop:ABIPP.BIPP.same}(b) shows that the reverse of the implication in the second remark preceding Proposition~\ref{prop:ABIPP.BIPP.same}  holds if $\lam$ input to {\SBIPP} is sufficiently small.

The following result,
which is a more general version of Proposition \ref{main:subroutine:1},
describes the main properties of the quadruple
$(z^+,v^+,\delta_+,\lam^+)$ output 
by {\ABIPP}.

\begin{proposition}\label{prop:ABIPP.output}
For a given $(z,p,\lam,c)
\in {\cal H} \times A(\mathbb{R}^n) 
\times \R^B_{++}\times \R_{++}$, the following statements about {\ABIPP} with input $(z,p,\lam, c)$ hold:

\begin{itemize}
\item[(a)] 
 for every $t \in \{1,\ldots,B\}$,  the $t$-th {\ABIPP} loop stops  in at most $1+\lceil \log_2(1+2m_t\lam_t) \rceil$ iterations, and as a consequence {\ABIPP} terminates; moreover, the prox stepsize $\lam^+$
output by {\ABIPP} satisfies
\begin{equation}\label{eq:ABIPP:output.lambda.bound.2}
\lam_t^+ \le \lam_t, \quad \max \left\{ \frac{1}{\lambda_t}, 4 m_t \right\} = 
 \max \left\{ \frac{1}{\lambda_t^+}, 4 m_t \right\}
  \quad \forall t=1,\ldots,B;
\end{equation}

\item[(b)] if $(z^+,v^+,\delta_+, \lam^+)={\ABIPP}(z,p,\lam, c)$, then  inclusion \eqref{prop.B-IPP.inclusion} holds and
\begin{gather}
\begin{gathered}\label{lemma:norm:residual:Ine.2}
\|v^+\|^2 +\delta_+  
\le \left[1+\frac{50}{\min(\lam^+)}+48 L^2\max(\lam^+) + c \zeta_2\right]
  \Big[ {\cal L}_c(z;p) - {\cal L}_c(z^+;p)\Big],
\end{gathered}
\end{gather}
where $\chitwo$ is as in  \eqref{eq:sigma1&2}
and $L$ is as in \eqref{eq:block_norm}.
\end{itemize}
\end{proposition}


We now use Propositions \ref{prop:ABIPP.BIPP.same} and \ref{prop:ABIPP.output} 
to prove Proposition \ref{main:subroutine:1}.

\vspace{.4em}

\noindent
{\bf Proof of Proposition \ref{main:subroutine:1}:}
    Assume that $(z^+,v^+,\delta_+)={\SBIPP}(z, p,\lam,c)$ and $\lambda$ is chosen as in \eqref{choo:lamt}, and hence that  $1/\lambda_t \in[2m_t,\infty)$ for $t=1,\ldots,B$.
    It then follows from Proposition~\ref{prop:ABIPP.BIPP.same}(a) that
    Proposition \ref{main:subroutine:1}(a) holds.
    Moreover, Proposition~\ref{prop:ABIPP.BIPP.same}(b) implies that $(z^+,v^+,\delta_+,\lambda)={\ABIPP}(z, p,\lam,c)$, and hence that the assumption of Proposition \ref{prop:ABIPP.output} holds with $\lam^+=\lam$.
    The conclusion of Proposition~\ref{prop:ABIPP.output}(b) then implies that inclusion \eqref{prop.B-IPP.inclusion}, and inequality \eqref{lemma:norm:residual:Ine.2} with $\lambda^+$ replaced by $\lambda$ hold, i.e.,
\[
\|v^+\|^2 +\delta_+  
\le \left[1+\frac{50}{\min(\lam)}+48 L^2\max(\lam) + c \zeta_2\right]
  \Big[ {\cal L}_c(z;p) - {\cal L}_c(z^+;p)\Big].
\] 
Using the above inequality, the definition of $\chione$, and the fact that \eqref{choo:lamt} implies that
\begin{equation*} 
\frac{1}{\min(\lam)} = 
\max_{1\leq t\leq B} \frac{1}{\lam_t}\overset{\eqref{choo:lamt}}= 2 \max \left\{1,\max_{1\leq t\leq B}m_t \right\}, \qquad
\max(\lam)  = \max_{1\leq t\leq B} \lam_t \overset{\eqref{choo:lamt}}\leq \frac{1}{2},
\end{equation*}
we then conclude that \eqref{lemma:norm:residual:Ine} also holds.
We have thus proved that  Proposition~\ref{main:subroutine:1}(b) holds.
\qedflush

\subsection{An adaptive prox stepsize {\DA}}
\label{Adaptive prox stepsize ADMM}

This subsection presents {\DAA}, an adaptive prox stepsize analog of {\DA}, and argues that the complexity results  for {\DA} derived in Section \ref{section:VP.ADMM} can be naturally extended to {\DAA},  using the observations made in the paragraph preceding Lemma~\ref{lemma:Ti.expression}.

We start by stating {\DAA}, the adaptive prox stepsize analog of {\DA}.
In contrast to {\DA}, which uses {\SA} as a subroutine, the extended description below incorporates all the details of the adaptive {\SA} analog into a single loop, i.e., the second one.

\setcounter{algorithm}{2}
\begin{algorithm}[H]
\caption{{\DAA} ($m=(m_1,\ldots,m_B)$ is {\bf NOT} required)}\label{alg:adaptiveFINAL}
\begin{algorithmic}[1]
\Statex \hskip-1.8em
\textbf{Universal Input:}  $(\rho,\eta) \in \R^2_{++}$, \rev{$\alpha> 0$}, \rev{$C \ge \rho$}
\Require $x^0
\in {\cal H}$ and $\gamma^0\in \R^B_{++}$
\Ensure $(\hat x, \hat p, \hat u, \hat \varepsilon, \hat c)$ that satisfies the conclusions of Theorem~\ref{the:dynamic.adaptive}.
\vspace{1em}
\State $p^0=(p^0_1,\ldots,p^0_B) \gets (0,\ldots,0)$, $c_0 \gets 1/[1+\|Ay^0-b\|]$ \label{c0:ADMM}
\vspace{1em}
\For{$\ell\gets 1,2,\ldots$}\label{begin:outer:loop}
\State $T_0\gets 0$, $k\gets 0$, 
$c\gets c_{\ell-1}$, $(y^0,q^0,\lam^0, c)\gets (x^{\ell-1}, p^{\ell-1}, \gamma^{\ell-1}, c_{\ell-1})$
\For{$i\gets 1,2,\ldots$}\label{begin:inner:loop} 
        \State $(y^i,v^i,\delta_i,\lam^{i})= \ABIPP(y^{i-1},q^{i-1},\lam^{i-1},c)$    \label{call:ABIPP:}   
        \If{$\|v^i\|^2 +\delta_i  \le \rho^2$}
            \State $k\gets k+1$, \ $i_k\gets i$ 
            \State  $q^i \gets  q^{i-1} + c (A y^i - b) $ \label{algo.adapt.admm.update.multiplier}
            \State $(x^\ell,p^\ell,u^\ell,  \varepsilon_\ell,\gamma^{\ell})=(y^i,q^i,v^i,\delta_i,\lambda^i)$ and \textbf{go to line \ref{update:c:A:ADMM}}
        \EndIf
\vspace{1em}
    \State $T_i= {\cal L}_c( y^{i-1};q^{i-1}) - {\cal L}_c(y^i;q^{i-1}) + T_{i-1}$\label{def:T:i:ADMM}
    \If{$\|v^i\|^2 +\delta_i\le C^2$ and $\dfrac{\rho^2}{\alpha(k+1)} \ge \dfrac{T_i}{i}  $}\label{stat:begin.test.cond:ADMM}
        \State $k\gets k+1$, \ $i_k\gets i$
        \State  $q^{i} \gets  q^{i-1} + c (A y^i - b) $  \label{def:qi:chi:ADMM}
    \Else
        \State $q^i\gets q^{i-1}$ \label{sa.qi.update:ADMM}
    \EndIf
\EndFor
\vspace{1em}
\State $c_{\ell} \gets 2c_{\ell-1}$\label{update:c:A:ADMM}
\If{$\|Ax^\ell-b\|\leq \eta$}
\State \Return $(\hat x, \hat p, \hat u, \hat \varepsilon, \hat c)=(x^\ell,p^\ell,u^\ell,\varepsilon_\ell, c_{\ell})$\label{end:outer:loop}
\EndIf
\EndFor
\end{algorithmic}
\end{algorithm}

We now make some comments about {\DAA}. First,  {\DAA} consists of two main loops. The outer one, indexed by $\ell$, starts in line \ref{begin:outer:loop} and ends in line \ref{end:outer:loop}. The inner one, indexed by $i$, starts in line \ref{begin:inner:loop}
and ends in line \ref{sa.qi.update:ADMM}. 
The iterations of an inner loop,
referred simply to as an \textit{inner iteration}, \textcolor{blue}{execute} an adaptive variant of {\SA}, with {\SBIPP} replaced by
its adaptive counterpart {\ABIPP} (see line \ref{call:ABIPP:} of {\DAA}).
Second, an iteration of the outer loop, referred to as an \textit{outer iteration}, can be viewed as an adaptive version of a {\DA} iteration. Third, in contrast to {\DA}, which uses the constant prox stepsize \eqref{choo:lamt}, {\DAA} chooses the sequence of inner and outer prox stepsizes $\{\lam^i\}$ and $\{\gamma^\ell\}$ adaptively due to its call to {\ABIPP}  in line \ref{call:ABIPP:}.
Thus, {\DAA} is an ADMM variant which is fully adaptive, i.e., adaptive relative to all instance parameters associated with problem~\eqref{initial.problem} and hence fulfills the list of attributes listed in the ``Contributions" part of the Introduction.

We next discuss how the iteration-complexity of {\DAA} can be established by following an argument close to, but slightly different than, the one used for {\SA}/{\DA}. We start with some remarks about the sequence of inner prox stepsizes $
\{\lam^i = (\lam_1^i,\ldots,\lam_B^i) \}$ generated within an arbitrary inner loop of  {\DAA}. First, it is straightforward to observe that line \ref{call:ABIPP:} of {\DAA} and Proposition~\ref{prop:ABIPP.output} with $(z,p,\lam,c)=(y^{i-1},q^{i-1},\lam^{i-1},c)$, imply that,
for every inner iteration $i$ of any inner loop, we have
\begin{equation}\label{cond:lam:sec:6.2}
\lam_t^i \le \lam_t^{i-1}, \quad \max\left\{ \frac{1}{\lam_t^{i}},4m_t\right\} = \max\left\{ \frac{1}{\lam_t^{i-1}},4m_t\right\} \quad \forall t \in \{1,\ldots,B\},
\end{equation}
\begin{equation}\label{lemma:inclusion:residual:Ine.2:sec:6.2}
v^i  \in \nabla f (y^i)+ \partial_{\delta_i} h(y^i) +
A^*[q^{i-1}+c(Ay^i-b)],
\end{equation}
\begin{align}\label{lemma:norm:residual:Ine.2:sec:6.2:2}
\|v^i\|^2 +\delta_i  
\le \left[1+\frac{50}{\min(\lam^i)}+48 L^2\max(\lam^i) + c \zeta_2\right]
  \Big[ {\cal L}_c(y^{i-1};q^{i-1}) - {\cal L}_c(y^i;q^{i-1})\Big],
\end{align}
where $\zeta_2$ is as in \eqref{eq:sigma1&2}.
Relation  \eqref{cond:lam:sec:6.2} immediately 
implies that
\begin{equation}\label{cond:gamma:lambda}
\lam_t^i \le \gamma_t^{0}, \quad  \max\left\{ \frac{1}{\lam_t^{i}},4m_t\right\} = \max\left\{ \frac{1}{\gamma^{0}_t},4m_t\right\} \quad \forall t \in \{1,\ldots,B\}.
\end{equation}

Now, \eqref{lemma:norm:residual:Ine.2:sec:6.2:2} and \eqref{cond:gamma:lambda} imply that every  inner iteration $i$ of any inner loop satisfies
\begin{align}\label{lemma:norm:residual:Ine.2:sec:6.2:3}
\|v^i\|^2 +\delta_i  
&\le \left[ \chione + c \zeta_2\right]
  \Big[ {\cal L}_c(y^{i-1};q^{i-1}) - {\cal L}_c(y^i;q^{i-1})\Big],
\end{align}
where the pair $(\zeta_1,\zeta_2)$ in this subsection is defined as
\begin{equation}\label{eq:new.zetas}
\chione = 1+50 \bar \chi +48 L^2\max(\gamma^{0}), \quad
\zeta_2 = 24(B-1)\|A\|^2_{\dagger},
\end{equation}
and
\[
\bar \chi :=\max_{1 \le t \le B} \left( \max\left\{ \frac{1}{\gamma^{0}_t},4m_t\right\} \right).
\]

Relations \eqref{lemma:inclusion:residual:Ine.2:sec:6.2} and \eqref{lemma:norm:residual:Ine.2:sec:6.2:3} then imply that condition {\bf C1} in Section \ref{subsubsec:subroutine} holds with $(\zeta_1,\zeta_2)$ as in \eqref{eq:new.zetas}.
Thus, as observed in the paragraph containing condition {\bf C1}, all the results derived in Sections \ref{subsubsec:subroutine} and \ref{section:VP.ADMM}, except for Lemma~\ref{lemma:SADMM.BIPP}, hold.
In particular, Theorem~\ref{the:dynamic} translated to the context of {\DAA}
becomes as follows:

\begin{theorem}[{\DAA} Complexity]\label{the:dynamic.adaptive}
The following statements about {\DAA} hold:
   \begin{itemize}
       \item[(a)]
       it obtains a $(\rho,\eta)$-stationary solution of \eqref{initial.problem}-\eqref{eq:block_structure} in no more than $
   \log_2 \left[1+4\Upsilon(C)/(c_0\eta) \right]+1$
   outer iterations;
   \item[(b)]
   the total number of inner iterations performed by it is bounded by
\begin{equation*}
\frac{\zeta_2\bar \Gamma(x^0;C,\alpha)}{\rho^2}\left[ \frac{8\Upsilon(C)}{\eta} + c_0\right] +
\left[ 1+\frac{\zeta_1\bar \Gamma(x^0;C,\alpha)}{\rho^2}\right]\log_2\left( 2+ \frac{8\Upsilon(C)}{c_0\eta}\right),
\end{equation*}
\end{itemize} 
where $(\zeta_1,\zeta_2)$, $\Upsilon(C)$   and $\bar \Gamma(x^0;C,\alpha)$ are as in~\eqref{eq:new.zetas}, \eqref{eq:def.Upsilon} and \eqref{eq:def.Gamma.dynamic}, respectively, and $c_0$ is as in line \ref{c0:ADMM} of {\DAA}.
\end{theorem}

\rev{Under the mild assumptions that $\|Ax^0-b\| = {\cal O}(1)$, \rev{$\alpha=\Omega(1)$, and $C={\cal O}(1)$,} Theorem~\ref{the:dynamic.adaptive}(b) implies that the complexity of {\DAA}, in terms of the tolerances only, is $\tilde {\cal O} (\epsilon^{-3})$ when $B>1$, and $\tilde {\cal O} (\epsilon^{-2})$ when $B=1$,
where $\epsilon:=\min\{\rho, \eta\}$.}

\subsection{Further implementation issues of \textcolor{blue}{an} {\ABIPP} loop} \label{sub:6.4}

This subsection addresses some aspects
related to the computation of a $(1/8,z_t)$-relative  stationary solution of the $t$-th block subproblem \eqref{algo:BIPP.inexact.subproblem.2}
in line \ref{output:step:2:ABIPP} of {\ABIPP}.

We first recall that throughout our presentation in Section \ref{section:Adaptive.ADMM}, we have assumed that a $(1/8,z_t)$-relative  stationary solution of  \eqref{algo:BIPP.inexact.subproblem.2}
can be obtained every time line \ref{output:step:2:ABIPP} of {\ABIPP} is executed. 
Such an assumption is reasonable if an exact solution $z_t^+$ of \eqref{algo:BIPP.inexact.subproblem.2}
can be computed in closed form since then  
$(z_t^+,v_t^+,\varepsilon_t^+)=(z^+_t,0,0)$ is a $(1/8;z_t)$-relative  stationary solution of \eqref{algo:BIPP.inexact.subproblem.2}. 

We now discuss the issues of finding a
$(1/8,z_t)$-relative stationary solution of \eqref{algo:BIPP.inexact.subproblem.2}  using the {\ADAP} described in Appendix \ref{sec:acg}
with input $(\mu_0,M_0) = (1/2,\lam_tc\|A_t\|^2)$, and hence with the same input as in the discussion on the second last paragraph of Subsection~\ref{sub:B-IPP}.
Recall that in that paragraph,
as well as in here, we assume that $\nabla_{t} f (x_1,\ldots,x_B)$ is
$\tilde L_{t}$-Lipschitz continuous with respect to  $x_t$.
If $\lam_t^+ >1/(2m_t)$ in line \ref{output:step:2:ABIPP}, 
then 
{\ADAP}
may not be able to find the required relative  stationary solution.
This is because
the smooth part of the objective function of \eqref{algo:BIPP.inexact.subproblem.2} with the above input
is not necessarily $(1/2)$-strongly convex (see statement (a) of Lemma~\ref{Lemma:ABIPP} below), which can cause failure of ADAP-FISTA (see Proposition \ref{pro.inexact.sol}(c) with $\mu_0=1/2$).
Nevertheless, regardless of whether  {\ADAP} succeeds or fails, a similar reasoning as in the second last paragraph of Subsection~\ref{sub:B-IPP} shows that
it terminates in ${\cal O} ( [\lam_t^+(\tilde L_t + c \|A_t\|^2 )]^{1/2})$ iterations.
Moreover, failure of {\ADAP}
signals that the current prox stepsize $\lam_t^+$ is too large.
In this case,
$\lam_t^+$ should be halved regardless of whether \eqref{eq:test:adap:new} is satisfied or not. An argument close to the one used in the proof of Proposition~\ref{prop:ABIPP.output}(a), which not only uses Lemma \ref{Lemma:ABIPP}(b) but also Lemma \ref{Lemma:ABIPP}(a), shows that this slightly modified version of {\ABIPP}  terminates in at most $1+\lceil \log_2(1+4m_t\lam_t) \rceil$ loop iterations.

\subsection{Proofs of Propositions~\ref{prop:ABIPP.BIPP.same} and \ref{prop:ABIPP.output} }\label{sub:section:6.3}

This subsection contains the proof of Propositions~\ref{prop:ABIPP.BIPP.same} and \ref{prop:ABIPP.output}.
First, we present a technical result, which states that if the $t$-th prox stepsize input for {\ABIPP} is sufficiently small, 
then its $t$-th loop terminates.

The next result uses the fact that line~\ref{output:step:2:ABIPP} of {\ABIPP}, together with Definition~\ref{def:BB}, generates a triple $(z_t^+,r_t^+,\varepsilon_t^{+}) \in {\cal H} \times \mathbb{R}^l 
\times \R_{+}$ such that
\begin{gather}\label{eq:ref:lemma:A1}
\begin{gathered}
r_t^+\in \nabla\left[\lam_t^+ {\cal \hat L}_{c}(z_{<t}^+,\cdot ,z_{>t}; p)+\frac{1}{2}\|\cdot -z_{t}\|^2\right](z_t^+)+\partial_{\varepsilon_t^+}(\lam_t^+ h_t)(z_t^+), \\
\|r_t^+\|^2 + 2 \varepsilon_t^+ \leq \frac{1}{8}\| z_t^+-z_t\|^2,
\end{gathered}
\end{gather}
where ${\cal \hat L}_{c}(\cdot; p)$ is  as in \eqref{def:smooth:ALM}.

\begin{lemma}\label{Lemma:ABIPP} 
Let $t \in \{1,\ldots,B\}$ be given and assume that
the prox stepsize $\lam_t^+$ at a certain iteration of the $t$-th {\ABIPP} loop satisfies
$1/\lam_t^+ \in [2m_t,\infty)$.
Then, the following statements hold for this iteration of the $t$-th loop:
\begin{itemize}
    \item[(a)] the smooth part of the objective function of the $t$-th block subproblem \eqref{algo:BIPP.inexact.subproblem.2} is $(1/2)$-strongly convex;
 
   \item[(b)] the $t$-th loop ends at this iteration.
\end{itemize}

\end{lemma}

\begin{proof}
(a)  
The assumption that $1/\lam_t^+ \in [2m_t,\infty)$ implies that the matrix $B_t : = (1-\lam_t^+ m_t)I+\lam_t^+ cA_t^*A_t$ is clearly positive definite, and hence defines the norm $\|\cdot\|_{B_t}$ whose square is
\begin{equation}\label{def:norm:Bt:Ine:AB}
\|\cdot\|_{B_t}^2:=\langle\;\cdot\;, \, B_t(\cdot) \, \rangle 
\ge
\lam_t^+ c \|A_t (\cdot)\|^2 + \frac12 \|\cdot\|^2.
\end{equation}
Now, let $\Psi_t(\cdot)$ denote the smooth part of the objective function of the $t$-th block subproblem \eqref{algo:BIPP.inexact.subproblem.2}, i.e.,
\begin{equation}\label{def:Psi:t}
\Psi_t(\cdot):=\lam_t^+ \hat {\cal L}_{c}(z_{<t}^{+},\cdot,z_{>t};p)+\frac{1}{2}\|\cdot-z_{t}\|^{2}
\end{equation}
where ${\cal \hat L}_{c}(\cdot\,;p)$ is as in \eqref{def:smooth:ALM}.
Moreover, using assumption (A2), the above definitions of $B_t$ and the norm $\|\cdot\|_{B_t}$, the definitions of ${\cal \hat L}_{c}(\cdot\,;\cdot)$ and  $\Psi_t(\cdot)$ in \eqref{def:smooth:ALM}
 and \eqref{def:Psi:t}, respectively, we easily see that the function 
$\Psi_t(\cdot) - \frac12 \|\cdot\|^2_{B_t}$ is  convex, and hence $\Psi_t(\cdot)$ is $(1/2)$-strongly convex due to the inequality in \eqref{def:norm:Bt:Ine:AB}.

(b) 
It suffices to show that  \eqref{eq:test:adap:new} holds at this loop iteration.
Due to \eqref{def:Psi:t} and the inclusion in \eqref{eq:ref:lemma:A1}, the quadruple $(z_t^+,r_t^+, \varepsilon_t^+,\lam_t^+)$ satisfies
\begin{align*}
r_t^+\overset{\eqref{eq:ref:lemma:A1}}\in  \nabla \Psi_t(z_t^+) 
         + \partial_{\varepsilon_t^+}(\lam_t^+ h_t) (z_t^+ )
         \subset \partial_{\varepsilon_t^+}
         \left[ \Psi_t + \lam_t^+ h_t\right](z_t^+)
        \end{align*}
where the set inclusion is due to
\cite[Thm. 3.1.1 of Ch.\ XI]{lemarechal1993}, and the fact that $\Psi_t(\cdot)$ and $\lam_t^+ h_t(\cdot )$ are convex functions.
Since $\Psi_t(\cdot) - \frac12 \|\cdot\|^2_{B_t}$ is  convex,
it follows from Lemma \ref{conv:result}  with $\psi=\Psi_t+\lambda_t^+h_t$
   , $(\xi,\tau, Q)=(1, 1, B_t)$, $(u,y,v) = (z_{t},z_t^+,r_t^+)$, and $\eta=\varepsilon_t^+$, that
\begin{align*}
    &\lam_t^+ {\cal L}_c(z_{<t}^+,z_{t}, z_{>t}; p)-\left[\lam_t^+ {\cal L}_c(z_{<t}^+,z_t^+, z_{>t}; p) +\frac{1}{2}\|z_t^+-z_t\|^2 \right] \\
    &= \left[\Psi_t(z_{<t}^+,z_t, z_{>t}; p)+\lambda_t^+ h_t(z_t)\right] - \left[ \Psi_t(z_{<t}^+,z_t^+, z_{>t}; p)+\lambda_t^+ h_t(z_t^+) \right]  \\
    &\ge  \frac{1}{4}\|z_t^+-z_t\|_{B_t}^2 -2\varepsilon_t^++\langle r_t^+, z_t-z_t^+\rangle \overset{\eqref{def:norm:Bt:Ine:AB}}\ge \frac{\lam_t^+ c}{4}\|A_t(z_t^+-z_t)\|^2 -2\varepsilon_t^+ +\langle r_t^+, z_t-z_t^+\rangle,
\end{align*}
where the first equality follows from
\eqref{DP:AL:F}, \eqref{def:smooth:ALM},  and \eqref{def:Psi:t},
the first inequality is due to Lemma~\ref{conv:result}, and
the last inequality is due to \eqref{def:norm:Bt:Ine:AB}.
Using the previous inequality, the inequality $a b \le (a^2 + b^2)/2$ with $(a,b)=(\sqrt{2}\|r_t^+\|, (1/\sqrt{2})\|z_t^+-z_t\|)$, and the inequality  in \eqref{eq:ref:lemma:A1}, we conclude that
\begin{align}
\label{tes:SA:line:search:Ine}
 {\cal L}_c(z_{<t}^+,z_{t}, z_{>t}; p)&-{\cal L}_c(z_{<t}^+,z_t^+, ,z_{>t}; p) \nonumber\\ 
&\ge  \frac{1}{2\lam_t^+}\|z_t^+-z_t\|^2 + \frac{c}{4} \|A_t (z_t^+-z_t)\|^2  -\frac{1}{\lam_t^+}\left(\| \sqrt{2} r_t^+\| \left\|\frac1{\sqrt{2}} (z_t^+-z_t)\right\|+ 2\varepsilon_t^+\right) \nonumber\\
&\ge \frac{1}{2\lam_t^+}\|z_t^+-z_t\|^2 + \frac{c}{4} \|A_t (z_t^+-z_t)\|^2  -\frac{1}{\lam_t^+} \left(  \|r_t^+\|^2 + \frac14 \|z_t^+-z_t\|^2+ 2\varepsilon_t^+ \right) \nonumber\\
&\overset{\eqref{eq:ref:lemma:A1}}\ge \frac{1}{2\lam_t^+}\|z_t^+-z_t\|^2 + \frac{c}{4} \|A_t (z_t^+-z_t)\|^2  -\frac{1}{\lam_t^+} \left(\frac14 +\frac18  \right) \|z_t^+-z_t\|^2 \nonumber\\
&= \frac{1}{8\lam_t^+} \|z_t^+-z_t\|^2 + \frac{c}{4} \|A_t(z_t^+-z_t)\|^2, \nonumber
\end{align}
and hence that \eqref{eq:test:adap:new} holds.  By the logic of {\ABIPP}, it then follows that the $t$-th {\ABIPP} loop terminates at the current loop iteration with $\lam_t^+$ being the final $t$-th stepsize output by {\ABIPP}. We have thus proved that (b) holds.
\end{proof}

\vspace{1em}

It follows from Lemma~\ref{Lemma:ABIPP}(b) that, if function $f$ restricted to its $t$-th block variable is convex, i.e., $m_t=0$, then the $t$-th {\ABIPP} loop terminates in one iteration with $\lam_t^+=\lam_t$. Hence, {\ABIPP} does not update $\lam_t$ when $m_t=0$.

Proposition~\ref{prop:ABIPP.BIPP.same} now follows immediately from Lemma~\ref{Lemma:ABIPP}. Indeed, both of its statements follow from its assumptions, line \ref{lam+=lam} of {\ABIPP}, and Lemma \ref{Lemma:ABIPP} with $\lambda_t^+=\lambda_t$ for every $t\in \{1,\ldots,B\}$.

We are now ready to prove Proposition~\ref{prop:ABIPP.output}

\noindent
{\bf Proof of Proposition \ref{prop:ABIPP.output}:}
(a) Let $t \in \{1,\ldots, B\}$ be given. 
Recall from Lemma~\ref{Lemma:ABIPP}(b) that if  $1/\lambda_t^+\in[2m_t,\infty)$ 
at some iteration of the $t$-th {\ABIPP} loop, then the loop must terminate at this iteration.
Using this observation and the fact that $1/\lam_t^+$ is doubled every time line \ref{eq:test:ABIPP} of {\ABIPP} is executed, we easily see that the $t$-th {\ABIPP} loop stops in at most $1+\lceil \log_2(1+2m_t\lam_t) \rceil$ iterations, the inequality in \eqref{eq:ABIPP:output.lambda.bound.2} holds, and \begin{equation*}
      \frac1{\lam_t^+ } \le \max \left\{ \frac1{\lam_t} , 4m_t\right\}.
\end{equation*}
The identity in \eqref{eq:ABIPP:output.lambda.bound.2} now follows immediately from 
the inequality $1/\lam_t \le 1/\lam_t^+$ and the one above.


(b) We first prove the inclusion in \eqref{prop.B-IPP.inclusion}. 
Using the inclusion in
\eqref{eq:ref:lemma:A1}, the definition of ${\cal \hat L}_{c}(\cdot\,;p)$ in \eqref{def:smooth:ALM},
and 
relation \eqref{prop1:subd} with $(\varepsilon, \beta)=(\varepsilon_t^+, \lambda_t^+)$,
we easily see that  
\begin{align}\label{eq:COA1}
\frac{r_t^+}{\lambda_t^+}
&\overset{\eqref{eq:ref:lemma:A1}}\in \nabla_{t}f(z_{< t}^+,z_t^+, z_{>t})+A_{t}^{*}\left[ p+c[A(z_{< t}^+,z_t^+,z_{>t})-b]\right]+\frac{1}{\lambda_t^+}(z_t^+-z_t)+\partial_{(\varepsilon_t^+/\lambda_t^+)}h_t(z_t^+)\nonumber \\
& =\nabla_{t}f(z_{< t}^+,z_t^+, z_{>t})+A_{t}^{*}\left(p+c(Az^+-b) - c\sum_{s=t+1}^{ B}A_s (z_s^+-z_s)\right)+\frac{1}{\lambda_t^+}(z_t^+-z_t)+\partial_{(\varepsilon_t^+/\lambda_t^+)}h_t(z_t^+), \nonumber
\end{align}
for every $t\in \{1,\ldots,B\}$. 
Rearranging the above inclusion and using the definition of $v_t^+$ in line \ref{algo.stat.vit:ADMM} of {\ABIPP}, we have
\[
v_t^+ \in \nabla_{t}f(z^+)+\partial_{(\varepsilon_t^+/\lambda_t^+)}h_t(z_t^+)+A_t^*\left[p+c(Az^+-b)\right] \quad \forall t\in \{1,\ldots,B\}.
\]
Moreover, we have
that $\partial_{(\varepsilon_1^+/\lambda_1^+)}h_1(z_1^+) \times \ldots \times
\partial_{(\varepsilon_B^+/\lambda_B^+)}h_B(z_B^+)  \subseteq \partial_{\delta_+}h(z^+)$,
due to \eqref{prop2:subd} and the definition of $\delta_+$ in line \ref{algo:IPP.v.delta:ADMM} of {\ABIPP}. 
The inclusion in \eqref{prop.B-IPP.inclusion} now follows from the last two conclusions and the definition of $v^+$ in line \ref{algo:IPP.v.delta:ADMM} of {\ABIPP}.

We now prove the inequality in \eqref{lemma:norm:residual:Ine.2}.
Lemma \ref{Lemma:ABIPP}(b) and the logic of {\ABIPP} imply that
the $t$-th {\ABIPP} loop 
terminates with 
a quadruple 
$(z_t^+,r_t^+,\varepsilon_t^{+},\lambda_t^+)$
satisfying  \eqref{eq:test:adap:new}.
Adding \eqref{eq:test:adap:new} from $t=1$ to $t=B$, we have 
\begin{equation}\label{eq:adapt.Delta.Lc.ineq}
\Delta {\cal L}_c := {\cal L}_c(z;p) - {\cal L}_c(z^+;p) \ge \frac{1}{8}\sum_{t =1}^B \frac{\|z_t^+-z_t\|^2}{\lam_t^+} + \frac{c}{4}\sum_{t =1}^B \|A_t (z_t^+-z_t)\|^2.
\end{equation}
Now, using \eqref{eq:ref:lemma:A1}, \eqref{eq:adapt.Delta.Lc.ineq},  and that $1/\lam_t^+\leq 1/\min(\lambda^+)$ (see \eqref{def.lambda.min.max}), we have
\begin{align*}
\sum_{t=1}^B \left( 2\frac{\|r_t^+\|^2}{(\lam_t^+)^2}  + \frac{\varepsilon_t^+}{\lam_t^+} \right)
&\le \left[\frac{2}{\min(\lambda^+)}+1\right] \sum_{t=1}^B \left( \frac{\|r_t^+\|^2 + \varepsilon_t^+}{\lam_t^+}\right)\\
&\overset{\eqref{eq:ref:lemma:A1}}\le \left[\frac{2}{\min(\lambda^+)}+1\right]  \sum_{t=1}^B \left( \frac{\|z_t^+-z_t\|^2}{8\lam_t^+}  \right) \overset{\eqref{eq:adapt.Delta.Lc.ineq}}\le  \left[\frac{2}{\min(\lambda^+)}+1\right] \Delta {\cal L}_c.
\end{align*}
Defining $D_{t}:=\|v_t^+-r_t^+/\lam_t^+\|^2$, using the previous inequality, the definition of $\delta_+$ (see line \ref{algo:IPP.v.delta:ADMM} of {\ABIPP}), and that $\|a+b\|^2\leq 2\|a\|^2+2\|b\|^2$, for any $a,b\in \R^n$, we have
\begin{align}\label{eq:suff.decr.inter}
\|v^+\|^2 &+\delta_+  =
\sum_{t=1}^B \left( \|v_t^+\|^2 + \frac{\varepsilon_t^+}{\lam_t^+}  \right) 
\le 
\sum_{t=1}^B \left( 2 D_{t} + 2\frac{\|r_t^+\|^2}{(\lam_t^+)^2}  + \frac{\varepsilon_t^+}{\lam_t^+}
\right) \leq 2 \sum_{t=1}^B  D_{t} +  \left[\frac{2}{\min(\lambda^+)}+1\right] \Delta {\cal L}_c.
\end{align}
We will now bound $\sum_{t=1}^BD_{t}$. 
Using \eqref{eq:adapt.Delta.Lc.ineq} and the inequality $\lam_s^+ \leq \max(\lambda^+)$,
 we have
\begin{align}\label{eq:bound:vi:interm1}
\|z_{>t}^+-z_{>t}\|^2 
=
\sum_{s =t+1}^B \|z_s^+-z_s\|^2
\le
\left( 
\max(\lambda^+) \sum_{s =t+1}^B \frac{\|z_s^+-z_s\|^2}{\lam_s^+}
\right)   
\overset{\eqref{eq:adapt.Delta.Lc.ineq}}\le 8 \max(\lambda^+) \Delta {\cal L}_c.
\end{align}
Moreover, the definitions of $D_{t}$ given above and $v_t^+$ in line \ref{algo.stat.vit:ADMM} of {\ABIPP}, the Cauchy-Schwarz inequality, assumption (A3), and relations \eqref{eq:adapt.Delta.Lc.ineq} and \eqref{eq:bound:vi:interm1}, imply that
\begin{align}\label{eq:bound:vi:first:Ine}
D_{t} &=\left\|v_{t}^+-\frac{r_t^+}{\lam_t^+}\right\|^2  =\left\|\nabla_{t}f(z_{\leq t}^+, z_{>t}^+)-\nabla_{t}f(z_{\leq t}^+, z_{>t})+cA^*_{t}\left[\sum_{s=t+1}^{ B}A_{s}(z_s^+-z_s)\right]-\frac{(z_t^+-z_t)}{\lambda_t^+}\right\|^2\nonumber\\
&\leq 3\left\{ \|\nabla_{t}f(z_{\leq t}^+, z_{>t}^+)-\nabla_{t}f(z_{\leq t}^+, z_{>t})\|^2+ \left[ c\|A_t\| \sum_{s=t+1}^{B} \|A_{s} (z_s^+-z_s) \| \right]^2 +\frac{\|z_t^+-z_t\|^2}{(\lambda_t^+)^2}\right\}\nonumber\\
&\overset{\eqref{eq:lipschitz_x}}\leq 3\left\{  (L_{>t})^2 \|z_{>t}^+-z_{>t}\|^2+c^2\|A_t\|^2 (B-t)\sum_{s=t+1}^{B}\|A_{s}(z_s^+-z_s)\|^2+\frac{1}{\min(\lambda^+)} \frac{\|z_t^+-z_t\|^2}{\lambda_t^+} \right\} \nonumber\\
& \overset{\eqref{eq:adapt.Delta.Lc.ineq},\eqref{eq:bound:vi:interm1}}\le
3  \left\{  \Big[ 8 \max(\lambda^+) (L_{>t})^2 
+  4 c \|A_t\|^2 (B-1) \Big]  \Delta {\cal L}_c +  \frac{1}{\min(\lambda^+)} \frac{\|z_t^+-z_t\|^2}{\lambda_t^+} \right\} . \nonumber
\end{align}
The above inequality for $t=1,\ldots,B$, \eqref{eq:adapt.Delta.Lc.ineq}, and the definitions of $L$ and $\|A\|_\dagger$  in \eqref{eq:block_norm}, then imply that
\begin{align*}
\sum_{t=1}^B D_{t} & \overset{\eqref{eq:block_norm}} \leq  \left[ 12 c \|A\|^2_\dagger (B-1) + 24 \max(\lambda^+)L^2   \right] \Delta {\cal L}_c + \frac{3}{\min(\lambda^+)} \sum_{t=1}^B \frac{\|z_t^+-z_t\|^2}{\lambda_t^+} \\
&\overset{\eqref{eq:adapt.Delta.Lc.ineq}}\leq 12\left[  c \|A\|^2_\dagger (B-1)+ 2\max(\lambda^+) L^2+\frac{2}{\min(\lambda^+)} \right]\Delta {\cal L}_c. 
\end{align*}
This inequality, relation \eqref{eq:suff.decr.inter}, and the definition of 
$\chitwo$ in
\eqref{eq:sigma1&2}, show that 
 the inequality in \eqref{lemma:norm:residual:Ine.2} holds.\qedflush

\section{Numerical Experiments}\label{sec:numerical}

This section showcases the numerical performance of {\DAA} on three linearly constrained non-convex (weakly convex) programming problems. Subsections \ref{sub:numerics.Problem1.l1norm} and \ref{sub:numerics.Problem3} focus on a quadratic problem, while Subsection \ref{sub:numerics.Problem2} focuses on the distributed \textit{Cauchy loss} function~\cite{Cauchy-poh}. Subsections \ref{sub:numerics.Problem1.l1norm} and  \ref{sub:numerics.Problem2} employ fewer blocks, each with a wide dimensional range, whereas Subsection \ref{sub:numerics.Problem3} uses a large number of one-dimensional blocks. These three proof-of-concept experiments indicate that {\DAA} may not only substantially outperform the relevant benchmarking methods in practice, but also be relatively robust to the relationship between block counts and sizes.

To provide an adequate benchmark for our methods, we compare six algorithmic variants in the tables presented in the following subsections. 
All variants follow the same core {\DAA} framework, differing only in the use of adaptive or constant prox stepsizes and in the presence or absence of Lagrange multiplier updates:
\begin{itemize}
\item {\DAA} / {\DA}: the original method with adaptive or constant prox stepsizes.
\item {\ADAPTPE} / {\CONSTPE}: penalty-only variants with no multiplier updates.
\item {\ADAPTVA} / {\CONSTVA}: \textit{vanilla} ADMM variants with multiplier updates at each iteration.
\end{itemize}

It is worth mentioning that all nonadaptive variants use {\SBIPP} as their subroutine, while all adaptive variants use {\ABIPP}. The iteration counter reported in the tables corresponds to the total number of BIPP/A-BIPP calls, which corresponds to the total number of iterations mentioned in Theorem \ref{the:dynamic}  for the nonadaptive variant (and hence counts {\SBIPP} calls) and in Theorem \ref{the:dynamic.adaptive} for the adaptive variant (and hence counts {\ABIPP} calls).

For each of these variants, the total number of iterations (``Iter''), total runtime (``Time''), and the objective value at the solution (``$f+h$'') are included in the tables presented in the following subsections. 
The tables also contain a column labeled ``Mults'' for the variants {\DAA} and {\DA}, indicating the total number of Lagrange multiplier updates performed by the method. 
This column is omitted for {\PE} and {\VA} variants, since the number of multipliers they perform is naturally clear.

Notably, we attempted to implement the algorithm proposed in \cite{KongMonteiro2024} using different parameter choices $(\theta,\chi)$ that theoretically should ensure convergence; see \eqref{eq:p_update} and \eqref{eq:assumptio:B}. However, since none of these choices managed to find the desired point within the iteration limit, we omitted them from our benchmarks.

In all three subsections, we assume that the blocks for the generated instances of \eqref{initial.problem}-\eqref{eq:block_structure} have the same size $\bar n$, i.e.,
\[
\bar n = n_1 = \ldots = n_t,
\]
and hence that
$n=\bar n B$.
So, the sizes of instances are determined by a triple
$(B,\bar n,l)$ where
$l$ is the number of rows of the constraint matrix $A$.
Instances with $\bar n< l$ are usually harder to solve since they further ``deviate" from \textit{the last block condition} described in the
Introduction (see paragraph following~\eqref{eq:assumptio:B}).

The parameters
$(\omega,B,\bar n,l)$ are specified at the beginning of each table.
Each matrix $A_i\in\mathbb{R}^{l\times \bar n}$, corresponding to the linear constraint in each problem, is filled with i.i.d.\ standard-normal entries. The penalty parameter and the Lagrange multiplier are chosen in accordance with line~\ref{def:c0:p0} of {\DA}, i.e., $c_0 = 1/(1+\|A x^0 - b\|)$ and $p^0 = \mathbf 0$. Moreover, we fix the input $C$ as $C = 10^{3}\rho (1+\|\nabla f(x^0)\|)$. Finally, for consistency, all of the variants with adaptive stepsizes were initialized using the same value, setting $\lambda_i^0 = 10$ for each block.

For each of the problems in Subsections \ref{sub:numerics.Problem1.l1norm} and \ref{sub:numerics.Problem2}, we solve the corresponding subproblem \eqref{algo:BIPP.inexact.subproblem} using the {\ADAP} routine described in Appendix~\ref{sec:acg}, with $(M_0,\beta, \mu, \chi)=(1, 1.2, 0.5,  0.001)$.
The routine terminates once it produces a $(1/8,z_t)$-stationary point.  
If this criterion is not met, the current stepsize $\lambda_t$ is halved,
and {\ADAP} is restarted with the reduced stepsize. Since the subproblems in Subsection~\ref{sub:numerics.Problem3}
are all one-dimensional,
they are solved exactly without invoking  {\ADAP}.

All algorithms executed were run for a maximum of $100,000$ iterations. 
Any algorithm reaching this limit required at least $10$ milliseconds to complete.
A method is considered to outperform another when it achieves both a lower iteration count and a shorter total runtime. 

To ensure timely execution, each algorithm was terminated upon reaching the above iteration limit or upon finding an approximate stationary triple $(x^+, p^+, v^+)$ satisfying the relative error criterion
\[
v^+ \in \nabla f(x^+) + \partial h(x^+) + A^*p^+, \quad \frac{\|v^+\|}{1+\|\nabla f(x^0)\|}\leq \rho , \quad \frac{\|Ax^+-b\|}{1+\|Ax^0-b\|}\leq \eta,
\]
with $\rho=\eta=10^{-5}$.

All experiments were implemented and executed in  MATLAB 2024b and run on a macOS machine with an Apple M3 Max chip (14 Cores), and 96 GB of memory.
For the sake of brevity, our benchmark only considers randomly generated dense instances.
Its main goal is  to demonstrate that the ADMMs presented in this work are promising.

\subsection{Nonconvex Distributed Quadratic Programming (DQP) Problem}\label{sub:numerics.Problem1.l1norm}

This subsection studies the performance of the ADMM variants for finding stationary points of a nonconvex block distributed quadratic programming problem with $B$ blocks (DQP). 

For a given pair $(l,\bar n)\in \mathbb{N}_{++}^{2}$ with $l< \bar n B$, the $B$-block DQP is formulated as
\begin{align}\label{eq:num.problem1}
\min_{x=(x_{1},\ldots,x_{B})\in\mathbb{R}^{\bar n B}}\  & \left\{\sum_{i=1}^{B}\left[\frac{1}{2}\inner{x_{i}}{P_ix_{i}}+\left\langle x_{i},r_{i}\right\rangle \right] \; : \; \|x_i\|_{1}\leq \omega,\, \forall i=1,\ldots B, \text{ and } \sum_{i=1}^BA_ix_i=b  \right\},\nonumber
\end{align}
where $\omega > 0$, $b\in\mathbb{R}^l$, $P_i\in \mathbb{R}^{\bar n\times \bar n}$ is a symmetric indefinite matrix, $x_i\in \mathbb{R}^{\bar n}$, $r_i\in \mathbb{R}^{\bar n}$ and $A_i\in\mathbb{R}^{l\times \bar n}$, for all $i\in \{1,\ldots, B\}$. It is not difficult to check that the DQP problem fits within the template defined by \eqref{initial.problem}-\eqref{eq:block_structure}, where
\[
f(x) = \sum_{i=1}^{B}\left[\frac{1}{2}\inner{x_{i}}{P_ix_{i}}+\left\langle x_{i},r_{i}\right\rangle \right] \quad \text{and}\quad h_i(x_i) = \delta_{\{x\in \R^{\bar n}:\|x\|_1\leq \omega\}}(x_i), \ \forall i=1,\ldots,B.
\]

We now outline the experimental setup used for the DQP problem.
First, to define $b$, we sample $x^b$ uniformly at random satisfying $\|x^b_i\|_1\leq \omega$ for $i=1,\ldots,B$ and set  $b \;=\;\sum_{i=1}^{B} A_i x^b_i$. The initial iteration $x^0$ is drawn uniformly satisfying $\|x^0_i\|_1\leq \omega$ for $i=1,\ldots,B$. An orthonormal matrix $Q_i$ is generated using the standard normal distribution. Then, a diagonal matrix $D_i$ is constructed such that one-third of its diagonal entries are set to zero, while the remaining entries are drawn uniformly from the interval $[-10, 10]$, 
ensuring that at least one of them is negative. 
Next, the matrix $P_i$ is defined as $P_i = Q_i^\top D_i Q_i$.
It is straightforward to verify that if $m_i$ denotes the smallest eigenvalue of $D_i$, then $m_i < 0$, and the function $f(x_{<i}, \cdot , x_{>i})$ is 
$|m_i|$-weakly convex. 
Hence, all variants with constant prox stepsizes set $\lambda_i^0 = 1 / (2 \max\{1, |m_i|\})$ for $i\in\{1,\ldots,B\}$.
Finally, each vector $r_i$ is generated independently, with entries drawn from the standard normal distribution.

The results of the experiments are summarized in Table \ref{tab:P1-onenorm}.

\begin{table}[ht]
\centering
\scriptsize
\begin{adjustbox}{max width=\textwidth}
\begin{tabular}{c c c | r r r | r r r}
\hline
\multicolumn{3}{c|}{Instance} &
\multicolumn{3}{c|}{{\DAA}} &
\multicolumn{3}{c}{{\DA}} \\
$\omega$ & $B$ & $(n,m)$ &
Iters / Mult & Time & $f+h$ &
Iters / Mult & Time & $f+h$ \\
\cmidrule(lr){1-3}\cmidrule(lr){4-6}\cmidrule(lr){7-9}

100 & 2 & (25,25) &
\textbf{4830}/19 & \textbf{2.046} & -6.058e+03 &
+43\%/16 & +352\% & -6.058e+03 \\
    &   & (50,50) &
\textbf{7474}/19 & \textbf{4.076} & -7.936e+02 &
+51\%/22 & +1038\% & -7.936e+02 \\
&  & (50,75) &
\textbf{12558}/17 & \textbf{7.788} & -2.899e+02 &
+129\%/22 & +2172\% & -3.815e+02 \\ \\

    & 5 & (25,10) &
\textbf{2598}/30 & \textbf{1.249} & -2.205e+04 &
+215\%/25 & +671\% & -2.102e+04 \\
    &   & (25,25) &
\textbf{5241}/21 & \textbf{2.744} & -2.126e+04 &
+48\%/22 & +280\% & -2.108e+04 \\
    &   & (50,50) &
\textbf{7260}/17 & \textbf{7.098} & -9.451e+03 &
+98\%/27 & +1169\% & -9.408e+03 \\
    &  & (50,75) &
\textbf{15907}/17 & \textbf{15.287} & -8.303e+03 &
+102\%/15 & +1966\% & -8.055e+03 \\ \\

    & 10 & (25,10) &
\textbf{1051}/18 & \textbf{0.620} & -4.913e+04 &
+444\%/73 & +1649\% & -4.960e+04 \\
    &    & (25,25) &
\textbf{2061}/13 & \textbf{1.623} & -5.257e+04 &
+749\%/14 & +3019\% & -5.051e+04 \\
    &    & (50,50) &
\textbf{7078}/12 & \textbf{10.488} & -2.678e+04 &
+229\%/12 & +2766\% & -2.715e+04 \\
&  & (50,75) &
\textbf{7818}/79 & \textbf{12.764} & -2.651e+04 &
+142\%/47 & +1225\% & -2.406e+04 \\

\hline
1000 & 2 & (25,25) &
\textbf{8086}/16 & \textbf{3.634} & -2.985e+05 &
+58\%/16 & +327\% & -2.985e+05 \\
     &   & (50,50) &
\textbf{6948}/14 & \textbf{4.145} & -1.670e+05 &
+110\%/18 & +1420\% & -1.717e+05 \\ \\

     & 5 & (25,25) &
\textbf{12919}/18 & \textbf{8.411} & -1.993e+06 &
+55\%/38 & +308\% & -1.972e+06 \\
     &   & (25,50) &
\textbf{8796}/18 & \textbf{6.113} & -8.917e+05 &
+26\%/18 & +344\% & -1.094e+06 \\
     &   & (50,50) &
\textbf{6407}/20 & \textbf{5.883} & -1.127e+06 &
+159\%/18 & +1508\% & -1.257e+06 \\
 &  & (50,75) &
+14\%/102 & \textbf{26.804} & -7.187e+05 &
\textbf{20902}/62 & +657\% & -7.265e+05 \\ \\
     & 10 & (25,10) &
\textbf{1508}/31 & \textbf{1.009} & -6.565e+06 &
+412\%/15 & +724\% & -6.762e+06 \\
     &    & (25,25) &
\textbf{3270}/33 & \textbf{2.990} & -4.490e+06 &
+324\%/18 & +750\% & -4.275e+06 \\
     &    & (25,50) &
\textbf{6802}/23 & \textbf{7.481} & -4.101e+06 &
+181\%/45 & +993\% & -3.531e+06 \\
     &    & (50,50) &
\textbf{10545}/50 & \textbf{19.049} & -3.197e+06 &
+174\%/16 & +1379\% & -3.356e+06 \\
     &  & (50,75) &
+229\%/20 & \textbf{191.972} & -3.999e+06 &
\textbf{27809}/25 & +126\% & -4.039e+06 \\
\hline
\end{tabular}
\end{adjustbox}
\caption{Performance of {\DAA} and {\DA} for the DQP problem.}
\label{tab:P1-onenorm}
\end{table}

 This table does not include the methods {\ADAPTPE}, {\CONSTPE}, {\ADAPTVA}, and {\CONSTVA}, as none of them converged for any of the tested instances. We now make some remarks about the above numerical results. Both methods successfully converged on all instances, but {\DAA} outperformed {\DA} on  about $90$\% of the instances tested.  In summary, the above results show that {\DAA} is better than its constant stepsize counterpart {\DA}.

\subsection{Distributed \textit{Cauchy loss} function}\label{sub:numerics.Problem2}

This subsection studies the performance of the ADMM variants for finding stationary points of a  nonconvex block distributed \textit{Cauchy loss} function problem with $B$ blocks.

For a given pair $(l,\bar n)\in \mathbb{N}_{++}^{2}$, with $l < \bar n B$, the $B$-block \textit{Cauchy loss} function is formulated as
\begin{align*}
\min_{x=(x_{1},\ldots,x_{B})\in\mathbb{R}^{\bar n B}}\   \left\{ \sum_{i=1}^{B}\frac{\alpha_i^2}{2}\log \left[1 + \left(\frac{y_i-\inner{x_i}{z_i}}{\alpha_i}\right)^2\right] \; : \; x_i\in\omega\Delta_{\bar n},\; \forall i=1,\ldots,B, \text{ and } \;\sum_{i=1}^BA_ix_i=b\right\},
\end{align*}
where $\omega > 0$, $b\in\mathbb{R}^l$, $\alpha_i>0$,  $y_i\in\mathbb{R}$, 
$(z_i, x_i)\in \mathbb{R}^{\bar n}\times \mathbb{R}^{\bar n}$, and $A_i\in\mathbb{R}^{l\times \bar n}$ for all $i\in \{1,\ldots,B\}$. For $m\in\R_{++}$, let $\mathbf{1}_{m}\in \R^{m}$ be the vector of all ones. The standard $m$-dimensional simplex is defined as $\Delta_{m}=\{x\in\R^m_{++} \ : \ \rev{\mathbf{1}^\top_m} x=1 \}$, and its scaled version is $\omega\Delta_{m}=\{x\in\R^m_{++} \ : \ \rev{\mathbf{1}^\top_m} x=\omega \}$. It is not difficult to check that the distributed \textit{Cauchy loss} problem fits within the template defined by \eqref{initial.problem}-\eqref{eq:block_structure}, where
\[
f(x) = \sum_{i=1}^{B}\frac{\alpha_i^2}{2}\log \left[1 + \left(\frac{y_i-\inner{x_i}{z_i}}{\alpha_i}\right)^2\right] \quad \text{and}\quad h_i(x_i) = \delta_{\omega\Delta_{\bar n}}(x_i), \ \forall i=1,\ldots,B.
\]

We now outline the experimental setup used in the above problem.
For each $i\in \{1,\ldots,B\}$, the scalar $y_i\in\mathbb{R}$ and the vector $z_i\in\mathbb{R}^{\bar n}$ are generated with entries drawn from the standard normal distribution, and the parameter $\alpha_i$ is sampled uniformly at random from the interval $[50,100]$. To define $b$, we sample $x^b=(x^b_1,\ldots,x^b_B)$ uniformly at random satisfying $x^b_i\in \omega\Delta_{\bar n}$ for $i=1,\ldots,B$ and set  $b \;=\;\sum_{i=1}^{B} A_i x^b_i$. The initial iteration $x^0=(x^0_1,\ldots,x^0_B)$ is drawn uniformly satisfying $x^0_i\in\omega\Delta_{\bar n}$ for $i=1,\ldots,B$.

    \begin{table}[ht]
  \centering
  \begin{adjustbox}{max width=\textwidth}
    \begin{tabular}{cccrrrrrrrrrrrr}
    \hline
    \multicolumn{3}{c}{Instance} & \multicolumn{2}{c}{\DAA} & \multicolumn{2}{c}{\DA} & \multicolumn{2}{c}{\ADAPTPE} & \multicolumn{2}{c}{\CONSTPE} & \multicolumn{2}{c}{\ADAPTVA} & \multicolumn{2}{c}{\CONSTVA}  
 \\    $\omega$ & $B$ & $(n,m)$ & Iters/Mults & Time & Iters/Mults & Time & Iters       & Time  & Iters       & Time \\
    \cmidrule(lr){1-3} \cmidrule(lr){4-5} \cmidrule(lr){6-7} \cmidrule(lr){8-9} \cmidrule(lr){10-11} \cmidrule(lr){12-13} \cmidrule(l){14-15}
    100 & 2 & (10,10) & \textbf{158} / 1&+18\% &+24059\% / 1&+107\% / 1&+3\% & \textbf{0.635} &+24541\% &+75\% &+101\% &+41\% & * & * \\
     &  & (25,10) & \textbf{130} / 1& \textbf{0.093} &+55649\% / 1&+2911\% / 1&+3\% &+5\% &+56682\% &+2652\% &+178\% &+159\% & * & * \\
     &  & (25,25) & \textbf{712} / 5&+17\% & * & * &$<$1\% & \textbf{1.639} & * & * &+139\% &+123\% & * & * \\
     &  & (50,50) & \textbf{755} / 4&+6\% & * & * &+3\% & \textbf{2.075} & * & * &+2718\% &+7427\% & * & * \\
     &  & (50,75) & \textbf{7194} / 4& \textbf{42.344} & * & * &+4\% &+6\% & * & * & * & * & * & * \\
     &  & (100,100) & \textbf{1801} / 5&$<$1\% & * & * &$<$1\% & \textbf{5.148} & * & * &+2351\% &+1074\% & * & * \\
     &  & (100,150) & \textbf{5449} / 4&+5\% & * & * &$<$1\% & \textbf{258.139} & * & * & * & * & * & * \\
\\     & 5 & (10,10) & \textbf{146} / 1&+1642\% &+14593\% / 3&+6\% / 3&+3\% &+1494\% &+14873\% & \textbf{1.570} &+9\% &+62\% & * & * \\
     &  & (25,10) & \textbf{55} / 1& \textbf{0.167} &+101656\% / 1&+2935\% / 1&+4\% &+3\% &+102953\% &+2760\% &+47\% &+41\% & * & * \\
     &  & (25,25) & \textbf{306} / 1& \textbf{0.439} &+22699\% / 1&+1475\% / 1&+2\% &$<$1\% &+23135\% &+1409\% &+276\% &+1810\% & * & * \\
     &  & (50,50) & \textbf{597} / 1& \textbf{1.401} & * & * &+2\% &+2\% & * & * &+1380\% &+1220\% & * & * \\
     &  & (50,75) & \textbf{1180} / 1&+1\% & * & * &+1\% & \textbf{59.888} & * & * &+293\% &+12\% & * & * \\
     &  & (100,100) & \textbf{575} / 1&+2\% & * & * &+2\% & \textbf{13.309} & * & * &+1444\% &+146\% & * & * \\
     &  & (100,150) & \textbf{551} / 1& \textbf{12.601} & * & * &+2\% &$<$1\% & * & * &+6456\% &+1513\% & * & * \\
\\     & 10 & (10,10) & \textbf{114} / 1&+77\% &+16081\% / 1&+4\% / 1&+3\% &+78\% &+16340\% & \textbf{2.863} &+88\% &+156\% &+25539\% &+82\% \\
     &  & (25,10) &+15\% / 1&+783\% &+22886\% / 1&+769\% / 1&+17\% &+817\% &+23265\% &+736\% & \textbf{162} & \textbf{0.843} & * & * \\
     &  & (25,25) & \textbf{184} / 1& \textbf{3.571} &+30884\% / 1&+243\% / 1&+3\% &$<$1\% &+31374\% &+231\% &+386\% &+328\% & * & * \\
     &  & (50,50) & \textbf{431} / 1&+7\% &+20886\% / 1&+401\% / 1&+2\% & \textbf{6.153} &+21174\% &+385\% &+690\% &+171\% & * & * \\
     &  & (50,75) & \textbf{455} / 1&+1\% & * & * &+2\% & \textbf{13.017} & * & * &+366\% &+18\% & * & * \\
     &  & (100,100) & \textbf{427} / 1&+12\% & * & * &+2\% & \textbf{21.062} & * & * &+1453\% &+1825\% & * & * \\
     &  & (100,150) & \textbf{507} / 1&+1\% & * & * &+2\% & \textbf{32.618} & * & * &+2473\% &+237\% & * & * \\
    \hline
    1000 & 2 & (10,10) &+2\% / 5&+2\% & * & * & \textbf{3377} & \textbf{29.445} & * & * &+1478\% &+602\% & * & * \\
     &  & (25,10) & \textbf{1339} / 1& \textbf{1.224} & * & * &$<$1\% &+3\% & * & * &+3310\% &+4016\% & * & * \\
     &  & (25,25) &$<$1\% / 1&$<$1\% & * & * & \textbf{4468} & \textbf{2.403} & * & * & * & * & * & * \\
     &  & (50,50) &$<$1\% / 1&+2\% & * & * & \textbf{3080} & \textbf{6.456} & * & * & * & * & * & * \\
     &  & (100,100) &$<$1\% / 1&$<$1\% & * & * & \textbf{25730} & \textbf{70.317} & * & * & * & * & * & * \\
     &  & (100,150) & \textbf{79902} / 1& \textbf{513.481} & * & * &+2\% &$<$1\% & * & * & * & * & * & * \\
\\     & 5 & (10,10) & \textbf{405} / 1&+2\% & * & * &+2\% & \textbf{6.591} & * & * &+3527\% &+398\% & * & * \\
     &  & (25,10) & \textbf{851} / 1& \textbf{2.328} & * & * &$<$1\% &+4\% & * & * &+2668\% &+1325\% & * & * \\
     &  & (25,25) &+2\% / 1&+2\% & * & * & \textbf{476} & \textbf{1.164} & * & * & * & * & * & * \\
     &  & (50,50) &$<$1\% / 1& \textbf{4.209} & * & * & \textbf{1126} &$<$1\% & * & * & * & * & * & * \\
     &  & (50,75) & \textbf{1990} / 1& \textbf{9.221} & * & * &+2\% &+3\% & * & * & * & * & * & * \\
     &  & (100,100) &$<$1\% / 1& \textbf{20.906} & * & * & \textbf{1685} &+2\% & * & * & * & * & * & * \\
     &  & (100,150) &+2\% / 1&+2\% & * & * & \textbf{3647} & \textbf{75.915} & * & * & * & * & * & * \\
\\     & 10 & (10,10) & \textbf{584} / 1&+4\% & * & * &$<$1\% & \textbf{8.175} & * & * &+727\% &+359\% & * & * \\
     &  & (25,10) &$<$1\% / 1&+5\% & * & * & \textbf{447} & \textbf{2.401} & * & * &+4366\% &+1524\% & * & * \\
     &  & (25,25) &$<$1\% / 1& \textbf{4.535} & * & * & \textbf{965} &+11\% & * & * &+3532\% &+1866\% & * & * \\
     &  & (50,50) &$<$1\% / 1&+5\% & * & * & \textbf{442} & \textbf{5.887} & * & * & * & * & * & * \\
     &  & (50,75) &$<$1\% / 1&+4\% & * & * & \textbf{1125} & \textbf{9.475} & * & * & * & * & * & * \\
     &  & (100,100) & \textbf{4831} / 1&$<$1\% & * & * &+1\% & \textbf{104.390} & * & * & * & * & * & * \\
     &  & (100,150) & \textbf{2143} / 1& \textbf{65.109} & * & * &$<$1\% &+1\% & * & * & * & * & * & * \\
    \hline\multicolumn{13}{l}{\footnotesize\textit{Bolded values equal to the best algorithm according to iteration count or time. Column ``Time'' is measured in seconds.}}\\
\multicolumn{13}{l}{\footnotesize\textit{* indicates the algorithm failed to find a stationary point meeting the tolerances by the 100,000th iteration.}}\\
\bottomrule
    \end{tabular}
  \end{adjustbox}
  \caption{Performance of {\DAA}, {\DA}, {\ADAPTPE}, {\CONSTPE}, {\CONSTVA}, and {\ADAPTVA} for the Cauchy loss problem.}
  \label{tab:cauchy}
\end{table}

The results of this experiment are summarized in Table \ref{tab:cauchy}. The row labeled ``$f+h$’’ is omitted, as its values consistently ranged from $10^{-12}$ to $10^{-9}$ in every instance.

We now present some comments about the numerical results.
From these tables, we first observe that the adaptive variants outperform their constant  prox stepsize counterparts. Moreover, both {\DAA} and {\ADAPTPE} converged on all instances, whereas {\DA} and {\CONSTPE} converged on only $40\%$ of the instances. The standard ADMM performed poorly overall: {\ADAPTVA} converged on $36\%$ of the instances, while {\CONSTVA} converged in only $2\%$ of the cases (one instance). In summary, the tables above show that for the \textit{Cauchy loss} problem, {\DAA} and {\ADAPTPE} exhibit a similar behavior.

\subsection{Nonconvex QP with Box Constraints}\label{sub:numerics.Problem3}

This subsection studies the performance of ADMM variants for solving a class of nonconvex quadratic problems with box constraints (QP-BC).

Specifically, the QP-BC problem considered in this subsection is 
\begin{align*}
\min_{x=(x_1,\ldots,x_B)\in\mathbb{R}^{\bar n B}}\  & \left\{\frac{1}{2}\inner{x}{Px} + \inner{r}{x} \; : \; \|x\|_{\infty}\leq \omega \text{ and } Ax=b\right\},\nonumber
\end{align*}
where $P\in \R^{B\times B}$ is a symmetric indefinite matrix, $A\in \R^{l\times B}$, $(r,b)\in \R^B \times\R^l$, and $\omega>0$.
In this subsection, we view QP-BC as an extreme special case of  \eqref{initial.problem}-\eqref{eq:block_structure} where
each variable forms a block, and hence $B=n$ and $\bar n = 1$.
In this case, the variable blocks correspond to individual coordinates.
Consequently, each column of the matrix $A$ defines an $A_t$ matrix for $t\in\{1,\ldots,B\}$. It is not difficult to check that the QP-BC problem fits within the template defined by \eqref{initial.problem}-\eqref{eq:block_structure}, where
\[
f(x) =  \frac{1}{2}\inner{x}{Px} + \inner{r}{x}\quad \text{and}\quad h_i(x_i) = \delta_{\{x\in \R^{\bar n}:\|x\|_\infty\leq \omega\}}(x_i), \ \forall i=1,\ldots,B.
\]

We now describe how we orchestrated our QP-BC experiments.
First, an orthonormal matrix $Q$ is generated using the standard normal distribution.
Then, a diagonal matrix $D_i$ is constructed such that one-third of its diagonal entries are set to zero, while the remaining entries are drawn uniformly from the interval $[-10, 10]$, 
ensuring that at least one of them is negative.
Next, the matrix $P$ is defined as $P = Q^\top D Q$. 
The vector $r$ is generated using the standard normal distribution.

The results of this experiment are summarized in Table \ref{tab:problem3}.

\begin{table}[htb!]
  \centering
  \begin{adjustbox}{max width=\textwidth}
    \begin{tabular}{cc|rrrr|rrrr|rrr}
    \hline
    \multicolumn{2}{c|}{Instance} & \multicolumn{4}{c|}{{\DAA}} & \multicolumn{4}{c|}{{\DA}} & \multicolumn{3}{c}{{\ADAPTPE}} \\
    $\omega$ & $(B,l)$ & Iters & Time & Mults & $f+h$ & Iters & Time & Mults & $f+h$ & Iters & Time & $f+h$ \\
    \cmidrule(lr){1-2} \cmidrule(lr){3-6} \cmidrule(lr){7-10} \cmidrule(lr){11-13} 
1 & (50,20)
 & +74\% & +75\% & 18 & -7.280e+01
 & \textbf{2935} & \textbf{1.422} & 24 & -7.309e+01
 & +1479\% & +1468\% & -7.309e+01 \\
 & (50,40)
 & \textbf{24942} & \textbf{15.872} & 34 & -4.231e+01
 & +2\% & +2\% & 34 & -4.231e+01
 & +12\% & +11\% & -4.263e+01 \\
\\
 & (100,10)
 & +197\% & +283\% & 37 & -1.418e+02
 & \textbf{641} & \textbf{1.407} & 17 & -1.510e+02
 & +379\% & +604\% & -1.431e+02 \\
 & (100,25)
 & \textbf{2739} & \textbf{8.347} & 19 & -2.070e+02
 & +11\% & +4\% & 19 & -2.070e+02
 & * & * & * \\
 & (100,50)
 & +243\% & +245\% & 101 & -1.593e+02
 & \textbf{9113} & \textbf{39.417} & 12 & -1.799e+02
 & +498\% & +476\% & -1.522e+02 \\
 & (100,75)
 & \textbf{69340} & +1\% & 20 & -5.108e+01
 & +1\% & \textbf{373.452} & 20 & -5.108e+01
 & * & * & * \\
\cmidrule(lr){1-2} \cmidrule(lr){3-6} \cmidrule(lr){7-10} \cmidrule(lr){11-13}
10 & (50,20)
 & \textbf{2674} & \textbf{1.310} & 12 & -6.293e+03
 & +668\% & +664\% & 20 & -6.528e+03
 & +89\% & +87\% & -6.379e+03 \\
 & (50,40)
 & \textbf{27770} & \textbf{17.865} & 13 & -5.477e+02
 & +537\% & +535\% & 23 & -6.229e+02
 & +68\% & +67\% & -4.488e+02 \\
\\
 & (100,10)
 & \textbf{446} & \textbf{1.110} & 17 & -1.504e+04
 & +178\% & +152\% & 24 & -1.519e+04
 & +3392\% & +2975\% & -1.463e+04 \\
 & (100,25)
 & +7\% & +7\% & 15 & -4.293e+03
 & \textbf{5735} & \textbf{16.410} & 10 & -4.326e+03
 & * & * & * \\
 & (100,50)
 & \textbf{9170} & \textbf{36.270} & 21 & -6.599e+03
 & +121\% & +121\% & 16 & -6.495e+03
 & * & * & * \\
 & (100,75)
 & \textbf{42857} & \textbf{217.284} & 41 & -5.032e+03
 & +187\% & +187\% & 15 & -3.907e+03
 & * & * & * \\
\cmidrule(lr){1-2} \cmidrule(lr){3-6} \cmidrule(lr){7-10} \cmidrule(lr){11-13}
100 & (50,20)
 & \textbf{2530} & \textbf{1.236} & 18 & -8.944e+05
 & +1\% & +1\% & 17 & -9.056e+05
 & +144\% & +141\% & -8.944e+05 \\
 & (50,40)
 & \textbf{14982} & \textbf{9.710} & 32 & -3.017e+05
 & +65\% & +66\% & 26 & -3.546e+05
 & +128\% & +125\% & -3.580e+05 \\
\\
 & (100,10)
 & \textbf{2148} & \textbf{4.723} & 17 & -8.557e+05
 & +123\% & +123\% & 18 & -7.527e+05
 & +12838\% & +12789\% & -7.892e+05 \\
 & (100,25)
 & \textbf{3474} & \textbf{9.905} & 22 & -9.911e+05
 & +415\% & +416\% & 48 & -9.818e+05
 & +2242\% & +2238\% & -9.911e+05 \\
 & (100,50)
 & +21\% & +21\% & 45 & -8.377e+05
 & \textbf{24711} & \textbf{97.683} & 23 & -9.188e+05
 & * & * & * \\
 & (100,75)
 & \textbf{19436} & \textbf{98.258} & 22 & -6.714e+05
 & +168\% & +168\% & 94 & -6.770e+05
 & +739\% & +739\% & -4.715e+05 \\
\cmidrule(lr){1-2} \cmidrule(lr){3-6} \cmidrule(lr){7-10} \cmidrule(lr){11-13}
1000 & (50,20)
 & \textbf{3143} & \textbf{1.665} & 19 & -2.403e+07
 & +370\% & +330\% & 36 & -2.194e+07
 & * & * & * \\
 & (50,40)
 & $<$1\% & $<$1\% & 73 & -1.556e+07
 & \textbf{85465} & \textbf{54.416} & 75 & -1.556e+07
 & * & * & * \\
\\
 & (100,10)
 & +95\% & +102\% & 22 & -7.742e+07
 & \textbf{1413} & \textbf{3.096} & 17 & -7.958e+07
 & +25263\% & +25244\% & -7.742e+07 \\
 & (100,25)
 & \textbf{4987} & \textbf{14.265} & 45 & -2.093e+08
 & +159\% & +159\% & 23 & -2.088e+08
 & * & * & * \\
 & (100,50)
 & +52\% & +53\% & 42 & -3.153e+07
 & +122\% & +122\% & 20 & -3.135e+07
 & \textbf{18548} & \textbf{73.107} & -3.266e+07 \\
 & (100,75)
 & \textbf{44632} & \textbf{226.221} & 22 & -2.793e+07
 & +120\% & +120\% & 27 & -2.793e+07
 & * & * & * \\
\hline
\multicolumn{13}{l}{\footnotesize\textit{Bolded values equal to the best algorithm according to iteration count or time. Column ``Time'' is measured in seconds.}}\\
\multicolumn{13}{l}{\footnotesize\textit{* indicates the algorithm failed to find a stationary point meeting the tolerances by the 100,000th iteration.}}\\
\bottomrule
    \end{tabular}
  \end{adjustbox}
  \caption{Performance of {\DAA}, {\DA} and {\ADAPTPE} for the nonconvex QP-BC}
  \label{tab:problem3}
\end{table}

We now make some remarks about the above numerical results.
We begin by comparing the performance of the two ADMM variants.
Both variants successfully converged on all instances, but 
{\DAA} outperformed {\DA} on  about $63$\% of the instances tested.

We now compare {\DAA} against {\ADAPTPE}. While {\DAA} converged successfully in all test instances, {\ADAPTPE} converged in only about 63\% of them. 
Moreover, {\DAA} outperformed {\ADAPTPE} on about 99\% of the test instances.

In summary, the above results confirm the findings of the previous subsections, i.e., {\DAA} is better than its constant stepsize counterpart {\DA}, and {\DAA} is more stable than {\ADAPTPE}.


\section{Concluding Remarks}\label{sec:concluding}

 We start by making some remarks about the analysis of this paper. Even though we have only considered proximal ADMMs, our analysis also applies to proximal penalty methods. \rev{If the input $C >0$ in {\SA} is chosen such that $C = \rho$, then 
{\SA} will only perform a single Lagrange multiplier update at its last iteration (see  line~\ref{algo.sa.lastLagrangeMultiplier} of {\SA}).
However, it can be easily seen from our convergence analysis that this last multiplier update is not essential and can be removed, thereby yielding a proximal penalty method that never performs Lagrange multiplier updates. Similarly, if $C = \rho$ in {\DAA}, then a Lagrange multiplier update is performed  at 
of the end of each $\ell$-th cycle, i.e.,
the iterations
for which $c=c_{\ell-1}$ (see line \ref{algo.adapt.admm.update.multiplier} of {\DAA}). Since these Lagrange multiplier updates are not essential for our convergence analysis, they can be removed from the description of {\DAA}, resulting in an adaptive proximal penalty method with convergence properties similar to those described in Theorem~\ref{the:dynamic.adaptive}.}

We now discuss some possible extensions of our analysis in this paper. First, recall that {\SA} performs the test in its line 9 to update the Lagrange multiplier, leading to infrequent multiplier updates. It would be interesting to develop proximal ADMM variants with alternative Lagrange multiplier update rules that are more computationally efficient than the one used by {\SA}. Second, it would be 
interesting to develop proximal ADMM variants for composite optimization problems with  block constraints given by $\sum_{t=1}^B g_t (x_t) \le 0$ where
the components of $g_t : \R^{n_t} \to \R^{l}$ are convex for each $t=1,\ldots,B$.
Third, this paper assumes that $\dom h$ is bounded (see assumption (A1)) and $h$ restricted to its domain is Lipschitz continuous (see assumption (A1)). 
It would be interesting to extend its analysis to  the case where these assumptions are removed. Finally, our analysis shows that $(\hat x, \hat p, \hat v, \hat \varepsilon, \hat c)$ 
output by {\DAA} satisfies  $\|A\hat x-b\|\le \eta$ whenever $\hat c={\cal O}(1/\eta)$.
It would be interesting to investigate ADMM variants that guarantees this same $\eta$-feasibility under  weaker conditions on $c$, e.g.,  $c={\cal O}(1)$. Some efforts along this direction have been made in \cite{zhang2020proximal,ErrorBoundJzhang-ZQLuo2020} under more restrictive conditions on problem \eqref{initial.problem} than the ones assumed in this paper.




\appendix

\section{Technical Results for Proof of Lagrange Multipliers}\label{Appendix:tech.lagra.mult}

This appendix provides some technical results to show
that under certain conditions the sequence of Lagrange multipliers generated by {\SA} is bounded.

The next two results,  used to prove Lemma~\ref{lem:qbounds-2},  can be found in  \cite[Lemma B.3]{goncalves2017convergence} and 
\cite[Lemma 3.10]{kong2023iteration}, respectively. 

\begin{lemma}\label{lem:linalg} 
Let $A:\R^n \to \R^l$ be a  nonzero linear operator. Then,
\[
\nu^+_A\|u\|\leq \|A^*u\|,   \quad \forall u \in A(\R^n).
\]
\end{lemma}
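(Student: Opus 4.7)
The plan is to derive this bound from the singular value decomposition (SVD) of $A$. Since $A:\R^n \to \R^l$ is nonzero, let $r \ge 1$ denote its rank and write $A = U \Sigma V^*$, where $U = [u_1 \cdots u_l]$ and $V = [v_1 \cdots v_n]$ are orthogonal, and $\Sigma \in \R^{l \times n}$ has diagonal entries $\sigma_1 \ge \sigma_2 \ge \cdots \ge \sigma_r = \nu^+_A > 0$ (and zeros elsewhere). A standard fact then gives $A(\R^n) = \mathrm{span}\{u_1,\ldots,u_r\}$, since $Ax = U \Sigma V^* x$ has nonzero components (in the $U$-basis) only in the first $r$ coordinates.

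Given any $u \in A(\R^n)$, I would expand $u = \sum_{i=1}^r c_i u_i$, so that $\|u\|^2 = \sum_{i=1}^r c_i^2$ by orthonormality of the $u_i$'s. Applying the adjoint and using orthonormality of the $v_i$'s, I then get
\[
A^* u = V \Sigma^* U^* u = \sum_{i=1}^r \sigma_i c_i v_i, \qquad \|A^* u\|^2 = \sum_{i=1}^r \sigma_i^2 c_i^2 \ge \sigma_r^2 \sum_{i=1}^r c_i^2 = (\nu^+_A)^2 \|u\|^2.
\]
Taking square roots yields the claim.

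The argument is essentially a routine linear algebra exercise with no real obstacle; the only point that deserves attention is the identification $A(\R^n) = \mathrm{span}\{u_1,\ldots,u_r\}$, since it is precisely this restriction that makes the lower bound hold (the inequality would fail for general $u \in \R^l$, which could have components along $u_{r+1},\ldots,u_l$ lying in $\ker(A^*)$). An equivalent route would be to note that $A(\R^n) = \ker(A^*)^\perp$, so $A^*$ restricted to $A(\R^n)$ is injective, and its smallest singular value coincides with the smallest positive singular value of $A^*$, which equals $\nu^+_A$ since $A$ and $A^*$ share their singular values.
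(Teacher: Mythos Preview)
Your proof is correct. The paper does not actually supply a proof of this lemma; it simply cites \cite[Lemma B.3]{goncalves2017convergence}, so there is nothing to compare against, and your SVD-based argument is a standard and entirely valid way to establish the result.
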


\begin{lemma}\label{lem:bound_xiN}
Let $h$ be a function as in (A1).
Then, for every $\delta \ge 0$, $z\in {\mathcal H}$,   and $\xi \in \partial_{\delta} h(z)$, we have
\begin{equation*}\label{bound xi}
\|\xi\|{\rm dist}(u,\partial {\mathcal H}) \le \left[{\rm dist}(u,\partial {\mathcal H})+\|z-u\|\right]M_h + \Inner{\xi}{z-u}+\delta \quad \forall u \in {\mathcal H}
\end{equation*} 
where $\partial {\cal H}$ denotes the boundary of ${\cal H}$.
\end{lemma}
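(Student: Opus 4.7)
The plan is to bound $\|\xi\|\,{\rm dist}(u,\partial \mathcal{H})$ by testing the $\varepsilon$-subdifferential inequality against a carefully chosen point in $\mathcal{H}$. Recall that $\xi \in \partial_\delta h(z)$ means $h(\tilde z) \ge h(z) + \Inner{\xi}{\tilde z - z} - \delta$ for every $\tilde z \in \R^n$. The trivial case $\xi = 0$ can be dispatched first, since the target inequality then reduces to $0 \le [\,\cdot\,] M_h + \delta$, which is immediate.

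For the substantive case $\xi \neq 0$, I will set $r := {\rm dist}(u, \partial \mathcal{H})$ and choose the test point $\tilde z := u + r\,\xi/\|\xi\|$. Since $\mathcal{H}$ is closed and convex with $u \in \mathcal{H}$, the closed ball $\bar B(u,r)$ is contained in $\mathcal{H}$, so $\tilde z \in \mathcal{H}$. The key algebraic observation is that this choice makes $\Inner{\xi}{\tilde z - u} = r\|\xi\|$, so the $\varepsilon$-subdifferential inequality rearranges into
\[
r\|\xi\| \;\le\; h(\tilde z) - h(z) + \Inner{\xi}{z - u} + \delta.
\]

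To finish, I will apply assumption (A2) — the $M_h$-Lipschitz continuity of $h$ on $\mathcal{H}$ — together with the triangle inequality:
\[
h(\tilde z) - h(z) \;\le\; M_h \|\tilde z - z\| \;\le\; M_h \bigl( \|\tilde z - u\| + \|u - z\|\bigr) \;=\; M_h\bigl( r + \|z - u\|\bigr).
\]
Substituting this into the displayed inequality yields the stated bound directly.

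There is no real obstacle here; the only technical care is in verifying that $\tilde z$ actually lies in $\mathcal{H}$ so that the Lipschitz bound is available. This is precisely the standard fact that for a closed convex set $\mathcal{H}$, the distance from $u$ to the boundary equals the largest radius of a ball around $u$ still contained in $\mathcal{H}$. The whole proof should fit in a few lines once the test point $\tilde z$ is written down.
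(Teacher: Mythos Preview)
Your proof is correct. The paper does not supply its own proof of this lemma; it simply cites \cite[Lemma~3.10]{kong2023iteration}, so there is no in-paper argument to compare against, but your approach---testing the $\delta$-subdifferential inequality at the point $\tilde z = u + r\,\xi/\|\xi\|$ with $r = {\rm dist}(u,\partial\mathcal{H})$ and then invoking the Lipschitz bound on $h$---is the standard one and goes through exactly as you describe.
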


The following result, whose statement is in terms of
the $\delta$-subdifferential instead of the classical subdifferential,
is a slight generalization of \cite[Lemma B.3]{sujanani2023adaptive}.
For the sake of completeness, we also include its proof.  

\begin{lemma}\label{lem:qbounds-2}
Assume that
$b \in \R^{l}$,
linear operator $A:\mathbb{R}^n \to \mathbb{R}^l$
and function $h(\cdot)$,
satisfy assumptions (A4)
and (A1), respectively.  
If $(q^-,\varrho) \in A(\R^n) \times (0,\infty)$ and $(z,q,r,\delta) \in  \dom h \times \mathbb{R}^l \times \mathbb{R}^n\times \R_+$ satisfy
\begin{equation}\label{eq:cond:Lem:A1}
q=q^-+\varrho(Az-b)\quad\text{and}\quad  r \in \partial_\delta h(z)+A^{*}q 
, 
\end{equation}
then we have 
\begin{equation}\label{q-bound-2}
 \|q\|\leq \max\left\{\|q^-\|, \ \Xi\Big(\|z-\bar x\|\ , \ \|r\|+\delta \Big) \right\}
\end{equation}
where  $\mathrm{\bar x}$ is as in (A4),
\begin{equation}\label{eq:Technical.varphi.def}
\Xi(s,t) := 
\frac{t+ \left(s+\bar d \, \right)(M_h+t)}{\bar d \nu^{+}_A} \quad \forall (s,t)
\in \R_+^2,
\end{equation}
$M_h$ and $\bar d>0$ are as in (A1) and (A4), respectively, and $\nu^+_A$ is  the smallest positive singular value of $A$.
\end{lemma}	

\begin{proof}
 We first claim that
\begin{equation}\label{ineq:aux9001-2}
 \bar{d}\nu_A^{+}\|q\|
\leq (\|z-\bar x\|+\bar d)\left(M_h + \|r\| \right)  - \Inner{q}{Az-b}+\delta
\end{equation}
holds.
The assumption on $(z,q,r,\delta)$ implies that $r-A^{*}q \in \partial_{\delta} h(z)$. Hence, using the Cauchy-Schwarz inequality, the definitions of $\bar d$ and $\bar x$ in (A4), and
Lemma~\ref{lem:bound_xiN} with $\xi=r-A^{*}q$, and  $u=\bar x$, we have:
 \begin{align}\label{first inequality p-2}
   \bar d\|r-A^{*}q\|-\left[\bar d+\|z-\bar x\|\right]M_h &\overset{\eqref{bound xi}}{\leq}  \Inner{r-A^{*}q}{z-\bar x}+\delta\leq \|r\| \|z-\bar x\| - \Inner{q}{Az-b}+\delta.
 \end{align}
 Now, using the above inequality and 
 the triangle inequality, we conclude that:
 \begin{align}\label{second inequality p-2}
 \bar d \|A^*q\| + \Inner{q}{Az-b}
 &\overset{\eqref{first inequality p-2}}{\leq} \left[\bar d+\|z-\bar x\|\right]M_h + \|r\| \left(\|z-\bar x\| + \bar d\right)+\delta = (\|z-\bar x\|+\bar d)\left(M_h + \|r\| \right)+\delta.
 \end{align}
 We note that $q \in A(\R^n)$  follows immediately from the identity in (77), the hypothesis that $q^- \in A(\R^n)$, and the fact that $b\in \text{Im}(A)$ due to assumption (A4). Hence, inequality
 \eqref{ineq:aux9001-2} now follows
from the above inequality
and
Lemma~\ref{lem:linalg}.

We now prove \eqref{q-bound-2}. 
Relation \eqref{eq:cond:Lem:A1}  implies that $\Inner{q}{Az-b}=\|q\|^2/\varrho-\Inner{q^-}{q}/\varrho$, and hence that
\begin{equation}\label{q relation-2}
    \bar d \nu^{+}_A\|q\|+\frac{\|q\|^2}{\varrho}\leq (\|z-\bar x\|+\bar d)(M_h+\|r\|)+\frac{\Inner{q^-}{q}}{\varrho}+\delta\leq (\|z-\bar x\|+\bar d)(M_h+\|r\|)+\frac{\|q\| }{\varrho}\|q^-\|+\delta,
\end{equation}
where the last inequality is due to the Cauchy-Schwarz inequality.
Now, letting
$W$  denote the right hand side of \eqref{q-bound-2} and using \eqref{q relation-2},
we conclude that
\begin{align}\label{prelim q-bound-2}
\left(\bar d \nu^+_A+\frac{\|q\|}{\varrho}    \right)\|q\|&\overset{\eqref{q relation-2}}{\leq} \left(\frac{(\|z-\bar x\|+\bar d)(M_h+\|r\|)+\delta}{W}+\frac{\|q\| }{\varrho}\right)W\nonumber\\ 
&\leq \left(\frac{(\|z-\bar x\|+\bar d)M_h+(\|z-\bar x\|+\bar d+1)(\|r\|+\delta)}{W}+\frac{\|q\| }{\varrho}\right)W\leq \left(\bar d \nu^+_A+\frac{\|q\|}{\varrho}    \right) W,
\end{align}
and hence that \eqref{q-bound-2} holds.
\end{proof}

We conclude this section with a technical result of convexity which is used in the proof of Lemma \ref{Lemma:ABIPP}.
Its proof can be found in \cite[Lemma A1]{melo2023proximal} but for the sake of completeness
a more detailed proof is given here.

\begin{lemma}\label{conv:result}
		Assume that $\xi>0$, $\psi \in \bConv{n}$ and positive definite real-valued $n \times n$-matrix $Q$ are such that
		$\psi - (\xi/2) \|\cdot\|^2_Q$ is convex and let
		$(y,v,\eta) \in \R^n \times \R^n \times  \R_+$ be such that $v\in \partial_\eta \psi(y)$.
		Then, 
		for any $\tau>0$,
			\begin{equation}\label{eq:auxlemA1}
		\psi(u) \ge \psi(y) + \Inner{v}{u-y} - (1+\tau^{-1})\eta+ \frac{(1+\tau)^{-1}\xi}{2} \|u-y\|_Q^2  \quad \forall u \in \R^n.
		\end{equation}
	\end{lemma}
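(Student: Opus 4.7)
The plan is to reduce the claim to the standard strong-convexity inequality by decomposing $\psi$ as
\[
\psi(\cdot) = \phi(\cdot) + \frac{\xi}{2}\|\cdot\|_Q^2, \qquad \phi := \psi - \frac{\xi}{2}\|\cdot\|_Q^2,
\]
where $\phi$ is convex by hypothesis. The idea is to evaluate both sides at an interpolate $u_\alpha := (1-\alpha) y + \alpha u$ for an $\alpha \in (0,1)$ to be chosen, so that the $\eta$-subdifferential bound gives a lower estimate of $\psi(u_\alpha)$ while convexity of $\phi$ gives a matching upper estimate, with a quadratic gap that produces the desired $\|u-y\|_Q^2$-term.

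First, since $v \in \partial_\eta \psi(y)$, applying the definition at $u_\alpha$ yields
\[
\psi(u_\alpha) \ge \psi(y) + \alpha \Inner{v}{u-y} - \eta.
\]
On the other hand, writing $\psi(u_\alpha)=\phi(u_\alpha)+(\xi/2)\|u_\alpha\|_Q^2$, using convexity of $\phi$ to bound $\phi(u_\alpha)\le (1-\alpha)\phi(y)+\alpha\phi(u)$, and substituting back $\phi=\psi-(\xi/2)\|\cdot\|_Q^2$, I obtain
\[
\psi(u_\alpha) \le (1-\alpha)\psi(y) + \alpha \psi(u) + \frac{\xi}{2}\Big[\|u_\alpha\|_Q^2 - (1-\alpha)\|y\|_Q^2 - \alpha\|u\|_Q^2\Big].
\]
The bracketed term is exactly $-\alpha(1-\alpha)\|u-y\|_Q^2$ by the standard convex-combination identity for quadratic forms. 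Combining the two estimates gives
\[
\psi(y) + \alpha \Inner{v}{u-y} - \eta \le (1-\alpha)\psi(y) + \alpha\psi(u) - \frac{\xi}{2}\alpha(1-\alpha)\|u-y\|_Q^2.
\]
Rearranging terms and dividing by $\alpha > 0$ yields
\[
\psi(u) \ge \psi(y) + \Inner{v}{u-y} - \frac{\eta}{\alpha} + \frac{\xi}{2}(1-\alpha)\|u-y\|_Q^2.
\]

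Finally, to match the exact coefficients in \eqref{eq:auxlemA1}, I choose $\alpha = \tau/(1+\tau) \in (0,1)$, so that $1-\alpha = (1+\tau)^{-1}$ and $1/\alpha = 1+\tau^{-1}$, which delivers the claimed inequality. No step appears to be a genuine obstacle: the decomposition is forced, the interpolation-and-convexity trick is the only natural way to convert the flat $\eta$-subdifferential inequality into a quadratic lower bound, and the parameter match with $\tau$ is just algebra.
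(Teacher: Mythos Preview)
Your proof is correct and complete; all steps check out, including the quadratic identity $\|u_\alpha\|_Q^2 - (1-\alpha)\|y\|_Q^2 - \alpha\|u\|_Q^2 = -\alpha(1-\alpha)\|u-y\|_Q^2$ and the final parameter match $\alpha = \tau/(1+\tau)$.

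However, the route is genuinely different from the paper's. The paper does not interpolate. It sets $\psi_v := \psi - \Inner{v}{\cdot}$, observes that $\psi_v$ is $\xi$-strongly convex in the $Q$-norm and hence has a unique minimizer $\bar y$, and uses the two bounds $\psi_v(u) \ge \psi_v(\bar y) + (\xi/2)\|u-\bar y\|_Q^2$ and $\psi_v(\bar y) \ge \psi_v(y) - \eta$ (the latter coming directly from $v \in \partial_\eta \psi(y)$). It then passes from $\|u-\bar y\|_Q^2$ to $\|u-y\|_Q^2$ via the Young-type inequality $(1+\tau)^{-1}\|u-y\|_Q^2 \le \|u-\bar y\|_Q^2 + \tau^{-1}\|\bar y-y\|_Q^2$, and absorbs the extra term using $(\xi/2)\|\bar y - y\|_Q^2 \le \eta$. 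Your interpolation argument is more elementary in that it never needs to introduce or prove existence of $\bar y$, and it delivers the same constants with less bookkeeping; the paper's approach is more geometric, making transparent that the $\eta$-loss in the bound is exactly the price of $y$ not being the true minimizer of $\psi_v$.
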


\begin{proof}
		Let $\psi_v := \psi-\Inner{v}{\cdot}$.
		The assumptions imply that $\psi_v$ has a unique global minimum $\bar y $
		and that
		\begin{equation}\label{ineq:u}
		\psi_v(u)\ge \psi_v(\bar y)+\frac{\xi}{2}\|u-\bar y\|_Q^2  \ge \psi_v(y)-\eta  +\frac{\xi}{2}\|u-\bar y\|_Q^2
		\end{equation}
		for every $u \in \R^n$. 
The above inequalities with $u=y$
 imply that $(\xi/2) \|\bar y-y\|_Q^2 \le \eta$. 
On the other hand, for any $\tilde u,u'\in \R^n$ and $\tau>0$, it holds
\begin{align*}
\|\tilde u+u'\|^2 
&= \|\tilde u\|^2 + \|u'\|^2 + 2 \Inner{\frac{1}{\sqrt{\tau}} \tilde u}{\sqrt{\tau} u'} 
\le \|\tilde u\|^2 + \|u'\|^2+\frac{1}{\tau}\|\tilde u\|^2+\tau \|u'\|^2\\
&=(1+\tau) \|u'\|^2 + (1+\tau^{-1}) \|\tilde u\|^2    
\end{align*}
which implies in
\begin{align*}
(1+\tau)^{-1} \|\tilde u+ u'\|^2&\leq \|u'\|^2+(1+\tau)^{-1}(1+\tau^{-1}) \|\tilde u\|^2=\|u'\|^2+\tau^{-1} \|\tilde u\|^2.
\end{align*}
Hence, adding and subtracting the term $(\tau^{-1}\xi/2)\|\bar y-y\|_Q^2$ in the right hand side of \eqref{ineq:u}
and using the previous inequality with $\tilde u=u-\bar{y}$ and $u'=\bar{y}-y$, we obtain that
		\begin{align*}
		\psi_v(u)&
		\ge  \psi_v(y)-\eta-\frac{\tau^{-1}\xi}{2}\|\bar y-y\|_Q^2 +\frac{\xi}{2}  \left(  \tau^{-1} \|y-\bar y\|_Q^2+\|u-\bar y\|_Q^2 \right) \nonumber \\
		&\ge \psi_v(y)-(1+\tau^{-1})\eta+\frac{(1+\tau)^{-1}\xi}{2}\|u-y\|_Q^2 \end{align*}
 		for every $u \in \R^n$.		
Hence, \eqref{eq:auxlemA1}  follows  from the above conclusion and the  definition of $\psi_v$.	
\end{proof}

\section{ADAP-FISTA algorithm} \label{sec:acg}
This appendix section presents an adaptive variant of ACG, called ADAP-FISTA, for solving \eqref{ISO:problem} under the assumption that (B1), (B2), and
 $\nabla \psis(\cdot)$ is $\tilde M$-Lipschitz 
continuous, i.e., 
\begin{equation}\label{ineq:uppercurvature1}
\|\nabla \psis(z') -  \nabla \psis(z)\|\le \tilde M \|z'-z\| \quad \forall z,z' \in \R^n.
\end{equation}
We would like to emphasize that the notations introduced in this appendix, related to the ADAP-FISTA, are local to this section and should not be confused with those used in previous sections. 
These choices are made to remain consistent with the original presentation of the algorithm in \cite[Appendix A]{sujanani2023adaptive}, and they do not carry the same interpretation as in the rest of the paper.

Before formally stating ADAP-FISTA, we give some comments.
{\ADAP} requires
as input an arbitrary  positive estimate  $M_0$ 
for the unknown parameter $\tilde M$.
Moreover, {\ADAP} is
a variant of SFISTA \cite{beck2009fast,Amirbeck2009fast,nesterov1983method}, which in turn
 is an
ACG variant that solves instances of \eqref{ISO:problem} with $\psis$ strongly convex and that requires the availability of a strong convex parameter for $\psis$.
 Since
 {\ADAP} is an enhanced version of SFISTA,
 it also uses as input a good guess  $ \mu_0$ for what is believed to be a  strong convex parameter of $\psis$  (even though such parameter may  not exist as $\psis$ is not assumed to be strongly convex).
In other words, {\ADAP} is used 
under the belief that $\psis$ is 
$\mu_0$-strongly convex. If a key test inequality within {\ADAP} fails to be satisfied then it stops without  finding a $(\sqrt{\sigma};x_0)$-relative  stationary solution of \eqref{ISO:problem}, but reaches the important conclusion  that $\psis$ is not $\mu_0$-strongly convex.





We are now ready to present the ADAP-FISTA algorithm below.

\noindent\begin{minipage}[t]{1\columnwidth}%
\rule[0.5ex]{1\columnwidth}{1pt}

\noindent \textbf{ADAP-FISTA Method}

\noindent \rule[0.5ex]{1\columnwidth}{1pt}%
\end{minipage}

\begin{itemize}[labelsep=0.5em, left=0pt, align=left, labelwidth=4em]
\item[{\bf Input:}] $(x_0, M_0, \mu_0, \sigma)\in \dom \psin\times \R_{++}\times \R_{++}\times \R_{++}$ such that $M_0>\mu_0$.

\item[{\bf 0.}] Let $\chi \in (0,1)$ and $\beta>1$ be given, and set $ y_0=x_0 $, $ A_0=0 $, $\tau_0=1$, and $ j=0$.

\item[{\bf 1.}] Set $M_{j+1}=M_j$.
\item[{\bf 2.}]	Compute
		\begin{equation*}\label{def:ak-sfista1}
		a_j=\frac{\tau_{j}+\sqrt{\tau_{j}^2+4\tau_{j} A_{j}(M_{j+1}-\mu_0)}}{2(M_{j+1}-\mu_0)}, \quad \tx_{j}=\frac{A_{j}y_{j}+a_j x_j}{A_j+a_j},
		\end{equation*}
		\begin{equation}
		y_{j+1}:=\underset{v\in \dom \psin}\argmin\left\lbrace q (v;\tx_{j},M_{j+1}) 
		:= \psis(\tilde x_j)+\langle \nabla \psis(\tilde x_j), v-\tilde x_j\rangle + \psin(v) + \frac{M_{j+1}}{2}\|v-\tx_{j}\|^2\right\rbrace.
		\label{eq:ynext-sfista1}
		\end{equation}
		If  the inequality
        \begin{equation}\label{ineq check}
		\psis(\tilde x_j)+\langle \nabla \psis(\tilde x_j), y_{j+1}-\tilde x_j\rangle+\frac{(1-\chi) M_{j+1}}{2}\|y_{j+1}-\tilde x_{j}\|^2\geq \psis(y_{j+1})
		\end{equation}
		holds, then go to step~3; else, set $M_{j+1} \leftarrow \beta M_{j+1} $ and repeat step~2.
\item[{\bf 3.}] Compute
\begin{align*}
A_{j+1}&=A_j+a_j, \quad \tau_{j+1}= \tau_j + a_j\mu_0,  \\
s_{j+1}&=(M_{j+1}-\mu_0)(\tilde x_j-y_{j+1}),\\
\quad x_{j+1}&= \frac{1}{\tau_{j+1}} \left[\mu_0 a_j y_{j+1} + \tau_j x_j-a_js_{j+1} \right].
\end{align*}
	
\item[{\bf 4.}]  If the inequality
\begin{equation}\label{ineq: ineq 1}
\|y_{j+1}-x_{0}\|^{2} \geq \chi A_{j+1}M_{j+1} \|y_{j+1}-\tilde x_{j}\|^2, 
\end{equation}
holds, then  go to step 5; otherwise, stop with {\bf failure}.

\item[{\bf 5.}] Compute 
\begin{equation*}\label{def:uk}
u_{j+1}=\nabla \psis(y_{j+1})-\nabla \psis(\tilde x_{j})+M_{j+1}(\tilde x_j-y_{j+1}).   
\end{equation*} If the inequality
		\begin{equation}\label{u sigma criteria}
		\|u_{j+1}\| \leq \sqrt{\sigma} \|y_{j+1}-x_0\|
		\end{equation}
		holds, then stop with {\bf success} and output $(y,u):=(y_{j+1},u_{j+1})$; otherwise,
		 $ j \leftarrow j+1 $ and go to step~1.
\end{itemize}
\noindent \rule[0.5ex]{1\columnwidth}{1pt}

We now make some remarks about ADAP-FISTA. 
First, 
steps 2 and 3 of ADAP-FISTA appear in the usual SFISTA for solving strongly convex version of \eqref{ISO:problem}, either
with a static Lipschitz constant
(i.e., $M_{j+1}=\tilde M$ for all $j \ge0$),
or with adaptive line search for $M_{j+1}$ (e.g., as in step 2 of ADAP-FISTA).
Second, the pair $(y_{j+1},u_{j+1})$ always satisfies the inclusion in \eqref{ISO:Cond:1&2} (see \cite[Lemma A.3]{sujanani2023adaptive}); hence, if ADAP-FISTA stops successfully in step 5, then
 the triple $(y_{j+1},u_{j+1},0)$ is a $(\sigma, x_0)$-relative  stationary solution of \eqref{ISO:problem},
 due to \eqref{u sigma criteria}.
Finally, if condition \eqref{ineq: ineq 1} in step 4 is never violated, then ADAP-FISTA must stop successfully in step 5 (see Proposition~\ref{pro.inexact.sol} below).

The following result describes the main properties of {\ADAP}.

\begin{prop}\label{pro.inexact.sol}
Assume that (B1) and (B2) hold and that
$\nabla \psis(\cdot)$ is $\tilde M$-Lipschitz 
continuous.
Then, the following statements about the {\ADAP} method 
with arbitrary input $(x_0, M_0, \mu_0, \sigma)\in \dom \psin\times \R_{++}\times \R_{++}\times \R_{++}$  
hold:
\begin{itemize}
    \item[(a)]  it
always stops (with either success or failure) in at most 
\begin{equation}\label{complex:Arnesh:Monteiro}
 {\cal O}\left( \sqrt{\frac{\tilde M +M_0}{ \mu_0}}\max\left\{\log_2(\sigma^{-1/2}\tilde M), 1\right\}
 \right) 
 \end{equation}
iterations/resolvent evaluations;
    \item[(b)] if it stops successfully with output $(y,u)$, then
    the triple $(y,u,0)$ is a $(\sqrt{\sigma};x_0)$-relative stationary solution of \eqref{ISO:problem} (see  Definition \ref{def:BB});
    \item[(c)] if $\psis(\cdot)$ is $ \mu_0$-strongly convex, then {\ADAP} always terminates successfully, and therefore
    with a $(\sqrt{\sigma};x_0)$-relative  stationary solution of \eqref{ISO:problem}.
\end{itemize}
\end{prop}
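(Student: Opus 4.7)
The plan is to outline how the three statements follow from the standard SFISTA analysis combined with the two safeguards built into {\ADAP}: the line search on $M_{j+1}$ in step~2, and the check \eqref{ineq: ineq 1} in step~4. Statement (b) is the easiest, so I would address it first. The first-order optimality condition for the strongly convex subproblem \eqref{eq:ynext-sfista1} yields
\[
0 \in \nabla \psi^{(\mathrm{s})}(\tilde x_j) + M_{j+1}(y_{j+1} - \tilde x_j) + \partial \psi^{(\mathrm{n})}(y_{j+1}),
\]
and rearranging while adding $\nabla \psi^{(\mathrm{s})}(y_{j+1})$ to both sides shows that $u_{j+1} \in \nabla \psi^{(\mathrm{s})}(y_{j+1}) + \partial \psi^{(\mathrm{n})}(y_{j+1})$. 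Combined with \eqref{u sigma criteria}, this yields that $(y_{j+1}, u_{j+1}, 0)$ satisfies Definition~\ref{def:BB} with parameters $(\sqrt{\sigma}, x_0)$.

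For statement (a), I would decompose the total work into inner line-search sub-iterations inside step~2 and outer iterations indexed by $j$. Using the descent lemma and \eqref{ineq:uppercurvature1}, the test \eqref{ineq check} is automatically satisfied once $(1-\chi) M_{j+1} \ge \tilde M$; hence $M_{j+1}$ never exceeds $M_{\max} := \max\{M_0, \beta \tilde M/(1-\chi)\}$ and a telescoping argument bounds the total number of line-search doublings by $j + O(\log_\beta^+(\tilde M / M_0))$. For the outer loop, I would invoke the standard SFISTA recursion on $(A_j, \tau_j)$ to obtain the geometric growth estimate $A_j = \Omega\bigl(M_{\max}^{-1}(1+\sqrt{\mu_0/M_{\max}})^{2(j-1)}\bigr)$. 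Whenever \eqref{ineq: ineq 1} still holds, the definition of $u_{j+1}$ together with \eqref{ineq:uppercurvature1} gives
\[
\|u_{j+1}\| \le (\tilde M + M_{j+1})\|y_{j+1}-\tilde x_j\| \le \frac{\tilde M + M_{j+1}}{\sqrt{\chi A_{j+1} M_{j+1}}}\,\|y_{j+1}-x_0\|.
\]
Forcing the right-hand prefactor below $\sqrt{\sigma}$ and inverting the geometric bound on $A_{j+1}$ with $M_{j+1} \le M_{\max} = O(\tilde M + M_0)$ yields exactly the bound \eqref{complex:Arnesh:Monteiro}, independent of whether termination occurs via \eqref{u sigma criteria} or failure of \eqref{ineq: ineq 1}.

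For statement (c), I would exploit the fact that, when $\psi^{(\mathrm{s})}$ is truly $\mu_0$-strongly convex, the SFISTA potential-function analysis yields the classical inequality
\[
\frac{\tau_{j+1}}{2}\|x_{j+1}-u\|^2 + A_{j+1}[\psi(y_{j+1})-\psi(u)] + \sum_{i=0}^{j}\frac{\chi A_{i+1}M_{i+1}}{2}\|y_{i+1}-\tilde x_i\|^2 \le \frac{1}{2}\|x_0-u\|^2
\]
for every $u \in \dom\psi^{(\mathrm{n})}$. Specializing to $u = y_{j+1}$, discarding all nonnegative terms except the $j$-th summand, and using $\psi(y_{j+1})-\psi(y_{j+1}) = 0$ reproduces exactly \eqref{ineq: ineq 1}. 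Hence the step~4 safeguard never triggers, so {\ADAP} cannot halt with failure and must terminate successfully within the iteration budget from~(a).

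The hard part will be deriving the SFISTA-style potential decrease underlying (c), with careful bookkeeping of the error terms introduced by the adaptive stepsize $M_{j+1}$ (which varies across iterations and couples the inner and outer loops) and by the fact that, outside the hypothesis of (c), $\mu_0$ is only a putative strong-convexity parameter. It is precisely this inequality that upgrades \eqref{ineq: ineq 1} from a heuristic safeguard into a property guaranteed to hold in case~(c), and that ties the outer-iteration count to the accelerated rate encoded in \eqref{complex:Arnesh:Monteiro}.
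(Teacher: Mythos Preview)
The paper does not supply a proof of this proposition: Appendix~\ref{sec:acg} describes {\ADAP} and then states Proposition~\ref{pro.inexact.sol} as a summary of results imported from \cite{sujanani2023adaptive} (the remark preceding the algorithm already attributes the inclusion underlying~(b) to \cite[Lemma~A.3]{sujanani2023adaptive}), followed only by a few informal remarks. So there is no in-paper argument to compare against.

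That said, your sketch is the standard one and matches in substance what the cited source does: for~(b), the optimality condition of \eqref{eq:ynext-sfista1} plus the stopping test \eqref{u sigma criteria}; for~(a), the uniform bound $M_{j+1}\le\max\{M_0,\beta\tilde M/(1-\chi)\}$ from the line search, the SFISTA geometric growth of $A_j$, and the estimate $\|u_{j+1}\|\le(\tilde M+M_{j+1})\|y_{j+1}-\tilde x_j\|$ combined with \eqref{ineq: ineq 1}; for~(c), the Lyapunov/potential inequality evaluated at $u=y_{j+1}$, which forces \eqref{ineq: ineq 1} to hold at every step. The ``hard part'' you flag---carrying the variable $M_{j+1}$ through the potential decrease---is handled in the reference precisely because the line-search test \eqref{ineq check} supplies, at each step, the upper quadratic model with the current $M_{j+1}$ that the recursion needs, so the adaptivity enters cleanly.
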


We now make some remarks about Proposition~\ref{pro.inexact.sol}.
First, if {\ADAP} fails then it follows from Proposition~\ref{pro.inexact.sol}(c) that $\psis$ is not $\tilde \mu$-strongly convex.
Hence, failure of the method sends the message that $\psis$ is not ``desirable", i.e., is far from being $\tilde \mu$-strongly convex.
Second, if {\ADAP} successfully terminates (which can happen even if $\psis$ is not $\tilde \mu$-strongly convex), then Proposition~\ref{pro.inexact.sol}(b) guarantees that it finds the desired relative  stationary solution.
Third, if
    $\sigma^{-1/2} = {\cal O}(1)$ and
$M_0 = {\cal O}(\tilde M)$, then \eqref{complex:Arnesh:Monteiro} reduces to
$ {\cal O}((\tilde M/\mu_0)^{1/2})$.

\section{Inexact Solution Concept}\label{Appendix:inexact.sol}

This section shows how  near-stationary solutions, which are  absolute analogues of the ones considered in Definition~\ref{def:BB}, yield points with nearly nonnegative 
directional derivatives along all unit directions.
For a given function $\phi$ and $y \in \dom \phi$ such that the directional derivative $\phi'(y;u-y)$ is well-defined for every $u \in \R^n$, define
\begin{align}\label{def:inf.derivative}
\Theta(y;\phi) := - \inf_{u\in\R^n}\{ \phi'(y;u-y) \, : \, \|u-y\|\le 1\}.
\end{align}

Clearly, $\Theta(y;\phi) \ge 0$ and equality holds if and only if
$\phi'(y;u-y) \ge 0$ for every $u \in \R^n$. If $\phi \in \bConv{n}$, the latter condition is equivalent to $y$ being an optimal solution of $\phi$, or equivalently, the inclusion
$0 \in \partial \phi(y)$. More generally, a point $y$ such that $\Theta(y;\phi) = 0$ is referred to as a stationary point of $\phi$.

More generally, $y$ is called a directional near-stationary point when $\Theta(y;\phi)$ is near zero.
This section discusses how an absolute analogue of the stationary condition in Definition~\ref{def:BB} yields directional near-stationary points
and related ones expressed in terms of subdifferentials.

The main result of this section is Proposition~\ref{Appendix.Prop.C.main}, stated in the setting of a nonconvex composite optimization problem.
Its proof requires two preliminary technical lemmas.

\begin{lemma}
\label{prop:eps_to_stationarity}
Let $\lam>0$, $\varepsilon \ge 0$, 
function
$\phi\in \bConv{n}$,
and $x\in \dom \phi$ such that $0\in \partial_\varepsilon\phi(x)$ be given,
and let  $y$ denote the unique optimal solution of the strongly convex optimization problem
\begin{align}\label{eq.lemma.C1.optm}
y := \arg\min_{u \in \mathbb{R}^n}
\left\{ \phi_\lam (u) := \phi(u) + \frac{1}{2\lambda}\|u-x\|^2 \right\}.
\end{align}
Then,  $y \in \mathbb{R}^n$ satisfies $\|y-x\| \le \sqrt{\varepsilon\lam}$.
\end{lemma}

\begin{proof}
Let $\lambda>0,$ $\varepsilon\ge 0$, and $x\in \dom \phi$ such that $0\in \partial_\varepsilon\phi(x)$ be given. Using the definition of $y$ and the fact that $ \phi_\lam$ is $(1/\lam)$-strongly convex, we have
 $\phi_\lam(u) \ge \phi_\lam(y) + \|u-y\|^2/(2\lambda)$ for every $u \in \R^n$, or equivalently,

\[
\phi(u) + \frac{1}{2\lambda}\|u-x\|^2
\ge
\phi(y) + \frac{1}{2\lambda}\|y-x\|^2
+ \frac{1}{2\lambda}\|u-y\|^2 \quad \forall u\in\R^n.
\]
The above inequality with $u=x$  implies that
$\|y-x\|^2/\lam
\le \phi(x) - \phi(y)
\le \varepsilon$,
where the last inequality is due to the assumption that $0\in \partial_\varepsilon\phi(x)$ and the definition of the $\varepsilon$-subdifferential in \eqref{e-subd}. 
Thus, the conclusion of the lemma holds.
\end{proof}

The proof of the next well-known result can be found
for example in \cite[Lemma F.1.2]{kong2021thesis}
(see also Chapter 8 of \cite{RockafellarWets2009} and \cite{BurkeMore1988}).
\begin{lemma}\label{lemma:C:2}
Let $\psin\in\bConv{n}$, (possibly nonconvex) differentiable function $\psis$ on $\dom \psin$,
and $(y,w) \in \dom \psin\times \R^n$ such that $w\in \nabla \psis(y) + \partial \psin(y)$ be given. Then,
\begin{align*}
\Theta(y;f+h) = \dist\left(0, \nabla \psis(y) + \partial \psin(y) \right)\leq \|w\|,
\end{align*}
where $\Theta(y;f+h)$ is as in~\eqref{def:inf.derivative}.
\end{lemma}

We are now ready to state the main result of this subsection.

\begin{proposition}\label{Appendix.Prop.C.main}
Let $\psin\in\bConv{n}$, (possibly nonconvex) differentiable function $\psis$ on $\dom \psin$,
and $(x,r,\varepsilon) \in \dom \psin\times \R^n\times \R_+$ such that
\begin{align}\label{eq.lemma.composite.proof1}
r \in \nabla \psis(x) + \partial_\varepsilon \psin(x),  
\end{align}
be given, and define
\begin{align}
y=y(x,r) &:= \arg\min_{u \in \mathbb{R}^n}
\left\{ \inner{\nabla \psis(x)-r}{u } + \psin(u) + \frac{1}{2}\|u-x\|^2 \right\}, \label{eq.def.C.y}\\
w=w(x,r) &:= x-y + r + [ \nabla \psis(y)-\nabla \psis(x)]. \label{eq.def.C.w}
\end{align}
Then, the following statements hold:
\begin{itemize}
\item[a)] 
$(y,w) =(y(x,r),w(x,r)) \in \dom \psin \times \R^n$ satisfies
\begin{align}
\label{eq.lemma.composite.proof20}
w\in \nabla \psis(y) + \partial \psin(y), \qquad \|y-x\|\leq \sqrt{\varepsilon},
\end{align}
and the inequality
\begin{align}
\label{eq.lemma.composite.proof2}
\|w\| \leq  \|\nabla \psis(y)-\nabla \psis(x)\| + \sqrt{\varepsilon} + \|r\|.
\end{align}
\item[b)]
if, in addition,  $\dom \psin$ is compact and $\nabla \psis$ is continuous on $\dom \psin$, then
for any $\eta>0$, there exists $\delta>0$ satisfying the following property:
the pair  $(y,w)=(y(x,r),w(x,r))$ associated with
any $(x,r,\varepsilon)$
such that inclusion \eqref{eq.lemma.composite.proof1}
and the inequality $\|r\|^2 + 2 \varepsilon \le \delta^2/2$ holds, 
  satisfies
\begin{align}\label{eq.lemma.Append.C.b}
\|y-x\| \le \eta , \qquad   \|w\| \leq  \eta;
\end{align}
as a consequence,
\begin{align}\label{eq.lemma.Append.C.b.2}
\Theta(y;\psis+\psin)
= \dist\left(0, \nabla \psis(y) + \partial \psin(y) \right) \leq  \eta.
\end{align}
\end{itemize}
\end{proposition}

\begin{proof}
(a) The inclusion in~\eqref{eq.lemma.composite.proof20} follows from the fact that $y$ satisfies the optimality condition for \eqref{eq.def.C.y} and
the definition of $w$ in \eqref{eq.def.C.w}.
Now, define
\[
\phi(\cdot)
:=  \inner{\nabla \psis(x)-r}{\, \cdot } + \psin(\cdot)
\]
and note that inclusion in~\eqref{eq.lemma.composite.proof1} implies that $0 \in \partial_\varepsilon \phi(x)$. Proposition~\ref{prop:eps_to_stationarity} with $\lambda=1$ and the definition of $y$ in~\eqref{eq.def.C.y} then imply that the inequality in \eqref{eq.lemma.composite.proof20} holds.
Now, the inclusion in \eqref{eq.lemma.composite.proof20} and Lemma \ref{lemma:C:2} imply that the first inequality in \eqref{eq.lemma.composite.proof2} holds. 
Finally, the definition of $w$, the triangular inequality, and the inequality in \eqref{eq.lemma.composite.proof20}, imply that the second inequality in \eqref{eq.lemma.composite.proof2} also holds.
We have proved that (a) holds.

(b) Let $\eta>0$ be given. The additional assumptions made on this statement imply that $\nabla \psis$ is uniformly continuous on $\dom \psin$. Thus, there exists $\rho>0$ such that
\begin{align}\label{eq.AppenC.HC}
\|z-x\| \le \rho \Rightarrow \|\nabla \psis(x)-\nabla \psis(z)\| \le \frac{\eta}2.
\end{align}
We now show that the scalar $\delta := \min\{2\rho,\eta/2\}$ fulfills the conclusion of this statement. Indeed, let triple $(x,r,\varepsilon)$ satisfying inclusion \eqref{eq.lemma.composite.proof1}
and the inequality $\|r\|^2 + 2 \varepsilon \le \delta^2/2$ be given. Clearly, the last inequality, together with the definition of $\delta$ and the inequality in~\eqref{eq.lemma.composite.proof20}, implies that
\[
\|r\| + \sqrt{\varepsilon} \le \left( 2 \|r\|^2 + 2 \varepsilon \right)^{1/2} \le  \delta \le  \frac{\eta}2,
\qquad
\|y-x\| \le \sqrt{\varepsilon} \le \sqrt{\frac{\delta^2}4} = \frac{\delta}2 \le \min\{\rho,\eta\}
\]
These two inequalities,
the inequalities in \eqref{eq.lemma.composite.proof2},
and implication \eqref{eq.AppenC.HC} with $z=y$, then show that \eqref{eq.lemma.Append.C.b} holds. Results in~\eqref{eq.lemma.Append.C.b.2} follow by Lemma~\ref{lemma:C:2} and the last inequality in~\eqref{eq.lemma.Append.C.b}.
\end{proof}
\vspace{.5em}

Note that if the function $\psis$ in Proposition~\ref{Appendix.Prop.C.main} is also $L$-smooth on $\dom \psin$, then it follows from~\eqref{eq.lemma.composite.proof2} that
\begin{align*}
 \|w\| 
&\leq \sqrt{\varepsilon}(L+1) + \|r\|.
\end{align*}
Hence, without the compactness assumption on $\dom \psin$, it can be easily seen that the  conclusion of statement (b) holds if
$\delta$ is chosen as $\delta := \eta/[2(L+1)]$.

Finally, the usual subdifferential for  convex functions has been generalized to
functions of the form $\psis+\psin$ where $(\psis,\psin)$ are as in Lemma~\ref{lemma:C:2} or Proposition~\ref{Appendix.Prop.C.main} (e.g., see \cite[Ch. 10]{rockafellar1998variational},
\cite{mordukhovich2006variational}). 
This more general subdifferential, denoted by $\tilde \partial (\psis+\psin)$, has the property that
\[
\tilde \partial (\psis+\psin)(y)= \nabla \psis(y) + \partial \psin(y) \quad \forall y \in \dom \psin.
\]
Hence, the above inequalities involving the quantity
$\dist\left(0, \nabla \psis(y) + \partial \psin(y) \right)$ can equivalently be rewritten in terms of
$\dist (0, \tilde \partial(\psis + \psin)(y))$.

\bibliography{relax_admm_ref}

\end{document}